\definecolor{darkblue}{rgb}{0,0,0.5}
\titleformat*{\section}{\large\bfseries}
\titleformat*{\subsection}{\normalsize\bfseries}
\titleformat*{\subsubsection}{\normalsize\bfseries}
\newcommand{\prn}[1]{\left(#1\right)}
\newtheorem{remark}{Remark}
\newtheorem{lemma}{Lemma}
\newtheorem{property}{Property}
\newtheorem{definition}{Definition}
\numberwithin{equation}{section}
\numberwithin{remark}{section}
\numberwithin{definition}{section}
\numberwithin{lemma}{section}
\numberwithin{property}{section}
\renewcommand{\O}{\mathcal{O}}
\newcommand{\sgn}{\operatorname{sgn}}
\begin{document}


\begin{center}
\textbf{\large Relations between WENO3 and Third-order Limiting in Finite Volume Methods}
\end{center}



\begin{center}
B. Schmidtmann$^{a, *}$, B. Seibold$^b$, M. Torrilhon$^a$\\
{\small{${}^a$ \emph{Center for Computational Engineering Science, RWTH Aachen University, Schinkelstr. 2, 52062 Aachen, Germany}}}\\
\small{${}^b$\emph{Department of Mathematics, Temple University, 1801 North Broad Street, Philadelphia, PA 19122, USA}} \\
{\ } \\
The final publication is available at link.springer.com 
\end{center}

\bigskip

	\begin{abstract}
		\noindent Weighted essentially non-oscillatory (WENO) and finite volume (FV) methods employ different philosophies in their way to perform limiting. We show that a generalized view on limiter functions, which considers a two-dimensional, rather than a one-dimensional dependence on the slopes in neighboring cells, allows to write WENO3 and $3^\text{rd}$-order FV schemes in the same fashion. Within 
this framework, it becomes apparent that the classical approach of FV limiters to only consider ratios of the slopes in neighboring cells, is overly restrictive. The hope of this new perspective is to establish new connections between WENO3 and FV limiter functions, which may give rise to improvements for the limiting behavior in both approaches.
	\end{abstract}
\section{Introduction}\label{sec:introduction}
%
In the finite volume (FV) approach for the numerical solution of hyperbolic conservation laws, flux evaluations at the cell interfaces are needed. Since the exact values of the fluxes are unknown at the interfaces, the true flux function is approximated by a numerical flux function. Following Godunov \cite{Godunov1959}, this numerical flux function is typically evaluated by solving a local Riemann problem at each interface, taking as input the left- and right-sided limit of a reconstruction function. If this reconstruction returns simply the adjacent cell mean values, this leads to a first order scheme. In order to obtain higher order schemes one can define higher-order reconstruction functions. For instance, one can define various types of reconstructions based on only three input values, namely the mean values of the cell of interest and its direct neighbors. Evaluating these polynomials at the cell interfaces yields input values for the numerical flux function which result in a higher-order accurate scheme. The best order of accuracy which can be obtained with only three input values is a quadratic polynomial, resulting in a $3^\text{rd}$-order-accurate linear scheme. 
Godunov's Theorem \cite{Godunov1959} implies that on fixed grids, the high-order approximation of hyperbolic conservation laws with linear schemes is impossible without creating spurious oscillations. One way to work around this is to limit the order of reconstruction at discontinuous parts of the solution. A large variety of these non-linear limiters has been developed to achieve high-order accuracy without creating oscillations \cite{HartenEngquistOsherChakravarthy1987, LiuOsherChan1994, JiangShu1996, ArtebrantSchroll2005}. 

The first limiter functions were van Leer's limiter \cite{VanLeer1974} and Roe's superbee limiter \cite{Roe1985}. These limiter functions yield second order accurate reconstructions in smooth parts of the solution and lie in the second order total variation diminishing (TVD) region of Sweby \cite{LeVeque1992}. However, due to the TVD property they reduce to $1^\text{st}$-order at smooth extrema \cite{LeVeque1992}. These classical approaches based on three-cell reconstructions, result in $2^{\text{nd}}$-order accuracy \cite{VanLeer1979, LeVeque2002}. For smooth solutions however, it is in fact possible to obtain update rules which yield $3^{\text{rd}}$-order accuracy. As described above, $3^{\text{rd}}$-order accuracy is the optimum that can be achieved with only using immediate neighborhood information and if only cell averages are stored. Higher orders of accuracy can be achieved only via wider stencils, or by storing additional information per cell, as employed for instance in discontinuous Galerkin methods \cite{ReedHill1973, CockburnShu1988} or jet schemes \cite{ChidyagwaiNaveRosalesSeibold2012}. \\
Woodward and Colella \cite{ColellaWoodward1984} proposed a $3^\text{rd}$-order reconstruction based on a four-point centered stencil. They compute the interface value, and then limit the reconstruction at shocks. Suresh and Huynh \cite{SureshHuynh1997} go beyond this and use a five-point stencil for higher-order reconstruction. The interface values are obtained by limiting a higher-order polynomial reconstruction, however, their limiting procedure is costly to apply. Another possibility is the use of non-polynomial reconstructions, cf. Marquina \cite{Marquina1994}, who introduced hyperbolic reconstruction schemes, and Harten, Engquist, Osher, and Chakravarthy \cite{HartenEngquistOsherChakravarthy1987}, who present essentially non-oscillatory (ENO) schemes. The idea behind ENO schemes is to divide the stencil of interest in smaller sub-stencils with piecewise polynomial reconstructions on each sub-stencil. The scheme then applies an optimal stencil selection procedure, which chooses the locally smoothest stencil for the reconstruction. The scheme avoids interpolation across discontinuities, however, the final approximation does not contain all available data. To overcome this drawback, weighted essentially non-oscillatory (WENO) schemes were developed by Liu, Osher, and Chan \cite{LiuOsherChan1994}. Weighted ENO schemes use a convex combination of all candidate stencils to obtain the reconstruction of the interface values. Later, Jiang and Shu \cite{JiangShu1996} further modified and improved the WENO scheme by proposing a new way of measuring the smoothness of a numerical solution and thereby increasing the order of accuracy. 

Artebrant and Schroll \cite{ArtebrantSchroll2005} developed a local double-logarithmic reconstruction which only needs three input values. Based on their work, {\v{C}}ada and Torrilhon \cite{CadaTorrilhon2009} introduced a limiter function, which yields $3^\text{rd}$ order accuracy in smooth parts of the solution and at extrema. The authors also introduce a criterion to distinguish between smooth extrema and discontinuities in order to avoid extrema clipping. This work has been further investigated in \cite{SchmidtmannHYP2014},in order to find a parameter-free smoothness criterion. 

This paper continues their work on determining a parameter-free smoothness criterion and incorporating it in the reconstruction procedure. We present a new $3^\text{rd}$-order limiter function in the FV setting and then compare it to different $3^\text{rd}$-order WENO schemes with the goal that this contributes to a better understanding of the nature of non-linear schemes. 

The time discretization of all schemes is implemented using the $3^\text{rd}$-order TVD Runge-Kutta method developed by Shu and Osher \cite{ShuOsher1988}.
	\newline	\newline
	The paper is structured as follows. In Section \ref{sec:basicFormulation} we review the basic formulation of the problems in question and interface reconstruction in general. In Section \ref{sec:LimO3}, we discuss FV reconstruction methods containing limiter functions and we provide a new smoothness indicator. Section \ref{sec:WENO} recalls the WENO scheme as first introduced by Liu et al. \cite{LiuOsherChan1994} and later improved by Jiang and Shu \cite{JiangShu1996}. We focus on WENO schemes which only need three points for each reconstruction, as from now called WENO3. Section \ref{sec:unifying} places the introduced methods into a unifying setting and compares the novel limiter function with different WENO schemes. Finally, Section \ref{sec:results} presents some numerical results which demonstrate the potential of the proposed schemes and conclusions are given in Section \ref{sec:conclusions}.
%
\section{Basic Formulation}\label{sec:basicFormulation}
	We are interested in the numerical approximation of a Cauchy Problem of the hyperbolic conservation law
	\begin{subequations}
		\label{eq:conservation_law}
		\begin{align}
		\label{eq:conservation_law_main}
		\partial_t u(x,t)+\partial_x f(u(x,t)) &= 0, \\
		u(x,0)&=u_0(x),\;\; x\in \mathbb{R}
		\end{align}
	\end{subequations}	
	in one space dimension, equipped with the initial condition $u_0(x)$, where $u  = (u_1, \ldots, u_s )^T$  and the Jacobian matrix $A(u) =\frac{d f}{d u}$ has $s$ real eigenvalues. For the sake of simplicity, we restrict our discussion and analysis to the scalar case $s = 1$. However, the developed ideas are also applicable to systems ($s>1$), in the same way other limiters extend from $s=1$ to $s>1$. We consider a regular grid in space, with the positions of the cell centers denoted by $x_i,\;i\in\mathbb{Z}$ and with uniform space intervals of size $ \Delta x$. The grid cells are defined by $C_i = [x_{i-1/2},\; x_{i+1/2}]$, where $x_{i\pm j} = x_i \pm j\Delta x$.
Finite volume (FV) methods aim at approximating the cell averages of the true solution of \eqref{eq:conservation_law} with high accuracy, see e.g. \cite{LeVeque2002}. The cell average of the true solution $u(\cdot, \cdot)$ is given by 
	\begin{align}
		\label{eq:exSol}
		\bar U(x, t) = \frac{1}{\Delta x} \int_{x-\Delta x/2}^{x+\Delta x/2} u(s,t) ds.
	\end{align}
	With this definition we call $\bar{U}_i^n = \bar U(x_i, t^n)$ the cell average of the true solution $u$ in cell $C_i$ at time $t^n$. The goal is to find an update rule to advance approximate cell averages from a given time $t^n$ to a new time $t^{n+1} = t^n+\Delta t$, such that the true cell averages are approximated with high order of accuracy. In addition, the approximate solution should not develop any (relevant) spurious oscillations.
Integrating Eq.~\eqref{eq:conservation_law_main} over the cell $C_i$ and dividing by $\Delta x$ yields
	\begin{align}
		\label{eq:approxConsLaw}
		\frac{d \bar U_i}{d t} = -\frac{1}{\Delta x} \left( f(u(x_{i + 1/2},t)) - f(u(x_{i - 1/2},t))\right)
	\end{align}
	which is still exact. We now want to find a solution approximation $\bar u_i^n$ satisfying $\bar u_i^n \approx \bar U_i^n$. The quality of the approximation $\bar{u}_i$ depends on the accurate approximation of the fluxes at the cell boundaries $f(u(x_{i \pm 1/2},t))$.
This is achieved by constructing a numerical flux function $\hat{f}(u,v)$ that is Lipschitz continuous and consistent with the (true) flux function, i.e. $\hat{f}(u,u)=f(u)$. The numerical fluxes at the boundaries of cell $C_i$ are then given by 
	\begin{subequations}
		\label{eq:approxFlux}
		\begin{align}
			\hat{f}_{i+1/2}=\hat{f}(\hat u^{(-)}_{i+1/2},\hat u^{(+)}_{i+1/2}),\\
			\hat{f}_{i-1/2}=\hat{f}(\hat u^{(-)}_{i-1/2},\hat u^{(+)}_{i-1/2}),
		\end{align}
	\end{subequations}	
 	where $\hat u^{(-)}_{i\pm 1/2}$ and $\hat u^{(+)}_{i\pm 1/2}$ are approximations to the solution values at the cell boundary $x_{i\pm 1/2}$, left-sided (${}^{(-)}$) and right-sided (${}^{(+)}$), respectively, see Fig.~\ref{fig:reconstruction}. The evolution of cell averages is thus given by
	\begin{align}
		\label{eq:approxConsLawApprox}
		\frac{d \bar u_i}{d t} = -\frac{1}{\Delta x} \left( \hat{f}_{i+1/2} - \hat{f}_{i-1/2}\right).
	\end{align}
 	 The accurate reconstruction of these left and right interface values at the cell boundaries $x_{i\pm 1/2}$, see Fig.~\ref{fig:reconstruction}, is the crucial point in this process. We are particularly interested in numerical schemes that find the approximate solution values $\hat u^{(+)}_{i-1/2}$ and $\hat u^{(-)}_{i+1/2}$ that correspond to cell $C_i$ by using only information of the cell $C_i$ and its immediate neighbor cells $C_{i-1}$ and $C_{i+1}$. The restriction to immediate neighbor cells provides local update rules. This locality is, among other advantages, beneficial for low-communication parallelization, and ensures that few ghost cells must be provided near boundaries. 
 	 
The key ingredient to getting $3^{\text{rd}}$-order accuracy is the way of reconstructing function values at cell boundaries $x_{i\pm\frac{1}{2}}$ based on cell averages. The reconstructed values $\hat u_{i\pm\frac{1}{2}}$ are then provided as input values for the numerical flux function $\hat{f}(\cdot, \cdot)$ in Eq.~\eqref{eq:approxConsLawApprox}, following the standard FV methodology \cite{LeVeque1992}.
The focus of this paper is solely on the actual reconstruction of the solution $\hat u_{i\pm\frac{1}{2}}$ at the cell interfaces. For the sake of simplicity, we shall drop the $\;\hat{}\;$.
	\begin{figure}
	\centering
		\begin{subfigure}{.49\textwidth}
			\includegraphics[width=0.9\textwidth]{./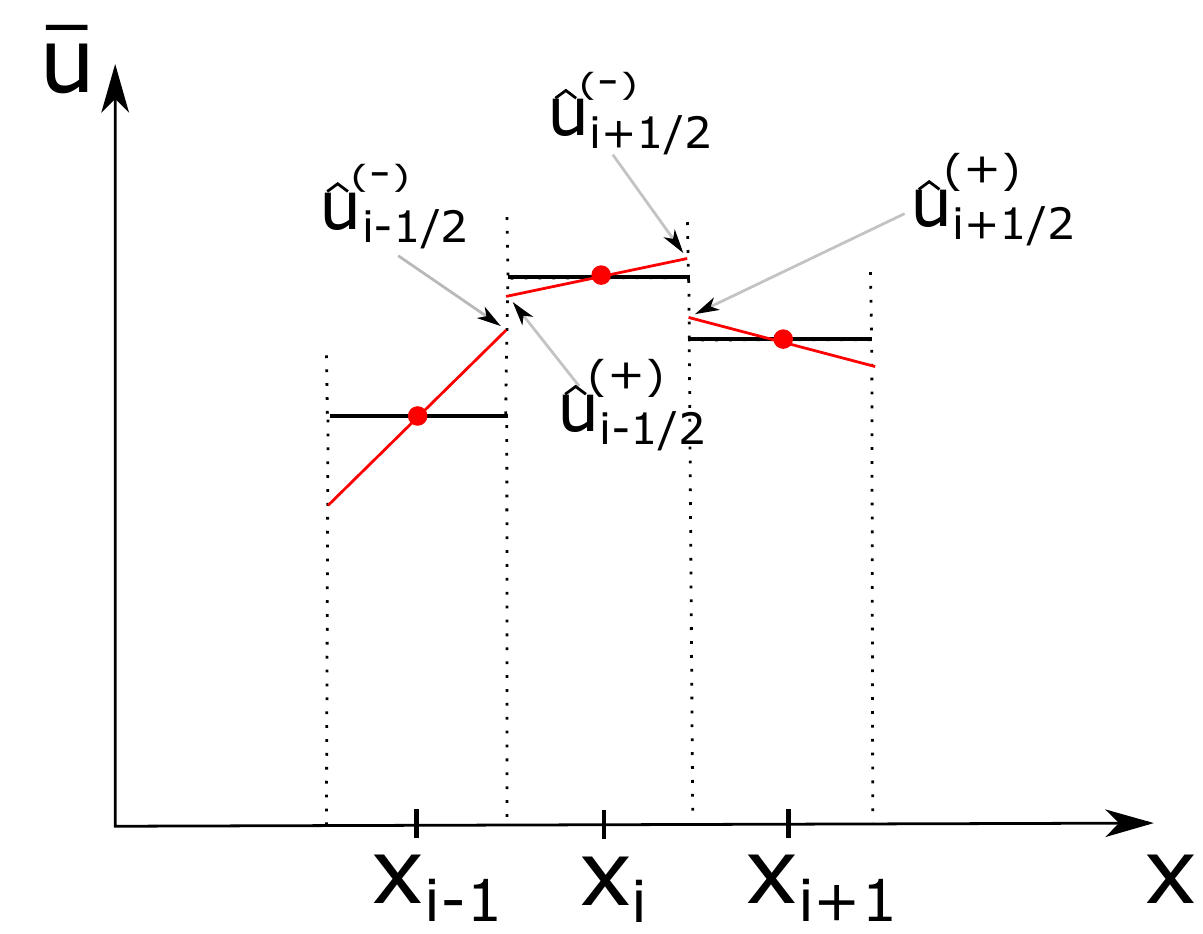}
			\caption{From the three cell averages (black horizontal lines), functions are reconstructed (red lines, a linear function in this case), these functions are then evaluated at the left- and right-sided cell boundaries.}
		\end{subfigure}
		\begin{subfigure}{.49\textwidth}
			\includegraphics[width=0.9\textwidth]{./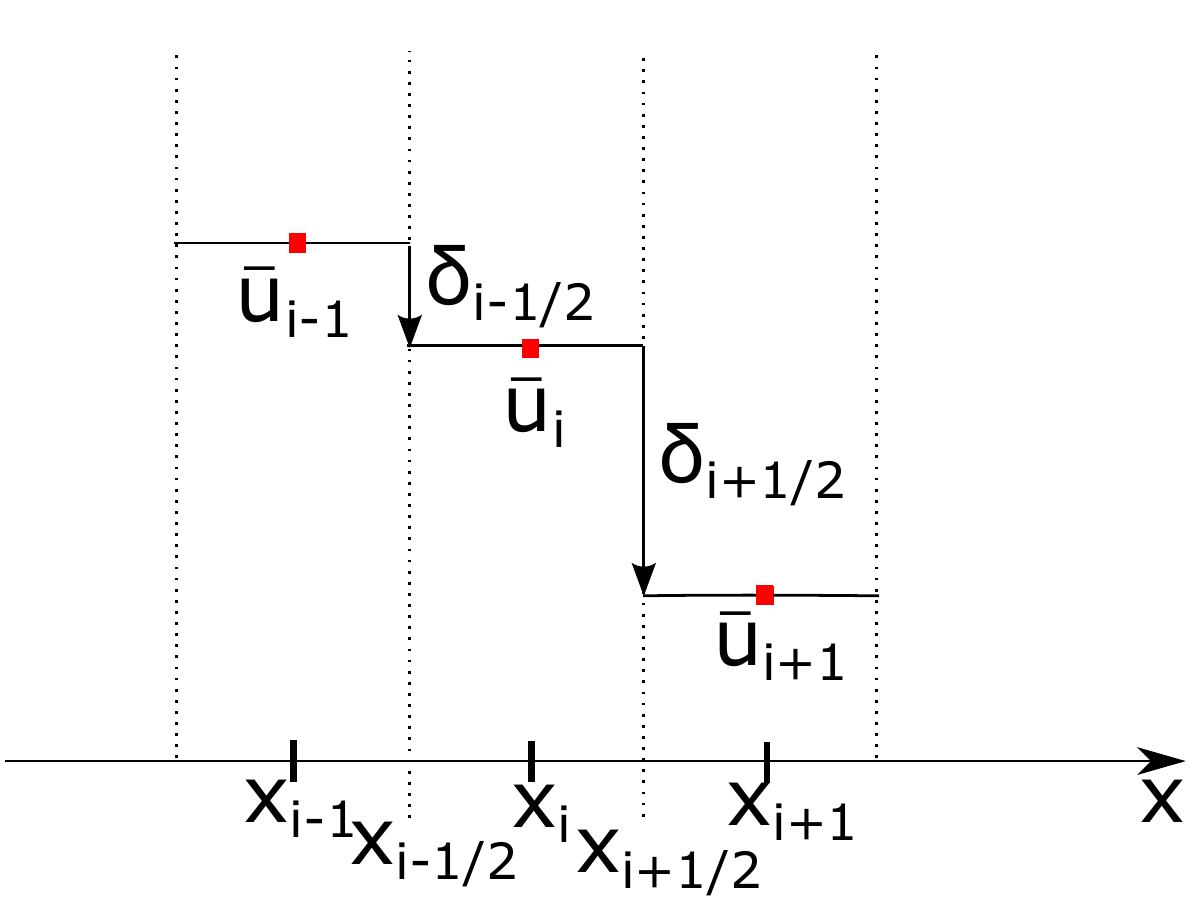}
			\caption{The normalized gradients are computed as the differences between neighboring cell averages, $\delta_{i+\frac{1}{2}}=\bar u_{i+1}-\bar u_i$ and $\delta_{i-\frac{1}{2}}=\bar u_i-\bar u_{i-1}$, respectively.}
		\end{subfigure}
		\caption{Basic setting for the reconstruction of the interface values $u^{(\pm)}(x_{i\pm 1/2})$ on a 3-point-stencil.}
	\label{fig:reconstruction}
	\end{figure}
	
Since we consider three cells for the reconstruction, we define the left and right interface values to be determined by functions $L$ and $R$:
	\begin{subequations}
		\label{eq:general_interface_values}
		\begin{align}			
			u_{i+\frac{1}{2}}^{(-)} &= L(\bar u_{i-1},\bar u_{i},\bar u_{i+1})\\
			u_{i-\frac{1}{2}}^{(+)} &= R(\bar u_{i-1},\bar u_{i},\bar u_{i+1}).
		\end{align}
	\end{subequations}	
	%
	%
\section{Finite Volume $3^\text{rd}$ order Limiting}\label{sec:LimO3}
%
In this section, we review limiting in FV methods using the three-point stencil for the reconstruction of $\hat u^{(+)}_{i\pm 1/2}$ and $\hat u^{(-)}_{i\pm 1/2}$. For the computation of the numerical flux functions, the approximate solution at the cell interfaces $x_{i\pm 1/2}$ is needed. For the cell $x_i$, we define the left and right interface values by
	\begin{subequations}
    	\label{eq:reconstructionNonStandard}
		\begin{align}
	 	  	u^{(-)}_{i+1/2} &= \bar u_i + \frac{1}{2}\; \phi(\theta_i)\delta_{i+\frac{1}{2}}, \\
  		  	u^{(+)}_{i-1/2} &= \bar u_i - \frac{1}{2}\; \phi(\theta^{-1}_i)\delta_{i-\frac{1}{2}},
		\end{align}
 	\end{subequations}
	respectively.
	Here, $\phi$ is a univariate, non-linear limiter function depending on the local smoothness measure
	\begin{subequations}
		\begin{align}
			\label{eq:theta}
			&\theta_i = \frac{\delta_{i-\frac{1}{2}}}{\delta_{i+\frac{1}{2}}},\\
			\text{where}\;\;&\delta_{i+\frac{1}{2}}=\bar u_{i+1}-\bar u_i,\label{eq:deltaP}\\
			\text{and}\;\;&\delta_{i-\frac{1}{2}} =\bar u_i-\bar u_{i-1}\label{eq:deltaM}
		\end{align}
	\end{subequations}	
	are the differences between neighboring cell averages, cf. Fig.~\ref{fig:reconstruction}. Limiter functions switch the reconstruction to high-order accuracy in smooth parts of the solution and to lower-order reconstructions near discontinuities \cite{LeVeque1992}.
This means, limiter functions contain some sort of smoothness measure. In Eq.~\eqref{eq:reconstructionNonStandard}, the choice of $\phi(\theta_i)$ determines the order of accuracy of the reconstruction and therefore of the resulting scheme.
	\begin{property}\label{prop:generalProperties} \textbf{General Properties of Limiter Functions}\\
		(i) If $\phi$ passes continuously through $\theta = 1$ with $\phi(1) = 1$, then the resulting scheme is at least $2^\text{nd}$ order accurate in sufficiently smooth, monotonous regions of the solution.\\
		(ii) If $\phi$ satisfies the conditions
		\begin{equation}
			\label{eq:strict_tvd_bounds}
			0\le\phi(\theta)\le \max(0,\min(2,2\theta))\;,
		\end{equation}
		the numerical scheme is total variation diminishing (TVD), and thus does not create spurious oscillations.\\
	\end{property}	
	\begin{proof}
		See e.g. \cite{LeVeque2002}.
	\end{proof}
	There is a variety of schemes which reconstruct based on three points and obtain $2^{\text{nd}}$-order accuracy. These are the classical schemes in the Sweby setting, such as Superbee, van Leer and others \cite{LeVeque1992}. These schemes use the information of the three cells to compute a linear reconstruction function, see e.g. \cite{VanLeer1979}. Indeed, the full $2^\text{nd}$-order reconstruction $u^{(-)}_{i+1/2}=\bar u_i + \frac{\Delta x}{2}	\left(\frac{\bar u_{i+1}-\bar u_{i-1}}{2\Delta x} \right)$ can be rewritten in form of Eq.~\eqref{eq:reconstructionNonStandard} with the limiter function $\phi (\theta) = \frac{1+\theta}{2}$. One can easily check that this limiter function satisfies the property $\phi(\theta^{-1})=\theta^{-1}\phi(\theta)$ and therefore, for this limiter, Eq.~\eqref{eq:reconstructionNonStandard} can be rewritten in the formulation	
	\begin{subequations}
    	\label{eq:reconstruction}
		\begin{align}
	 	  	u^{(-)}_{i+1/2} &= \bar u_i + \frac{\Delta x}{2}\; \sigma_i \\
  		  	u^{(+)}_{i-1/2} &= \bar u_i - \frac{\Delta x}{2}\; \sigma_i,
		\end{align}
  	\end{subequations}
	with the right-sided slope $\sigma_i = \phi (\theta_i)\delta_{i+\frac{1}{2}} / \Delta x$, see e.g. \cite{LeVeque2002}. Even though this formulation is widely used, we will continue with the formulations \eqref{eq:general_interface_values} and \eqref{eq:reconstructionNonStandard}, as introduced in \cite{Dubois1990, CadaTorrilhon2009}.	
	The aim of this work is to discuss schemes which use the three-point stencil to achieve $3^{\text{rd}}$ order accurate reconstructions of the cell-interface values. One possibility is to construct a quadratic polynomial $p_i(x)$ in each cell $C_i$. Applying the computed polynomial to $x_{i\pm 1/2}$, we obtain the interface values
	\begin{subequations}
		\begin{align}
			\label{eq:thirdOrderPoly}
			u^{(-)}_{i+1/2} = p_i(x_{i+1/2})=\frac{1}{3}u_{i+1}+\frac{5}{6}u_{i}-\frac{1}{6}u_{i-1}\\
			u^{(+)}_{i-1/2} = p_i(x_{i-1/2})=\frac{1}{3}u_{i-1}+\frac{5}{6}u_{i}-\frac{1}{6}u_{i+1}.
		\end{align}
	\end{subequations}
It turns out that these expressions can be written in the form \eqref{eq:reconstructionNonStandard} to obtain the non-limited $3^{\text{rd}}$-order reconstruction
	\begin{align}
		\label{eq:3rdOrder}
		\phi_{3}(\theta_i) := \frac{2+\theta_i}{3}
	\end{align}
	with $\theta_i$ given by Eq.~\eqref{eq:theta}. This formulation results in a full-$3^\text{rd}$-order-accurate scheme for smooth solutions, however, causes oscillations near discontinuities. This can be seen either by noticing that $\phi_{3}(\theta_i)$ does not lie in the TVD region, see Fig.~\ref{fig:ASandCTlimiter}, or as a direct consequence of Godunov's Theorem \cite{Godunov1959}, since a linear scheme of more than $1^\text{st}$ order cannot be monotone. 
Since oscillations should be avoided, limiter functions have been introduced, which apply the full $3^{\text{rd}}$-order reconstruction \eqref{eq:3rdOrder} at smooth parts of the solution and switch to a lower-order reconstruction close to large gradients, shocks, and discontinuities. In \cite{ArtebrantSchroll2005}, Artebrant and Schroll present a limiter function, which can be formulated as $\phi_{\text{AS}}(\theta_i, q)$, see \cite{CadaTorrilhon2009}, based on a local-double-logarithmic reconstruction. This function does not solely depend on $\theta_i$ but also contains a parameter $q$ which significantly changes the reconstruction function. The authors recommend $q=1.4$ and demonstrate that for $q\to 0$, the logarithmic limiter function reduces to $\phi_{3}(\theta_i)$. Their limiter function reads
	\begin{align*}
		&\phi_{\text{AS}}(\theta_i, q) =\frac{2 p[(p^2-2 p\,\theta_i+1)\log(p)-(1-\theta_i)(p^2-1)]}{(p^2-1)(p-1)^2},\\
		&p=p(\theta_i, q)=2\frac{|\theta_i|^q}{1+|\theta_i|^{2q}}.
	\end{align*}
	The downside of $\phi_{\text{AS}}(\theta_i, q)$ is its complexity, which renders the evaluation in each cell expensive. 
{\v{C}}ada and Torrilhon \cite{CadaTorrilhon2009} derive, in an ad-hoc fashion, a limiter function $\phi_{\text{CT}}(\theta_i)$ which is based on $\phi_{\text{AS}}$. This function overcomes the drawbacks by approximating the properties of $\phi_{\text{AS}}$ and reducing the computational cost. It reads
	\begin{align}
		\label{eq:limiter_extended}		
		\phi_{\text{CT}}(\theta_i)=\max\left(0,\min\left( \phi_{3}(\theta_i), \max\left(-\frac{1}{2}\theta_i,\min\left(2 \,\theta_i,\phi_{3}(\theta_i),1.6 \right)\right)\right)\right)
	\end{align}
	and is shown in Fig.~\ref{fig:ASandCTlimiter} together with $\phi_{\text{AS}}(\theta_i, 1.4)$ and $\phi_{3}(\theta_i)$.
	\begin{figure}
		\centering
		\includegraphics[scale=0.5]{./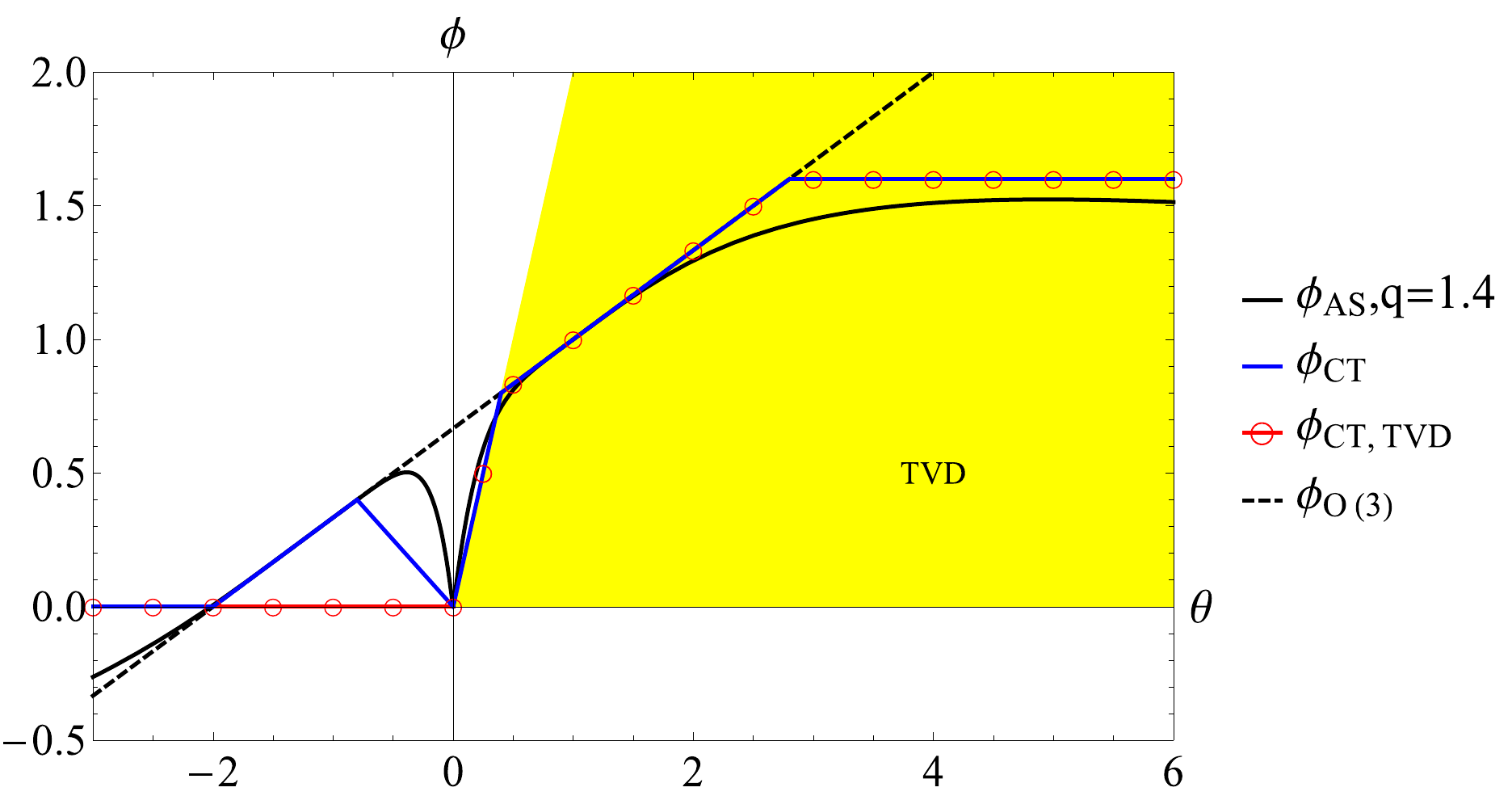}
		\caption{Logarithmic limiter function $\phi_{\text{AS}}$ with $q=1.4$ (black solid line), its approximation $\phi_{\text{CT}}$ (blue solid line), the TVD-version of $\phi_{\text{CT}}$ which does not have the non-zero part for $\theta<0$, denoted by $\phi_{\text{CT, TVD}}$ (red solid line with red circles) and the full-$3^\text{rd}$-order reconstruction $\phi_{\text{3}}$ (black dashed line).}
		\label{fig:ASandCTlimiter}
	\end{figure}
	Note that $\phi_{\text{CT}}$ does not lie within the strict TVD bounds, i.e. it breaks with \ref{prop:generalProperties} (ii).
The TVD property \ref{prop:generalProperties} (ii) can be achieved by considering only those parts of $\phi_{\text{CT}}$ where $\theta \geq 0$, and setting it to $0$ elsewhere, leading to the limiter function
	\begin{equation}
		\label{eq:limiter_tvd}
		\phi_\text{CT, TVD}(\theta) = \max\prn{0,\min\prn{2\theta,\phi_{3}(\theta), 1.6}}\;.
	\end{equation}
	However, the motivation for keeping the non-zero part in the construction of $\phi_{\text{CT}}(\theta)$ for $\theta \in [-2, 0]$ is to avoid the so-called extrema clipping. This is the effect occurring close to minima and maxima, where the normalized slopes $\delta_{i\pm 1/2}$ are of the same order of magnitude but have opposite signs, i.e. $\theta_i \approx -1$. In this case, classical limiter functions that fully lie in the strict TVD bounds yield zero and thus generate a $1^{\text{st}}$-order accurate scheme. This undesirable reduction in accuracy is avoided when the non-zero part of $\phi$ is included.
Therefore, $\phi_{\text{CT}}$ possesses better smoothness properties near $\theta = -1$ than $\phi_{\text{CT, TVD}}$ since here, $\phi_\text{CT}(\theta) = \phi_{3}(\theta)$ but $\phi_\text{CT, TVD}(\theta)=0$. For more details, see \cite{CadaTorrilhon2009}, where the non-zero part was first introduced.
	
	Nonetheless, it might occur that the discretization of an extremum is such that one of the consecutive slopes is approximately zero. In this case, $\theta \to 0$ or $\theta\to \pm \infty$ and the interface values $u_{i\pm 1/2}$ are again approximated by the cell mean values $\bar u_i$, which yields a $1^\text{st}$-order scheme. That is, a zero-slope is interpreted as the onset of a discontinuity, even though it might in fact be the magnified view of a smooth extremum. This undesired case demonstrates that a criterion is needed which can differentiate a smooth extremum from a discontinuity or steep gradient.
In the framework considered in this paper, the criterion should only depend on the available information of the compact three-point stencil. Furthermore, it has to detect cases when switching to the $3^\text{rd}$ order reconstruction is safe, even though one of the normalized slopes is zero.
We assume that using the $3^\text{rd}$ order reconstruction is safe if the non-zero slope is 'small'. In turn, if the non-zero slope is not small, we assume to be near a discontinuity or large gradient, and the order should be reduced.
The main focus of Section~\ref{subsec:2parameters} is to determine what 'small' means and to define a suitable smoothness indicator $\eta$.
%
\subsection{Interpretation in 2D Slope Domain}\label{subsec:2parameters}
	From the discussion above, it is clear that such a switch function $\eta$ has to explicitly depend on both normalized slopes $\delta_{i\pm1/2}$, Eq.~(\ref{eq:deltaP}, \ref{eq:deltaM}). The classical approach of only considering the ratio $\theta_i$, Eq.~\eqref{eq:theta}, of neighboring slopes is overly restrictive because part of the information (the actual magnitude of the two slopes) is discarded. This is why we reformulate all limiter functions $\phi$ in a two-parameter framework and obtain the new formulation for the reconstructed interface values (see Eq.~\eqref{eq:reconstructionNonStandard}) as
	\begin{subequations}
    	\label{eq:update_finite_volume_2d}
		\begin{align}
	 	  	u^{(-)}_{i+1/2} &= \bar u_i + \tfrac{1}{2}\; H(\delta_{i-\frac{1}{2}},\delta_{i+\frac{1}{2}}), \\
  		  	u^{(+)}_{i-1/2} &= \bar u_i - \tfrac{1}{2}\; H(\delta_{i+\frac{1}{2}},\delta_{i-\frac{1}{2}})
		\end{align}
  	\end{subequations}
	with the limiter function $H$ explicitly depending on both normalized slopes. The old limiter function $\phi(\theta_i)$ can of course be rewritten in the new form of the two-parameter function $H$ by setting
 	\begin{align}
 		\label{eq:h_finite_volume}
 		H(\delta_{i-\frac{1}{2}}, \delta_{i+\frac{1}{2}}):= \phi(\theta_i)\delta_{i+\frac{1}{2}}=\phi\left(\frac{\delta_{i-\frac{1}{2}}}{\delta_{i+\frac{1}{2}}}\right)\delta_{i+\frac{1}{2}}.
	\end{align}
	In this setting, the full-$3^\text{rd}$-order reconstruction $\phi_3(\theta_i)=(2+\theta_i)/3$, given by Eq.~\eqref{eq:3rdOrder}, now reads
	\begin{align}
		\label{eq:3rdOrder2param}
		H_{3}(\delta_{i-\frac{1}{2}}, \delta_{i+\frac{1}{2}}) =\frac{2 \delta_{i+\frac{1}{2}} + \delta_{i-\frac{1}{2}}}{3}.
	\end{align}
	This formulation has the advantage that there is no division by the normalized slope $\delta_{i\pm 1/2}$. Thus, a possible division by a number close to zero is avoided. 
	\begin{figure}
		\begin{subfigure}{.49\textwidth}
			\includegraphics[width=\textwidth]{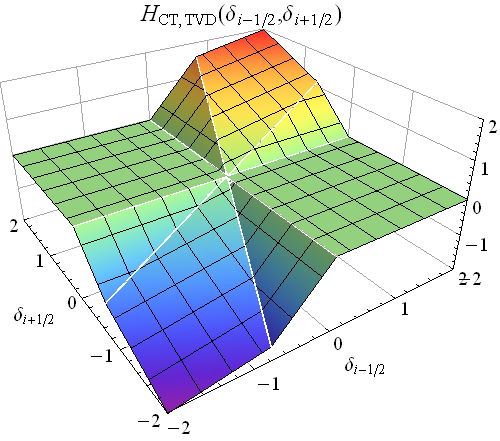}
			\caption{Limiter function $H_\text{CT, TVD}(\delta_{i-\frac{1}{2}}, \delta_{i+\frac{1}{2}})$ satisfying the strict TVD bounds.}
			\label{fig:limiter_tvd}
		\end{subfigure}
		\vspace{1em}
		\begin{subfigure}{.49\textwidth}
			\includegraphics[width=\textwidth]{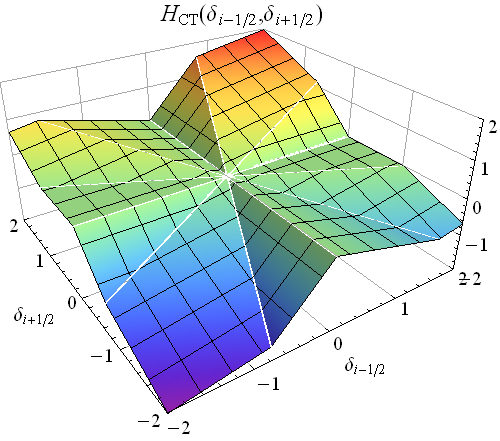}
			\caption{Extended limiter function $H_\text{CT}(\delta_{i-\frac{1}{2}}, \delta_{i+\frac{1}{2}})$.}
			\label{fig:limiter_ext}
		\end{subfigure}
		\caption{Different limiter functions in the two-parameter framework.}
	\end{figure}
	Fig.~\ref{fig:limiter_tvd} shows the limiter function $H_{\text{CT,TVD}}$ which satisfies the strict TVD bounds. This can clearly be seen by the zero parts for $\sgn(\delta_{i-\frac{1}{2}})\neq \sgn(\delta_{i+\frac{1}{2}})$. Fig.~\ref{fig:limiter_ext} shows the extended version $H_{\text{CT}}$ in the two-parameter setting. On the coordinate axis where $\delta_{i-\frac{1}{2}}=0$, i.e. $\theta_i=0$, the limiter function $H_{\text{CT}}$ returns zero, meaning that it yields a $1^{\text{st}}$-order method. The same holds for the coordinate axis where $\delta_{i+\frac{1}{2}}=0$. For two consecutive slopes of approximately the same order of magnitude, i.e. around the diagonals, $H_{\text{CT}}$ recovers the $3^{\text{rd}}$-order reconstruction $H_{3}$. This is the case for $\delta_{i-\frac{1}{2}}\approx-\delta_{i+\frac{1}{2}}$ as well as for $\delta_{i-\frac{1}{2}}\approx\delta_{i+\frac{1}{2}}$, contrary to $H_{\text{CT,TVD}}$ which returns $H_{3}$ only in the latter case and $0$ for $\delta_{i-\frac{1}{2}}\approx-\delta_{i+\frac{1}{2}}$.\\
	\begin{figure}[b]
   	\centering
   		\begin{subfigure}{0.49\textwidth}
   			\centering
   			\includegraphics[width=0.8\textwidth]{./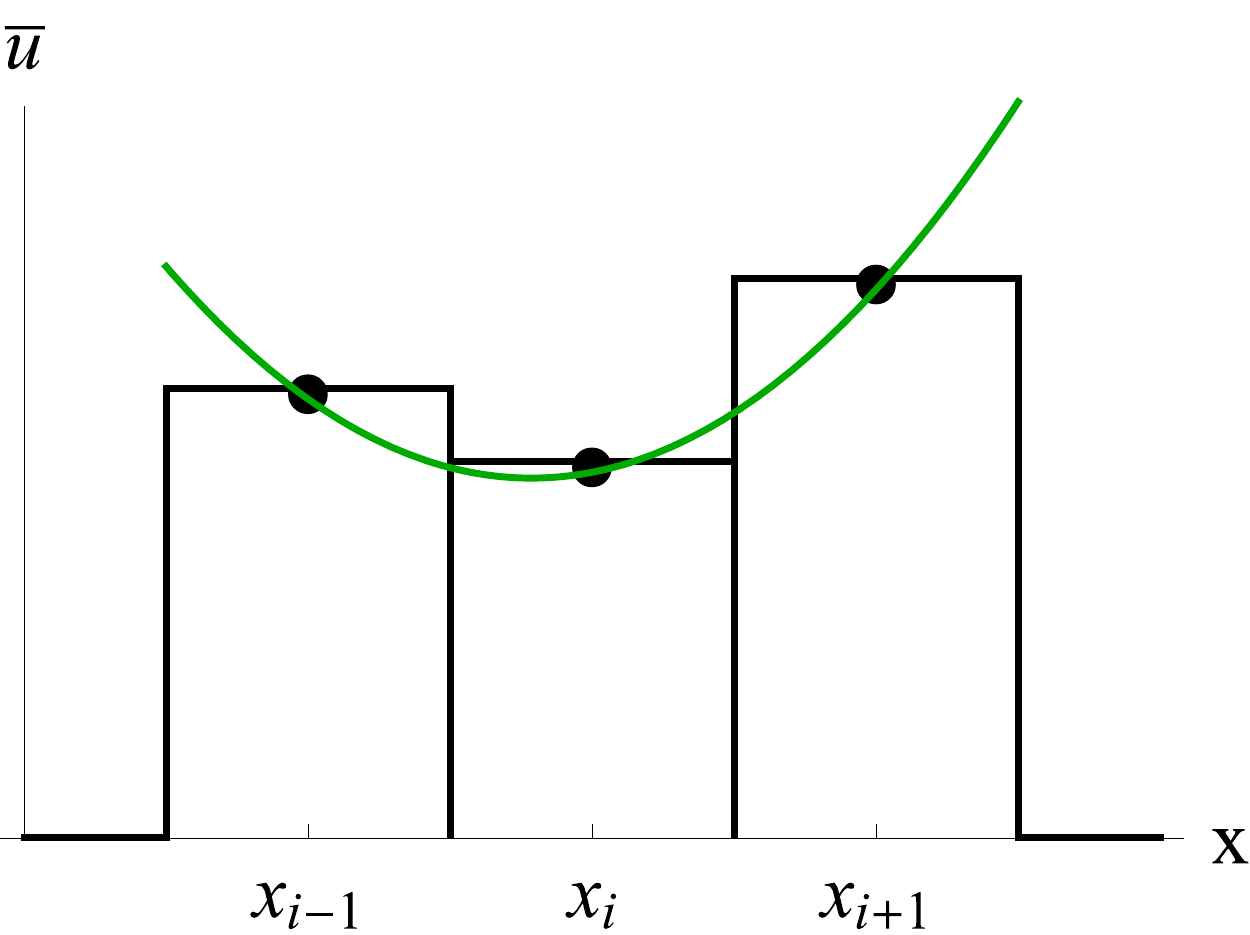}
	   		\caption{This situation is treated as a possible discontinuity: $H_{\text{CT}}\neq H_{3}$.}
   		\end{subfigure}
   		\hfill
   		\begin{subfigure}{0.49\textwidth}
   			\centering
   			\includegraphics[width=0.8\textwidth]{./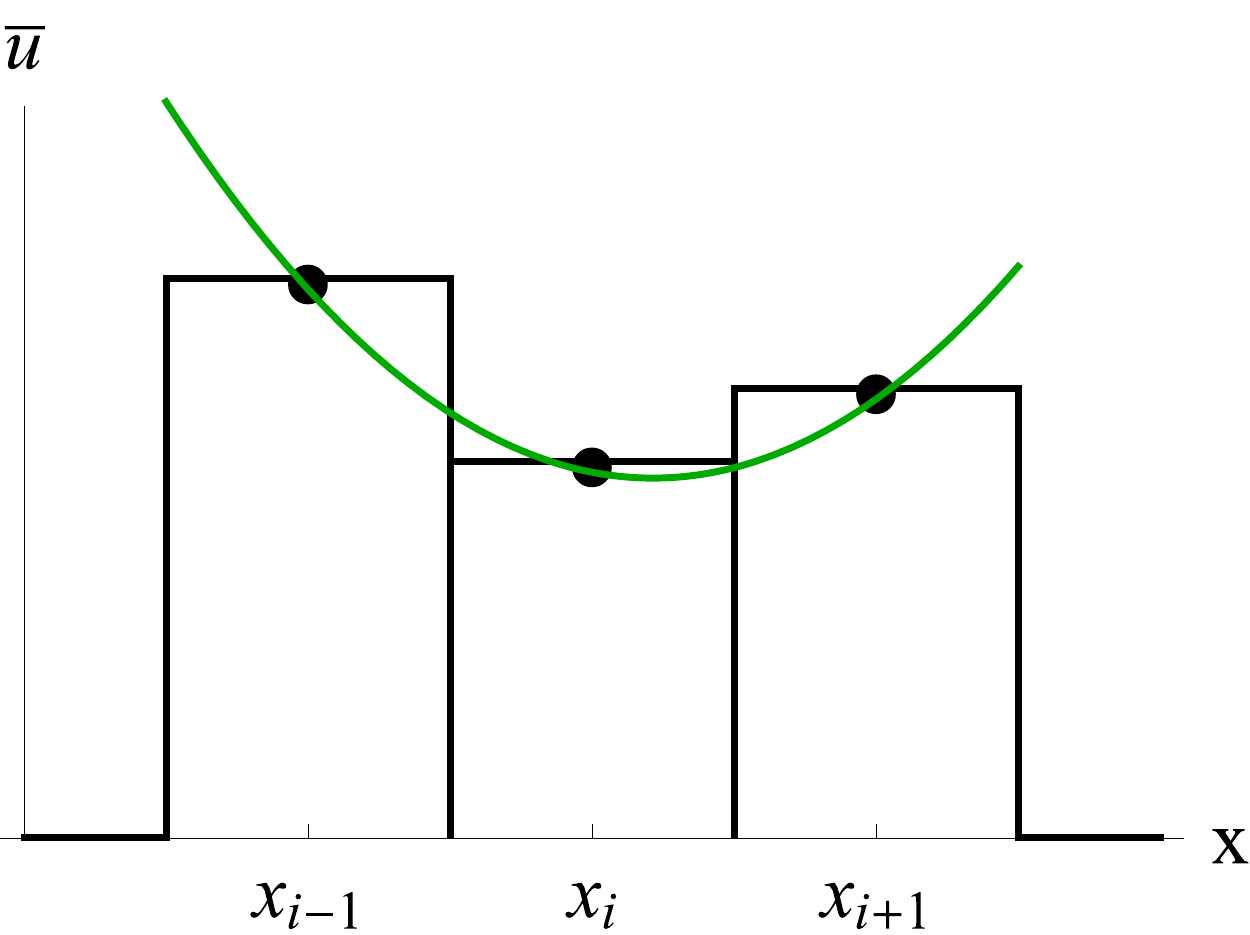}   		
	   		\caption{This situation is classified as smooth: $H_{\text{CT}}=H_{3}$.}
   		\end{subfigure}
   		\caption{Two situations which are reflections of each other, treated differently by $H_{\text{CT}}$.}
   		\label{fig:unsymmetricDeltas}
   \end{figure}
	The limiter function presented in \cite{CadaTorrilhon2009} is not symmetric with respect to the diagonals. This means that in some cases $H_{\text{CT}}(\delta_1,\delta_2) = H_{3}(\delta_1,\delta_2)$ but $H_{\text{CT}}(-\delta_2, -\delta_1) \neq H_{3}(-\delta_2, -\delta_1)$, cf. Fig.~\ref{fig:unsymmetricDeltas}.
This asymmetry is not natural because these two situations are equally smooth or non-smooth.
We therefore correct this feature by defining the following $3^{\text{rd}}$-order limiter function $H_{3\text{L}}(\delta_{i-\frac{1}{2}}, \delta_{i+\frac{1}{2}})$
	\begin{align}
		\label{eq:newFVlimiter}
		H_{3\text{L}}(\delta_{i-\frac{1}{2}},\delta_{i+\frac{1}{2}}) = 
		\,&\sgn(\delta_{i+\frac{1}{2}})\;\max(0,\min(\sgn(\delta_{i+\frac{1}{2}})\,H_{3}, \max(-\sgn(\delta_{i+\frac{1}{2}})\delta_{i-\frac{1}{2}}, \\
		&\min(2\,\sgn(\delta_{i+\frac{1}{2}})\delta_{i-\frac{1}{2}}, \sgn(\delta_{i+\frac{1}{2}})\,H_{3}, 1.5 |\delta_{i+\frac{1}{2}}|)))) \nonumber
	\end{align}
	This new limiter function treats symmetric situations in the same manner, i.e. if $H_{3\text{L}}(\delta_1,\delta_2) = H_{3}(\delta_1,\delta_2)$ then also $H_{3\text{L}}(-\delta_2, -\delta_1) = H_{3}(-\delta_2, -\delta_1)$, cf. Fig.~\ref{fig:unsymmetricDeltas}. The difference between the functions $H_{\text{CT}}$and $H_{3\text{L}}$ can also be seen in Fig.~\ref{fig:comparePhiCTandPhi3L}. \\
\\
	Expressing the interface values in the more general form \eqref{eq:general_interface_values}, we can determine some properties for $L(\cdot,\cdot,\cdot)$ and $R(\cdot,\cdot,\cdot)$, as done in \cite{Dubois1990}. These properties are valid for all limiter functions presented so far, i.e. for $H_{3},\, H_{\text{CT}},\, H_{3\text{L}}$.
	\begin{property}\label{prop:Homogeneity}\textbf{Homogeneity}\\
		Multiplying the arguments of $L$ and $R$ in Eq.~\eqref{eq:general_interface_values} by the same real number $\lambda$ multiplies the interface values $u_{i+\frac{1}{2}}^{(-)}$ and $u_{i+\frac{1}{2}}^{(+)}$, respectively, by the same constant $\lambda$. This is, $L$ and $R$ are called \textbf{homogeneous}, i.e.~linear along each line through the origin in the $(\delta_{i-\frac{1}{2}},\delta_{i+\frac{1}{2}})$ plane.
		\begin{align}
			J(\lambda u, \lambda v, \lambda w) = \lambda J(u, v, w),\quad J\in \{L,\, R\},\, \lambda\in\mathbb{R}
		\end{align}
	\end{property}
	\begin{property}\label{prop:Translation}\textbf{Translational invariance}\\
		Adding a constant $\lambda$ to the arguments of \eqref{eq:general_interface_values}
		adds the same constant $\lambda$ to
		the interface values. This is, $L$ and $R$ are called \textbf{translationally invariant}.
		\begin{align}
			J(u+\lambda, v+\lambda, w+\lambda) = J(u, v, w)+\lambda,\quad J\in \{L,\, R\},\, \lambda\in\mathbb{R}
		\end{align}
	\end{property}
	\begin{property}\label{prop:LeftRight}\textbf{Left-Right symmetry}\\
		Exchanging the first and third argument of \eqref{eq:general_interface_values} interchanges the left
		and right interface values, (cf. \cite{Dubois1990} for more details and figures)
		\begin{align}
			R(w,v,u)=L(u,v,w).
		\end{align}
	\end{property}
	\begin{lemma}\label{prop:limiterExistance}{\ }\newline
	If properties \ref{prop:Homogeneity} to \ref{prop:LeftRight} are satisfied, there exists an appropriate limiter function $\psi: \mathbb{R}\to \mathbb{R}$ such that
		\begin{align}
			L(u,v,w)&= v + \frac{1}{2}\psi\left(\frac{u-v}{w-v}\right)(w-v)\\
			R(u,v,w)&= v - \frac{1}{2}\psi\left(\frac{w-v}{u-v}\right)(v-u).
		\end{align}
	\end{lemma}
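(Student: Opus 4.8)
The plan is to strip away the three structural properties one at a time, collapsing the trivariate maps $L$ and $R$ down to a single univariate function. First I would apply translational invariance (Property~\ref{prop:Translation}) with $\lambda=-v$ to obtain $L(u,v,w)=v+L(u-v,0,w-v)$, so that $L$ is completely determined by the bivariate function $(a,b)\mapsto L(a,0,b)$; the analogous reduction applies to $R$.

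Next I would use homogeneity (Property~\ref{prop:Homogeneity}). For $b=w-v\neq 0$, choosing the scaling factor $\lambda=b$ gives $L(a,0,b)=b\,L(a/b,0,1)$. Setting
\[
  \psi(\theta):=2\,L(\theta,0,1),
\]
and substituting $a=u-v$, $b=w-v$ then yields $L(u,v,w)=v+\tfrac12(w-v)\,\psi\!\prn{\tfrac{u-v}{w-v}}$, which is the first claimed identity. Here $\psi$ is well defined precisely because it is given by an explicit closed-form expression in terms of $L$.

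Finally, to obtain $R$ with the \emph{same} $\psi$, I would invoke Left--Right symmetry (Property~\ref{prop:LeftRight}), $R(u,v,w)=L(w,v,u)$, and feed in the formula just derived: $R(u,v,w)=v+\tfrac12(u-v)\,\psi\!\prn{\tfrac{w-v}{u-v}}=v-\tfrac12\psi\!\prn{\tfrac{w-v}{u-v}}(v-u)$. The sign rearrangement in the last step is exactly what produces the stated normal form for $R$, and this is the step where Left--Right symmetry is indispensable --- without it $R$ would carry its own, possibly different, limiter function. The only genuinely delicate point is the degenerate locus $w=v$ (resp.\ $u=v$), where the argument of $\psi$ is undefined; there homogeneity alone forces $L(a,0,0)$ to be linear in $a$, so one either restricts the statement to $w\neq v$ or, assuming continuity of $L$, matches this linear behaviour to the asymptotic slope of $\psi$ as $\theta\to\pm\infty$. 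I expect this boundary bookkeeping, rather than the main reduction, to be the only part that requires any care.
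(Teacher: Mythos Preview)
Your argument is correct and is exactly the natural reduction: translational invariance collapses the trivariate map to a bivariate one, homogeneity collapses that to a univariate $\psi$, and Left--Right symmetry forces $R$ to use the same $\psi$. The remark on the degenerate locus $w=v$ is appropriate and is the only subtlety.

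For comparison with the paper: the paper does not actually supply a proof of this lemma. It is stated as a standalone result, with the surrounding properties attributed to Dubois~\cite{Dubois1990}, and the subsequent Lemma~\ref{lemma:equivLimiters} then \emph{uses} the representation rather than deriving it. So there is no ``paper's own proof'' to compare against; your write-up fills a gap the authors left implicit.
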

	\begin{lemma}
		\label{lemma:equivLimiters}
		With properties \ref{prop:Homogeneity} to \ref{prop:LeftRight}, it is easy to verify that
		\begin{itemize}
			\item[(i)] computing the interface values $u^{(\pm)}_{i\pm1/2}$ with $L$ and $R$, given by Eq.~\eqref{eq:general_interface_values}, or in the form of Eq.~\eqref{eq:reconstructionNonStandard}, are equivalent.
			\item[(ii)] For $\delta_{i+\frac{1}{2}}\neq 0$ the formulations of the cell interfaces in the one-parameter framework , Eq.~\eqref{eq:reconstructionNonStandard} and in the two-parameter framework, Eq.~\eqref{eq:update_finite_volume_2d}, are equivalent.
		\end{itemize}
	\end{lemma}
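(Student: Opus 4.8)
The plan is to handle the two items separately; each reduces to a direct substitution, so no essential difficulty is expected, consistent with the phrasing ``easy to verify''.

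For item (i), one direction is little more than a reading of Lemma~\ref{prop:limiterExistance}: given $L,R$ with Properties~\ref{prop:Homogeneity}--\ref{prop:LeftRight}, that lemma supplies a $\psi$ with the stated representation, and inserting $(u,v,w)=(\bar u_{i-1},\bar u_i,\bar u_{i+1})$ together with $w-v=\delta_{i+\frac12}$ and $v-u=\delta_{i-\frac12}$ turns the two formulas into \eqref{eq:reconstructionNonStandard}, after identifying $\phi$ with $\psi$ up to the sign convention fixed in \eqref{eq:theta} (the argument $\tfrac{u-v}{w-v}$ equals $-\theta_i$, and $\tfrac{w-v}{u-v}$ equals $-\theta_i^{-1}$). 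For the reverse direction I would start from an arbitrary $\phi$, define $L,R$ via \eqref{eq:general_interface_values} and \eqref{eq:reconstructionNonStandard}, and verify the three properties directly: homogeneity holds because $\theta_i$ is invariant under a common rescaling of the cell averages while $\delta_{i\pm\frac12}$ scale linearly; translational invariance holds because the $\delta_{i\pm\frac12}$ are unaffected by adding a constant to all three arguments, which is then carried by the leading term $\bar u_i$; and left--right symmetry holds because swapping the outer arguments exchanges $\delta_{i-\frac12}\leftrightarrow\delta_{i+\frac12}$, hence $\theta_i\leftrightarrow\theta_i^{-1}$, mapping the $L$-formula onto the $R$-formula.

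For item (ii), I would simply use the defining relation \eqref{eq:h_finite_volume}. Read as a rule for a generic ordered pair of slopes it says $H(p,q)=\phi(p/q)\,q$; in particular $H(\delta_{i-\frac12},\delta_{i+\frac12})=\phi(\theta_i)\,\delta_{i+\frac12}$ and $H(\delta_{i+\frac12},\delta_{i-\frac12})=\phi(\theta_i^{-1})\,\delta_{i-\frac12}$. Substituting the first expression into the first line of \eqref{eq:update_finite_volume_2d} and the second into its second line reproduces \eqref{eq:reconstructionNonStandard} term by term. The hypothesis $\delta_{i+\frac12}\neq 0$ is precisely what is required for $\theta_i$ to be finite, i.e. for the one-parameter form to be defined at all; the remaining degenerate configuration $\delta_{i-\frac12}=0$ is harmless, since both representations then return $\bar u_i$ (for a bounded limiter $H$ extends continuously across $\delta_{i-\frac12}=0$, whereas the one-parameter form does not --- one of the motivations for the reformulation).

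The only point that needs care is the bookkeeping: tracking the sign in the argument of $\psi$ versus $\phi$ and keeping straight which of the two slopes plays the role of numerator and which of denominator in the reconstruction of $u^{(+)}_{i-1/2}$. I do not anticipate any genuine obstacle beyond this.
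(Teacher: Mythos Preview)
Your proposal is correct and follows essentially the same approach as the paper. For (i) the paper likewise invokes Lemma~\ref{prop:limiterExistance} with $(u,v,w)=(\bar u_{i-1},\bar u_i,\bar u_{i+1})$ and identifies $\phi(\theta_i):=\psi(-\theta_i)$, though it omits the reverse direction you sketch; for (ii) the paper phrases the same computation via homogeneity of $H$ (writing $H(\delta_{i-\frac12},\delta_{i+\frac12})=H(\theta_i,1)\,\delta_{i+\frac12}$) rather than appealing to \eqref{eq:h_finite_volume} directly, but the content is identical.
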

	\begin{proof}
		We will only show $(i)$ for $u^{(-)}_{i+1/2}$; the other cases are similar.
		\begin{itemize}
			\item[(i)] Setting $(u, v, w) = (\bar u_{i-1}, \bar u_i, \bar u_{i+1})$ yields
				\begin{align*}
					u^{(-)}_{i+1/2}\stackrel{\eqref{eq:general_interface_values}}=L(\bar u_{i-1}, \bar u_i, \bar u_{i+1})\stackrel{\eqref{prop:limiterExistance}}=\bar u_i+\frac{1}{2}\psi\left(\frac{\bar u_{i-1}-\bar u_{i}}{\bar u_{i+1}-\bar u_{i}}\right)(\bar u_{i+1}-\bar u_{i}) = \bar u_i+\frac{1}{2}\phi\left(\theta_i\right)\delta_{i+\frac{1}{2}}
				\end{align*}
				where $ \delta_{i+\frac{1}{2}}=\bar u_{i+1}-\bar u_{i}$ and $\psi\left(\bar u_{i-1}-\bar u_{i}/\bar u_{i+1}-\bar u_{i}\right) = \psi(-\theta_i)=:\phi(\theta_i)$ applying Eq.~\eqref{eq:theta}.
			\item[(ii)] Due to the homogeneity of $L$ and $R$, we can easily see with Eq.~\eqref{eq:update_finite_volume_2d} that
			\begin{align}
				H(\delta_{i-\frac{1}{2}}, \delta_{i+\frac{1}{2}}) = H(\frac{\delta_{i-\frac{1}{2}}}{\delta_{i+\frac{1}{2}}}, 1)\;\delta_{i+\frac{1}{2}} = H(\theta_i, 1)\;\delta_{i+\frac{1}{2}} = \phi(\theta_i)\;\delta_{i+\frac{1}{2}}.
			\end{align}
		\end{itemize}		
	\end{proof}
	%
\subsection{A New Smoothness Indicator}\label{subsec:smoothnessIndicator}
%
	\begin{figure}[t]
		\centering
		\includegraphics[scale=0.6]{./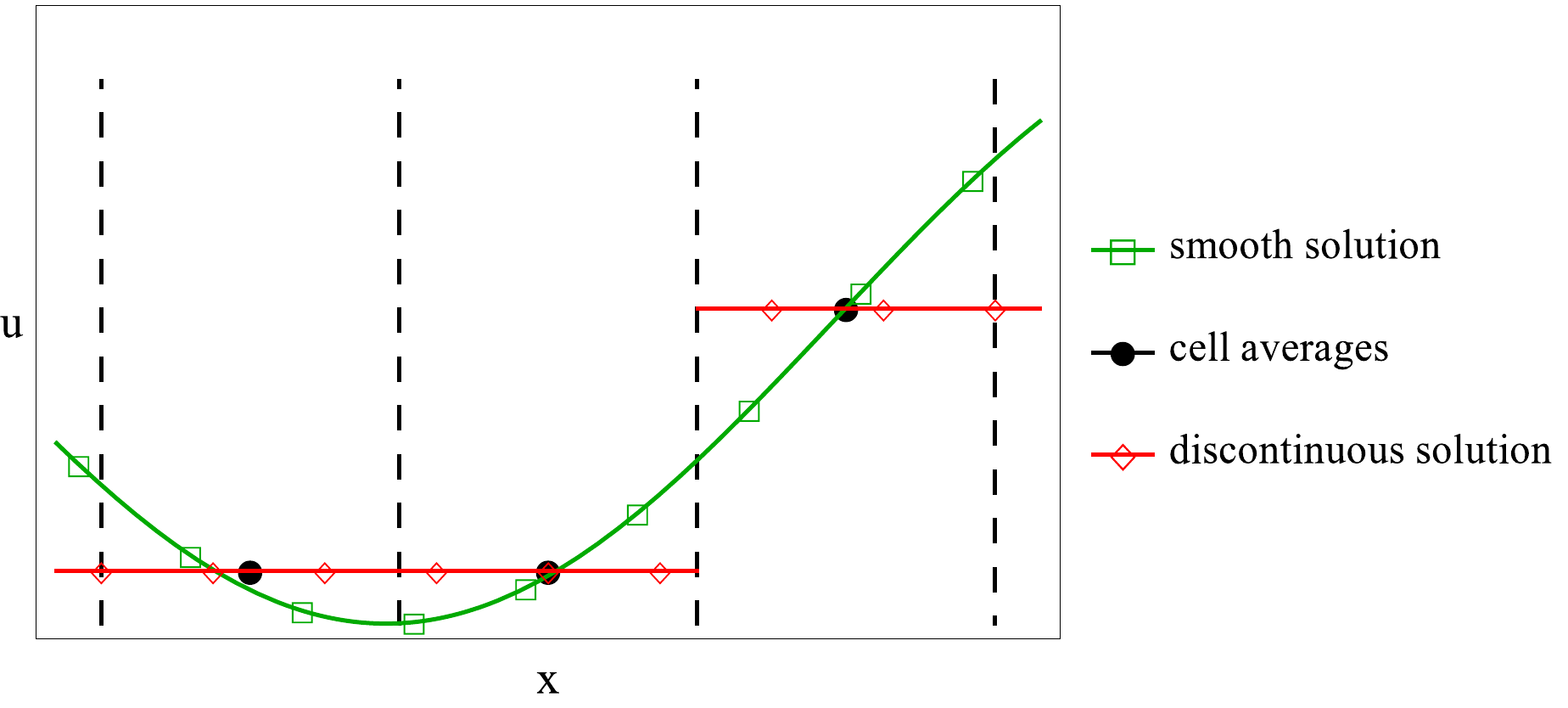}
		\caption{Identical cell mean values, corresponding to a smooth solution, or to a discontinuous solution.}
		\label{fig:smoothVSdisc}
	\end{figure}
	Given three cell averages, it is in general not possible to determine whether these points represent the onset of a discontinuity or the magnified view of an extremum. An example is shown in Fig.~\ref{fig:smoothVSdisc} where one set of cell mean values could be obtained by the smooth function (green, square markers) or the discontinuous function (red, diamond markers). As stated in Sec.~\ref{subsec:2parameters}, the two-parameter setting is the necessary prerequisite for the definition of such a criterion. {\v{C}}ada and Torrilhon \cite{CadaTorrilhon2009} proposed the function
	\begin{align}
		\label{eq:etaCT}
 			\eta_{\text{CT}}(\delta_{i-\frac{1}{2}}, \delta_{i+\frac{1}{2}})=\frac{\delta_{i-\frac{1}{2}}^2+\delta_{i+\frac{1}{2}}^2}{(r\Delta x)^2}.
	\end{align}
	This switch function defines an asymptotic region of radius $r$ around the origin in the $(\delta_{i-\frac{1}{2}}, \delta_{i+\frac{1}{2}})$-plane. Within this region the limiter switches to the $3^\text{rd}$-order reconstruction.

The authors of \cite{CadaTorrilhon2009} then modified the structure of the limiter function $\phi_{\text{CT}}$ to include the asymptotic region around the origin. The combination, denoted by the superscript $(c)$, is defined as
	\begin{equation}
		\label{eq:HCTcomb}
		\phi^{\text{(c)}}_{\text{CT}}(\theta_i) :=
		\begin{cases}
			\phi_{3}(\theta_i) \quad\qquad &\text{if}\;\eta_{\text{CT}} < 1\\
	  		\phi_{\text{CT}}(\theta_i)\quad\; &\text{if}\;\eta_{\text{CT}} \geq 1.	  		
	  	\end{cases}
	\end{equation}
	It is possible to make this function Lipschitz continuous, by introducing a small transition region and a linear function, cf.~\cite{CadaTorrilhon2009} for more details. This limiter function, combining $\phi_{\text{CT}}$ and the switch function $\eta_{\text{CT}}$, has been successfully employed in e.g. \cite{KeppensPorth2014, Mignone2010, Kemm2011}.
In \cite{CadaTorrilhon2009}, the authors did not provide a general formulation of the parameter $r$ which determines the size of the asymptotic region. Instead it was chosen ad hoc, in a problem-specific way.
To obtain some generic idea about suitable choices for $r$, we conduct a numerical test, applying $\phi^{\text{(c)}}_{\text{CT}}$ to the advection equation $u_t+u_x=0$ with smooth initial condition $u_0(x)=\sin(\pi x)$ for different values of $r$. Fig.~\ref{fig:cadaL1} shows the double logarithmic plot of the $L_1$- and $L_\infty$-errors versus the number of grid cells of the solution advected until $t_\text{end}=1$ with CFL number $\nu=0.9$. For this
smooth test case, we see that larger values of $r$, corresponding to larger asymptotic regions are favorable. For smooth solution this makes sense because increasing values of $r$ corresponds to increasing the region of directly applying the full-$3^\text{rd}$ order reconstruction $\phi_{3}$. From Fig.~\ref{fig:cadaL1} we can deduce that for smaller $r$, a finer space discretization is needed to obtain $3^\text{rd}$ order accuracy.
	\begin{figure}
		\centering
		\begin{subfigure}	{.49\textwidth}
			\includegraphics[width=\textwidth]{./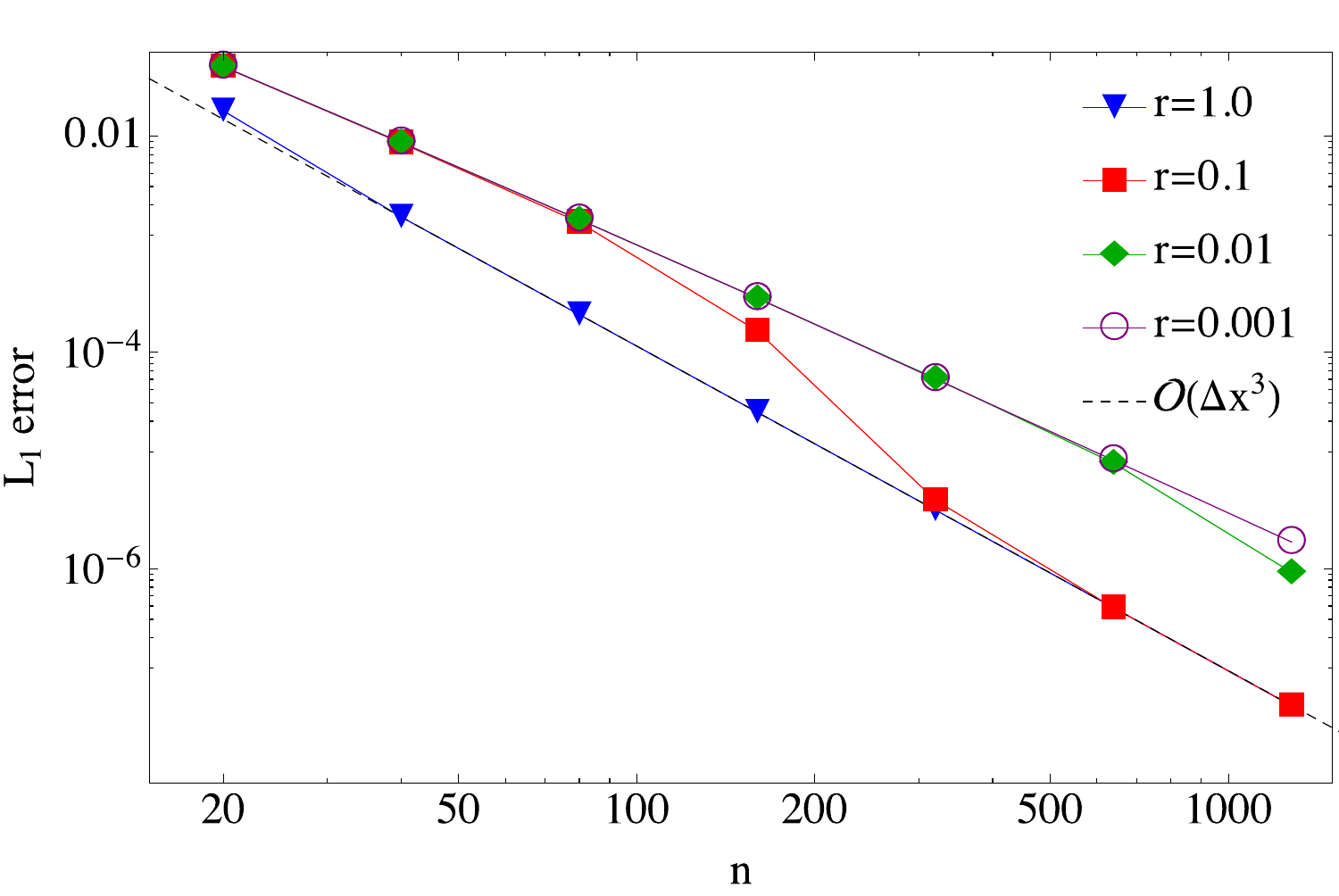}
		\end{subfigure}				
		\begin{subfigure}{.49\textwidth}
			\includegraphics[width=\textwidth]{./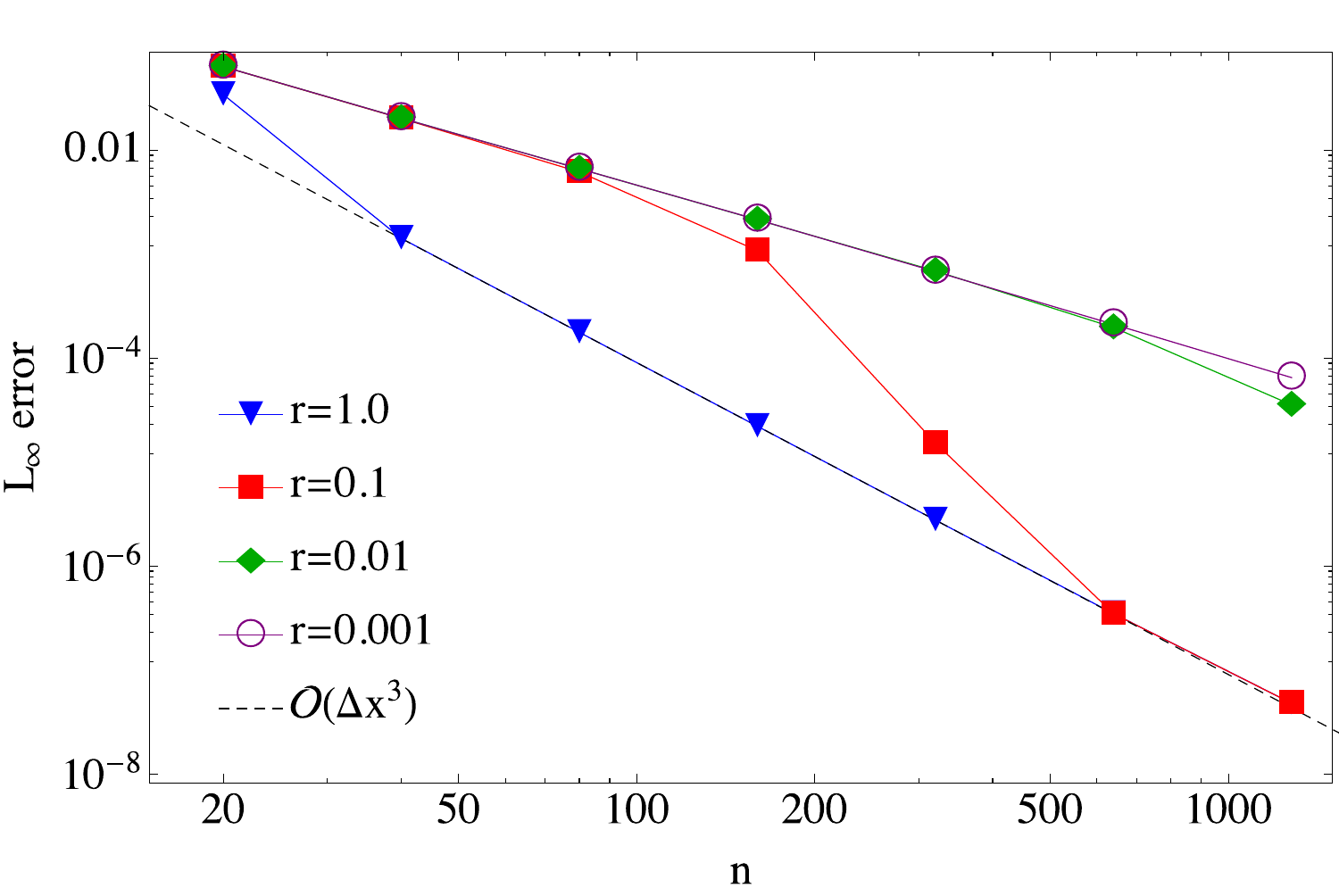}
		\end{subfigure}				
		\caption{Double logarithmic plot of the $L_1$- and $L_\infty$-errors versus the number of grid cells of the solution obtained with $\phi^{\text{(c)}}_{\text{CT}}$ advected until $t_\text{end}=1$ with CFL number $\nu=0.9$. The numerical solutions are shown for different values of $r$, see Eq.~\eqref{eq:etaCT}. Left: $L_1$-error, right: $L_\infty$-error.}
		\label{fig:cadaL1}
	\end{figure}
	\newline
	Motivated by the limiter function of \cite{CadaTorrilhon2009}, we want to define a new smoothness indicator without artificial parameters. As already sketched in \cite{SchmidtmannHYP2014}, a promising potential to distinguish discontinuities from smooth extrema is by measuring the magnitude of the vector $(\delta_{i-\frac{1}{2}}, \delta_{i+\frac{1}{2}})$. If this vector is sufficiently small in some appropriate norm, the reconstruction is switched to the full-$3^\text{rd}$ order reconstruction, even if one of the lateral derivatives may be vanishing.
	\begin{lemma}\label{lemma:magnitudeEta}
		In the vicinity of an extremum $\xi_0$, for $|x_i-\xi_0| \leq \Delta x$, the following relations hold for each time $t^n$:
		\begin{subequations}		
		\begin{align}
		\label{eq:lemma1}
			\begin{Vmatrix}			
				\left( \delta_{i-\frac{1}{2}}, \delta_{i+\frac{1}{2}}\right)
			\end{Vmatrix}_2
			&\leq \,\sqrt{c}\,\max_{x \in \Omega \backslash \Omega_d}  |u^{\prime\prime}(x,t^n)|\,\Delta x^2\quad \text{with}\;c=\frac{5}{2}+\O(\Delta x),
			\\
			\label{eq:lemma2}
			\begin{Vmatrix}			
				\left( \delta_{i-\frac{1}{2}}, \delta_{i+\frac{1}{2}}\right)
			\end{Vmatrix}_1
			&\leq \,c\,\max_{x \in \Omega \backslash \Omega_d}  |u^{\prime\prime}(x,t^n)|\,\Delta x^2\quad \text{with}\;c=2+\O(\Delta x).
		\end{align}			
		\end{subequations}	
		Here, $\Omega$ is the computational domain, and $\Omega_d$ is a set of points where the solution is discontinuous.
	\end{lemma}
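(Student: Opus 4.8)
The plan is to reduce the cell-average differences $\delta_{i\pm\frac12}$ to point values of $u$ and then exploit the defining property $u'(\xi_0)=0$ of a smooth extremum. First I would expand each cell average by the midpoint rule, $\bar u_j = u(x_j) + \tfrac{1}{24}\Delta x^2\,u''(x_j) + \O(\Delta x^4)$, which is legitimate because, by hypothesis, the whole stencil $[x_{i-1}-\Delta x/2,\,x_{i+1}+\Delta x/2]$ lies in the smooth region $\Omega\setminus\Omega_d$. Since $u''(x_{i+1})-u''(x_i)=\O(\Delta x)$, this gives $\delta_{i+\frac12}= u(x_{i+1})-u(x_i)+\O(\Delta x^3)$ and $\delta_{i-\frac12}= u(x_i)-u(x_{i-1})+\O(\Delta x^3)$, and a Taylor expansion about $x_i$ then yields
\begin{align*}
\delta_{i+\frac12} = \Delta x\,u'(x_i) + \tfrac12\Delta x^2\,u''(x_i) + \O(\Delta x^3),\\
\delta_{i-\frac12} = \Delta x\,u'(x_i) - \tfrac12\Delta x^2\,u''(x_i) + \O(\Delta x^3).
\end{align*}
Equivalently one may avoid the midpoint rule by writing $\delta_{i+\frac12}=\tfrac{1}{\Delta x}\int_{x_i-\Delta x/2}^{x_i+\Delta x/2}\left(u(s+\Delta x)-u(s)\right)ds$ and Taylor-expanding the integrand; both routes give the same leading behaviour.

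Next I would insert the extremum condition. Writing $\epsilon := x_i-\xi_0$ with $|\epsilon|\le\Delta x$ and Taylor-expanding about $\xi_0$, the vanishing of $u'(\xi_0)$ gives $u'(x_i)=\epsilon\,u''(\xi_0)+\O(\Delta x^2)$ while $u''(x_i)=u''(\xi_0)+\O(\Delta x)$. Substituting, the two components collapse to $\delta_{i+\frac12} = u''(\xi_0)\,\Delta x\,(\epsilon+\tfrac12\Delta x)+\O(\Delta x^3)$ and $\delta_{i-\frac12} = u''(\xi_0)\,\Delta x\,(\epsilon-\tfrac12\Delta x)+\O(\Delta x^3)$, i.e.
\begin{align*}
\left(\delta_{i-\frac12},\,\delta_{i+\frac12}\right) = u''(\xi_0)\,\Delta x\,\left(\epsilon-\tfrac12\Delta x,\;\epsilon+\tfrac12\Delta x\right)+\O(\Delta x^3).
\end{align*}

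Finally I would read off the two norms and optimise over $\epsilon$. For the Euclidean norm, $(\epsilon-\tfrac12\Delta x)^2+(\epsilon+\tfrac12\Delta x)^2 = 2\epsilon^2+\tfrac12\Delta x^2$, which over $|\epsilon|\le\Delta x$ attains its maximum $\tfrac52\Delta x^2$ at $\epsilon=\pm\Delta x$; hence $\|(\delta_{i-\frac12},\delta_{i+\frac12})\|_2\le\sqrt{5/2}\,|u''(\xi_0)|\,\Delta x^2+\O(\Delta x^3)$, and bounding $|u''(\xi_0)|\le\max_{x\in\Omega\setminus\Omega_d}|u''(x,t^n)|$ yields \eqref{eq:lemma1} with $c=\tfrac52+\O(\Delta x)$. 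For the $1$-norm, $|\epsilon-\tfrac12\Delta x|+|\epsilon+\tfrac12\Delta x|$ equals $\Delta x$ when $|\epsilon|\le\tfrac12\Delta x$ and equals $2|\epsilon|\le2\Delta x$ otherwise, so $\|(\delta_{i-\frac12},\delta_{i+\frac12})\|_1\le 2\,|u''(\xi_0)|\,\Delta x^2+\O(\Delta x^3)$, which is \eqref{eq:lemma2} with $c=2+\O(\Delta x)$.

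The routine part is the Taylor bookkeeping; the point that needs real care is uniformity of the remainder, since the $\O(\Delta x)$ absorbed into $c$ must be controlled independently of $i$ and $t^n$. This is where the hypothesis $|x_i-\xi_0|\le\Delta x$ is used together with the implicit requirement that the three-cell stencil stay a fixed distance from $\Omega_d$ and that $u(\cdot,t^n)$ have uniformly bounded derivatives up to third order on $\Omega\setminus\Omega_d$; the quantity $\max_{x\in\Omega\setminus\Omega_d}|u''|$ then dominates $|u''(\xi_0)|$ as claimed. A minor subtlety is the degenerate case $u''(\xi_0)=0$, in which the leading term vanishes and the bound is governed entirely by the remainder, which causes no difficulty. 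It is also worth noting that the stated constants are the worst case over the admissible location of the cell centre: the symmetric placement $\epsilon=0$ already gives the sharper value $c=1/2$ in the Euclidean estimate and $c=1$ in the $1$-norm estimate.
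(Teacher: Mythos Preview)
Your argument is correct and rests on the same two ideas as the paper's proof: Taylor-expand the differences $\delta_{i\pm\frac12}$ about $x_i$, then use $u'(\xi_0)=0$ together with $|x_i-\xi_0|\le\Delta x$ to control the first-derivative contribution by a second-derivative term times $\Delta x$. The constants $c=\tfrac52$ and $c=2$ drop out of the same maximisation over the offset $\epsilon=x_i-\xi_0$.

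Where you differ is in organisation. The paper works throughout with the cell-average function $\bar U$ and attacks the squared $2$-norm via the algebraic identity $a^2+b^2=(a-b)^2+2ab$, which turns $\delta_{i-\frac12}^2+\delta_{i+\frac12}^2$ into a second-difference term (giving the $\tfrac12(\bar U'')^2$ contribution) plus a product of one-sided differences (giving the $2(\bar U')^2/\Delta x^2$ contribution); only then is $\bar U'(x_i)$ bounded near the extremum. Your route is more direct: you pass from cell averages to point values via the midpoint rule, obtain the explicit leading-order vector $u''(\xi_0)\,\Delta x\,(\epsilon-\tfrac12\Delta x,\;\epsilon+\tfrac12\Delta x)$, and only then take norms and maximise over $\epsilon$. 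This buys you a cleaner geometric picture, handles both the $2$- and $1$-norms uniformly, and makes the sharpness discussion (the symmetric case $\epsilon=0$ versus the extreme case $|\epsilon|=\Delta x$) immediate. The paper's decomposition, on the other hand, separates the curvature and slope contributions from the outset, which is closer in spirit to how the smoothness indicator $\eta$ is eventually used.
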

	\begin{proof}
		Let us recall the definition of $\bar{U}(x,t)$, Eq.~\eqref{eq:exSol}, 
		\begin{align*}
			\bar U(x, t) = \frac{1}{\Delta x} \int_{x-\Delta x/2}^{x+\Delta x/2} u(s,t) ds,
		\end{align*}
		where $u$ is the exact solution. The following properties hold for $\bar{U}(x,t)$
		\begin{subequations}
		\label{eq:propertiesUbar}
			\begin{align}
				\bar{U}^\prime(x,t)&=\frac{u(x+\frac{\Delta x}{2}, t)-u(x-\frac{\Delta x}{2}, t)}{\Delta x} \\
				\bar{U}^{\prime\prime}(x,t)&=\frac{u^\prime(x+\frac{\Delta x}{2}, t)-u^\prime(x-\frac{\Delta x}{2}, t)}{\Delta x}\\				
				\bar{u}_i^n &\approx \bar{U}_i^n=\bar U(x_i, t^n),
			\end{align}
		\end{subequations}	
		where $\bar U(x_i,t^n)$ is the cell average of the exact solution at time $t^n$ in cell $C_i$ with cell center $x_i$.\\
		Eq.~\eqref{eq:lemma1} can be proven regarding the following formulation of definitions \eqref{eq:deltaP}, \eqref{eq:deltaM}, with a constant $\alpha$ which is going to be specified. For the sake of simplicity we shall neglect $t^n$ for the rest of the proof.
		\begin{subequations}
	  		\label{eq:etaBdiscMVlong}
			\begin{align}
				\label{eq:startingFormulation}
				&\frac{(\bar{U}(x_i+\Delta x)-\bar{U}(x_i))^2 + (\bar{U}(x_i)-\bar{U}(x_i-\Delta x))^2}{(\alpha\,\Delta x^2)^2}= \\
				&= \frac{1}{\alpha^2}\left(\frac{\bar U(x_{i}+\Delta x)-2\bar U(x_i)+ \bar U(x_{i}-\Delta x)}{\Delta x^2}\right)^2
			    +\frac{2}{\alpha^2 \Delta x^2}\left(\frac{\bar U(x_{i}+\Delta x)-\bar U(x_i)}{\Delta x}\right)
			    \left(\frac{\bar U(x_i)-\bar U(x_{i}-\Delta x)}{\Delta x}\right)
		\intertext{a Taylor expansion of the functions $\bar U(x_{i}\pm \Delta x)$ around $x_i$ yields}
		&= \frac{1}{2}\left(\frac{\bar U^{\prime\prime}(x_i)}{\alpha}\right)^2
	    +\frac{2}{\Delta x^2}\left(\frac{\bar U^\prime(x_i)}{\alpha}\right)^2
	    + \frac{2}{3}\frac{\bar U^\prime(x_i) \bar U^{\prime\prime\prime}(x_i)}{\alpha^2}
   	    +\mathcal{O}(\Delta x^2).
	\end{align}
	\end{subequations}  	 
    In the vicinity of an extremum $\xi_0$, for $|x_i-\xi_0| \leq \Delta x$, the derivative satisfies
    \begin{subequations}
		\begin{align}
			\bar U^\prime(x_i) &= \bar U^\prime(\xi_0)+ \bar U^{\prime\prime}(\xi_0)(x_i-\xi_0)+\O((x_i-\xi_0)^2)\\
				&=\bar U^{\prime\prime}(\xi_0)(x_i-\xi_0)+\O(\Delta x^2)	\\
			\Rightarrow |\bar U^\prime(x_i)| &\leq|\bar U^{\prime\prime}(\xi_0)|\Delta x+\O(\Delta x^2).\\				
			\intertext{Since we are interested in $\left(\bar U^\prime(x_i)\right)^2$, we find that}
			\left(\bar U^\prime(x_i)\right)^2 &\leq  \left(\bar U^{\prime\prime}(\xi_0)\right)^2\Delta x^2+\O(\Delta x^3).		 \\
    \intertext{Therefore, Eq.~\eqref{eq:etaBdiscMVlong} reduces to}
  		\label{eq:etaBdiscMV}	
		&\hspace{-1.5cm}\frac{(\bar{U}(x_i+\Delta x)-\bar{U}(x_i))^2 + (\bar{U}(x_i)-\bar{U}(x_i-\Delta x))^2}{(\alpha\,\Delta x^2)^2}
		\leq \frac{1}{2}\left(\frac{\bar U^{\prime\prime}(x_i)}{\alpha}\right)^2
	    +2\left(\frac{\bar U^{\prime\prime}(\xi_0)}{\alpha}\right)^2+\mathcal{O}(\Delta x).
	\end{align}
	\end{subequations}  	 
    Consider the computational domain $\Omega$, and the set of points $\Omega_d$, where the solution is discontinuous. Setting $\alpha \equiv \max_{x_i\in\Omega\backslash \Omega_d} |\bar U^{\prime\prime}(x_i,t^n)|$ with the cell centers $x_i$, leads to
    \begin{align}
		\frac{(\bar{U}(x_i+\Delta x)-\bar{U}(x_i))^2 + (\bar{U}(x_i)-\bar{U}(x_i-\Delta x))^2}{\left(\max_{x_i} |\bar U^{\prime\prime}(x,t^n)|\Delta x^2\right)^2}		
	\leq \frac{5}{2}+\O(\Delta x).
	\end{align}
    Since the numerical solution $\bar u_i^n$ is a $3^\text{rd}$-order-accurate approximation of the true solution, i.e. $\bar u_i^n=\bar{U}(x_i,t^n)+\O(\Delta x^3)$ and 
	\begin{align*}
		\alpha =&\max_{x_i} |\bar U^{\prime\prime}(x_i,t^n)| \stackrel{\mathrm{\eqref{eq:propertiesUbar}}}\leq\max_{x_i}\max_{\xi\in(x_i-\tfrac{\Delta x}{2}, x_i+\tfrac{\Delta x}{2}) } |u^{\prime\prime}(\xi, t)|
	\end{align*}	    
    holds true, this shows Eq.~\eqref{eq:lemma1}. In a similar manner, Eq.~\eqref{eq:lemma2} can be proven.
	\end{proof}
	\begin{remark} Often, the exact value of $\max_{x} |u^{\prime\prime}(x,t^n)|$ is not known or it is too expensive to compute. In these cases, a different estimator needs to be found.
		\begin{itemize}
			\item[(a)] In many applications, one has some estimate of the largest second derivative of the solution, even if one does not know the solution itself.
			\item[(b)] If a good estimate of the solution at time $t^n$ is unavailable, one can use the initial conditions $u_0(x)$ as an approximation of $\alpha$. Note that for a conservation law of the form $u_t + f(u)_x =
0$, certain information about $u_{xx}$ can in fact be inferred from the initial data (where the solution is smooth). For instance, second derivatives are actually preserved at extremal points. To see this, consider the equations that $u_x$ and $u_{xx}$ satisfy, namely $(u_x)_t+ f'(u)(u_x)_x = -f''(u)(u_x)^2$ and $(u_{xx})_t + f'(u)(u_{xx})_x =-f'''(u)u_x^3 - 3f''(u)u_xu_{xx}$. In both equations, the left hand side represents the convective derivative along characteristics. Therefore, extremal points ($u_x=0$) are just moved with the characteristics, and $u_{xx}$ remains unchanged from its value at initial time.
		\end{itemize} 
		From now on, we will use $\alpha\equiv\max_{x\in \Omega \backslash \Omega_d} |u_0^{\prime\prime}(x)|$.
	\end{remark}
	Lemma \ref{lemma:magnitudeEta} makes a statement on the magnitude of the normalized slopes.
Note that the upper bound only depends on the grid size $\Delta x$ and the initial condition $u_0$. From this lemma we can now deduce the definition of the smoothness indicator.
	\begin{definition}\label{def:eta}
		The switch function $\eta$ which marks the limit between smooth extrema and discontinuities is defined by
		\begin{align}
		\label{eq:eta}
 			\eta=\eta(\delta_{i-\frac{1}{2}},\delta_{i+\frac{1}{2}}) = \frac{\sqrt{\delta^2_{i-1/2}+\delta^2_{i+1/2}}}{\sqrt{\frac{5}{2}}\,\alpha\,\Delta x^2}	
 		\end{align}
 		with
 		\begin{align}
 		\label{eq:alpha}
 			\alpha \equiv \max_{x \in \Omega \backslash \Omega_d} |u_{0}^{\prime\prime}(x)|
 		\end{align} 		 		
 		$\Omega$ and $\Omega_d$ defined as in Lemma \ref{lemma:magnitudeEta}.
	\end{definition}
	Note that $\alpha$ is proportional to $1/\Delta x^2$ and therefore, $\eta$ is a non-dimensional quantity, so that the overall scheme is not affected by changes of units.

	The idea of using the largest second derivative of the initial conditions to relax limiting near extrema has already been proposed by Cockburn and Shu \cite{CockburnShu1989} in the context of discontinuous Galerkin methods. They use the constant $M_2 = \max_{x} |u_{0}^{\prime\prime}(x)|$ to overcome the degeneration to first order at critical points: $M_2$ is used to estimate the magnitude of the reconstructions $u^{(-)}_{i\pm 1/2}, u^{(+)}_{i\pm 1/2}$.
If the reconstruction is smaller than a certain bound, the high order reconstruction is used, otherwise, a limiter function is applied. Note that in \cite{CockburnShu1989} the switch function is based on the reconstructed values $u^{(-)}_{i\pm 1/2}, u^{(+)}_{i\pm 1/2}$, whereas in this work, we consider the normalized slopes $\delta_{i\pm 1/2}$. 
	\newline\newline
	With Def.~\ref{def:eta}, Lemma~\ref{lemma:magnitudeEta} states that in the vicinity of smooth extrema, $\eta \leq 1$ holds. Combining this information with the new limiter function $H_{3\text{L}}$, we use this result to define the combined limiter, denoted by the superscript $(c)$,
	\begin{align}
		\label{eq:HnewComb}
			&H^{\text{(c)}}_{3\text{L}}(\delta_{i-\frac{1}{2}},\delta_{i+\frac{1}{2}}) :=
	  		\begin{cases}
		  		H_{3}(\delta_{i-\frac{1}{2}},\delta_{i+\frac{1}{2}}) \quad\qquad &\text{if}\;\eta <1  \\
	  	  		H_{3\text{L}}(\delta_{i-\frac{1}{2}},\delta_{i+\frac{1}{2}})\quad\; &\text{if}\;\eta \geq 1 .
	  		\end{cases}
	\end{align}	
	The resulting two-dimensional limiter function for this approach is shown in Fig.~\ref{fig:limiter_new_asy}. Note that in the same manner as for $H^{\text{(c)}}_\text{CT}$, Lipschitz-continuity of the combined limiter $H^{\text{(c)}}_{3\text{L}}(\delta_{i-\frac{1}{2}},\delta_{i+\frac{1}{2}})$, Eq.~\eqref{eq:HnewComb}, can be achieved by using a continuous switch function for the transition between the limited and non-limited reconstruction.
	\begin{figure}
	\centering
		\begin{subfigure}{.48\textwidth}
			\includegraphics[width=\textwidth]{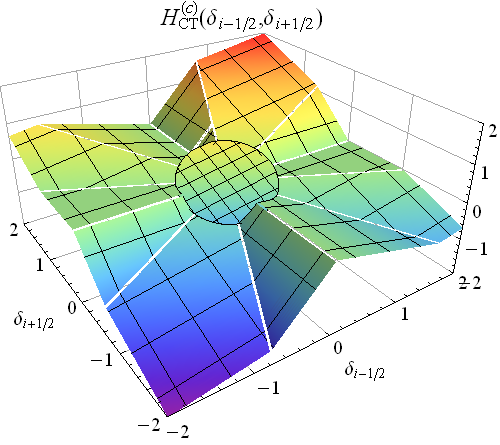}
			\caption{Combined limiter function $H^{\text{(c)}}_\text{CT}$, \cite{CadaTorrilhon2009}.}
			\label{fig:limiter_ext_asy}
		\end{subfigure}
		\begin{subfigure}{.48\textwidth}
			\includegraphics[width=\textwidth]{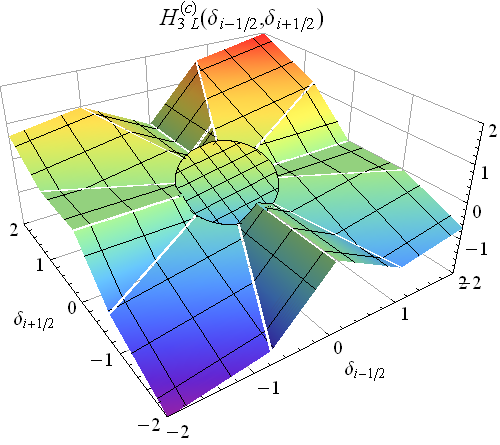}
			\caption{New combined limiter function $H^{\text{(c)}}_{3\text{L}}$.}
			\label{fig:limiter_new_asy}			
		\end{subfigure}		
		\caption{Limiter functions combined with the full-$3^\text{rd}$ order reconstruction in a region around the origin.}
		\label{fig:comparePhiCTandPhi3L}
	\end{figure}
	\begin{remark}
%
	The proposed limiter $H^{\text{(c)}}_{3\text{L}}(\delta_{i-\frac{1}{2}},\delta_{i+\frac{1}{2}})$ still satisfies properties \ref{prop:Homogeneity} to \ref{prop:LeftRight}, i.e. it is homogeneous, translationally invariant and fulfills the left-right symmetry. For the homogeneity, this can be seen by the fact that the stretched vector $(k\,u_{i-1}, k\,u_i, k\,u_{i+1})$ leads to the undivided differences $(k\,\delta_{i-\frac{1}{2}},k\,\delta_{i+\frac{1}{2}})$, which are stretched by the same factor. Also, observe how $(k\,u_{i-1}, k\,u_i, k\,u_{i+1})$ affects $\alpha$, Eq.~\eqref{eq:alpha},
	\begin{subequations}
		\begin{align}
			\alpha(k\,u_0)=& \max_{x \in \Omega \backslash \Omega_d} |k\,u_{0}^{\prime\prime}(x)|= |k|\max_{x \in \Omega \backslash \Omega_d} |u_{0}^{\prime\prime}(x)|=|k| \alpha(u_0).
		\intertext{With definition \eqref{eq:eta}, we obtain that the decision criterion is homogeneous, i.e.,}
				&\eta(k\,\delta_{i-\frac{1}{2}},k\,\delta_{i+\frac{1}{2}})=\eta(\delta_{i-\frac{1}{2}},\delta_{i+\frac{1}{2}}).\label{eq:homogenityOfEta}
		\intertext{Using the fact that $H_{3}(\delta_{i-\frac{1}{2}},\delta_{i+\frac{1}{2}})$ and $H_{3\text{L}}(\delta_{i-\frac{1}{2}},\delta_{i+\frac{1}{2}})$ are homogeneous functions, finally leads to }
			\Rightarrow &H^{\text{(c)}}_{3\text{L}}(k\,\delta_{i-\frac{1}{2}},k\,\delta_{i+\frac{1}{2}})=k\;H^{\text{(c)}}_{3\text{L}}(\delta_{i-\frac{1}{2}},\delta_{i+\frac{1}{2}}).
		\end{align}		
	\end{subequations}

	For the translational invariance, note that the shifted vector $(k\,+u_{i-1}, k\,+ u_i, k\,+u_{i+1})$ leads to the original formulation $(\delta_{i-\frac{1}{2}},\delta_{i+\frac{1}{2}})$ and does not affect $\alpha$ either.
	Contrary to $H^{\text{(c)}}_{3\text{L}}$, the limiter function $H^{\text{(c)}}_{\text{CT}}(\delta_{i-\frac{1}{2}},\delta_{i+\frac{1}{2}})$ is not homogeneous, because $\eta_\text{CT}(k\delta_{i-\frac{1}{2}},k\delta_{i+\frac{1}{2}})=k^2\eta_\text{CT}(\delta_{i-\frac{1}{2}},\delta_{i+\frac{1}{2}})\neq \eta_\text{CT}(\delta_{i-\frac{1}{2}},\delta_{i+\frac{1}{2}})$, compare to Eq.~\eqref{eq:homogenityOfEta}.
	\end{remark}
%
\section{Third order WENO}\label{sec:WENO}
In this section we briefly review the $3^\text{rd}$-order weighted essentially non-oscillatory (WENO) scheme for one-dimensional scalar conservation laws. We consider WENO schemes because they represent a very popular method for approximating the solution of hyperbolic conservation laws. Since we want to compare WENO with the limiter functions introduced in Section \ref{sec:LimO3}, we focus on WENO3. This is the family of schemes which only use three points for the reconstruction of the left and right cell-interfaces $u^{(+)}_{i-1/2}$ and $u^{(-)}_{i+1/2}$. WENO3 reconstructs an estimate of the solution $u(x_{i+\frac{1}{2}})$ from the cell averages $\bar u_{i-1}$, $\bar u_i$, and $\bar u_{i+1}$. The procedure, introduced by Jiang and Shu \cite{JiangShu1996}, is to start just as for the $r$-th-order ENO scheme \cite{HartenEngquistOsherChakravarthy1987}, where $r=2$. This is, we consider the two-point stencils $S_0=C_{i-1} \cup C_i$ and $S_1=C_i\cup C_{i+1}$ and define on each stencil $2^\text{nd}$-order accurate approximations:
	\begin{itemize}
		\item $p_0(x_{i+\frac{1}{2}}) = -\frac{1}{2}u_{i-1}+\frac{3}{2}u_i$, where $p_0$ is the linear interpolation of $(x_{i-1}, \bar u_{i-1})$ and $(x_i, \bar u_i)$ and
		\item $p_1(x_{i+\frac{1}{2}}) = \phantom{-}\frac{1}{2}u_i+\frac{1}{2}u_{i+1}$, where $p_1$ is the linear interpolation of $(x_i, \bar u_i)$ and $(x_{i+1}, \bar u_{i+1})$.
	\end{itemize}
	For the sake of simplicity, we only consider the procedure for the right cell interface $x_{i+\frac{1}{2}}$. The left interface follows in a similar manner. \newline
	Based on an $r$-th order ENO scheme, the best one can get is a $(2r-1)$-th order WENO scheme, i.e. in our case $3^\text{rd}$ order \cite{JiangShu1996}. To obtain this, the WENO3 estimate of $u(x_{i+\frac{1}{2}})$, called $\hat u_{i+\frac{1}{2}}$, is defined as a convex combination of the two $2^\text{nd}$-order estimates $p_0(x_{i-\frac{1}{2}})$ and $p_1(x_{i+\frac{1}{2}})$:
	\begin{align}
		\label{eq:weightedFormula}
		\hat u_{i+\frac{1}{2}}= w_{i-\frac{1}{2}}p_0(x_{i+\frac{1}{2}})+w_{i+\frac{1}{2}}p_1(x_{i+\frac{1}{2}}).
	\end{align}
	The weights satisfy $w_{i\pm\frac{1}{2}} \geq 0$ and $w_{i-\frac{1}{2}}+w_{i+\frac{1}{2}}=1$. There is a particular choice of weights that generates a $3^\text{rd}$-order-accurate approximation to $u(x_{i+\frac{1}{2}})$, namely $w_{i-\frac{1}{2}} = \gamma_{i-\frac{1}{2}} = \frac{1}{3}$, and $w_{i+\frac{1}{2}} = \gamma_{i+\frac{1}{2}} = \frac{2}{3}$. This approximation is obtained when interpolating $(x_{i-1},\bar u_{i-1})$, $(x_i, \bar u_i)$, and $(x_{i+1}, \bar u_{i+1})$ by a quadratic polynomial, and evaluating it at $x_{i+\frac{1}{2}}$, see Eq.~\eqref{eq:thirdOrderPoly}.

	The philosophy of WENO is to design formulas for the weights $w_{i\pm\frac{1}{2}}$, such that in smooth regions, one has $w_{i\pm\frac{1}{2}}\approx\gamma_{i\pm\frac{1}{2}}$, while in regions of large gradients, more weight is given to the approximation that generates fewer spurious oscillations. This is achieved using smoothness indicators $\beta_{i\pm 1/2}$. They are defined using the normalized slopes between neighboring cells $\delta_{i\pm\frac{1}{2}}$:
	\begin{equation*}
		\beta_{i\pm\frac{1}{2}} = (\delta_{i\pm\frac{1}{2}})^2.
	\end{equation*}
	The final weights $w_{i\pm\frac{1}{2}}$ are defined by
	\begin{equation}
		\label{eq:weno_w}
		w_{i\pm\frac{1}{2}} = \frac{\alpha_{i\pm\frac{1}{2}}}{\alpha_{i-\frac{1}{2}}+\alpha_{i+\frac{1}{2}}}\;,
	\end{equation}
	where
	\begin{equation}
		\label{eq:weno_alpha}
		\alpha_{i\pm\frac{1}{2}} = \frac{\gamma_{i\pm\frac{1}{2}}}{(\varepsilon+\beta_{i\pm\frac{1}{2}})^p}\;.
	\end{equation}
	In equation \eqref{eq:weno_alpha}, there are two parameters which need to be further detailed. The integer $p\in\mathbb{N}$, which Liu et at. \cite{LiuOsherChan1994} suggest to set $p = r$, the order of the base ENO scheme. Jiang and Shu \cite{JiangShu1996} claim that for $r=2,3$, setting $p$ to $2$ is adequate. In this work, we will set $p=2$. 
	
	The other parameter in Eq.~\eqref{eq:weno_alpha} is $\varepsilon$, a small positive number, originally introduced to avoid the division by zero \cite{JiangShu1996}. We suggest that rather than fixing $\varepsilon$ to some constant, it should depend on the spacial discretization $\Delta x$. This will be discussed in more detail in Sec.~\ref{subsec:WENOsmoothness}.
	%
\subsection{Interpretation of WENO3 in 2D Slope Domain}
	%
	A brief calculation yields for the WENO3 reconstruction
	\begin{align*}
		\hat u_{i+\frac{1}{2}}
		&=
		w_{i-\frac{1}{2}}p_0(x_{i+\frac{1}{2}})+w_{i+\frac{1}{2}}p_1(x_{i+\frac{1}{2}}) \\
		&=
		w_{i-\frac{1}{2}} \prn{-\tfrac{1}{2}\bar u_{i-1}+\tfrac{3}{2}\bar u_i}
		+w_{i+\frac{1}{2}}\prn{\tfrac{1}{2}\bar u_i+\tfrac{1}{2}\bar u_{i+1}} \\
		&=
		\bar u_i+\frac{1}{2}\prn{w_{i-\frac{1}{2}}(\bar u_i-\bar u_{i-1})+w_{i+\frac{1}{2}}(\bar u_{i+1}-\bar u_i)} \\
		&=
		\bar u_i+\frac{1}{2}\prn{w_{i-\frac{1}{2}}\delta_{i-\frac{1}{2}}
		+w_{i+\frac{1}{2}}\delta_{i+\frac{1}{2}}}\;.
	\end{align*}
	Since the weights $w_{i\pm\frac{1}{2}}$ themselves only depend on the slopes $\delta_{i\pm\frac{1}{2}}$, the WENO3 estimate can be written in the form \eqref{eq:update_finite_volume_2d}
	with the limiter function $H_\text{WENO3}$, whose particular form depends on the parameters $p$ and $\varepsilon$. Explicitly written, it is
	\begin{equation}
		\label{eq:weno3}
		H_\text{WENO3}(\delta_{i-\frac{1}{2}},\delta_{i+\frac{1}{2}}) =
		\frac{
		\frac{1}{3}(\varepsilon+(\delta_{i-\frac{1}{2}})^2)^{-p}\delta_{i-\frac{1}{2}}
		+\frac{2}{3}(\varepsilon+(\delta_{i+\frac{1}{2}})^2)^{-p}\delta_{i+\frac{1}{2}}
		}{
		\frac{1}{3}(\varepsilon+(\delta_{i-\frac{1}{2}})^2)^{-p}
		+\frac{2}{3}(\varepsilon+(\delta_{i+\frac{1}{2}})^2)^{-p}
		}\;.
	\end{equation}
	%
In the vicinity of $(0,0)$, i.e.~for $|\delta_{i\pm\frac{1}{2}}|\ll\varepsilon$, one has in leading order that
	\begin{equation}
	\label{eq:HwenoSmallDeltas}
		H_\ll(\delta_{i-\frac{1}{2}},\delta_{i+\frac{1}{2}}) =
		\tfrac{1}{3}\delta_{i-\frac{1}{2}}+\tfrac{2}{3}\delta_{i+\frac{1}{2}}\;.
	\end{equation}
	As mentioned before, this linear function is homogeneous and results in a $3^\text{rd}$-order-accurate approximation. On the other side of the spectrum, if $|\delta_{i\pm\frac{1}{2}}|\gg\varepsilon$,
	one has in leading order that
	\begin{equation}
		\label{eq:HwenoBigDeltas}
		H_\gg(\delta_{i-\frac{1}{2}},\delta_{i+\frac{1}{2}}) =
		\frac{
		\frac{1}{3}(\delta_{i-\frac{1}{2}})^{1-2p}
		+\frac{2}{3}(\delta_{i+\frac{1}{2}})^{1-2p}
		}{
		\frac{1}{3}(\delta_{i-\frac{1}{2}})^{-2p}
		+\frac{2}{3}(\delta_{i+\frac{1}{2}})^{-2p}
		}\;.
	\end{equation}
	This function is also homogeneous, i.e. $H_\gg(k\delta_{i-\frac{1}{2}},k\delta_{i+\frac{1}{2}}) = k\,H_\gg(\delta_{i-\frac{1}{2}},\delta_{i+\frac{1}{2}})$. That means, along each line through the origin in the $(\delta_{i-\frac{1}{2}},\delta_{i+\frac{1}{2}})$ plane, it is linear, thus resembling the behavior of traditional FV limiters.
\subsection{WENO Smoothness Indicators}\label{subsec:WENOsmoothness}
	The choice of the weights $\omega_{i\pm 1/2}$, which depend on $\alpha_{i\pm 1/2}$, is crucial for the order of accuracy of the resulting scheme. Its precise value influences the behavior of the limiting when $\beta_{i\pm 1/2}$ is close to zero. Clearly, this is of particular importance near extremal points of the solution. To point out the importance of $\varepsilon$, we repeated the numerical experiment conducted in Sec.~\ref{subsec:smoothnessIndicator} with the $3^\text{rd}$-order WENO scheme. This is, we apply $H_\text{WENO3}$ to the advection equation $u_t+u_x=0$ with smooth initial condition $u_0(x)=\sin(\pi x)$ for different values of $\varepsilon$. The result for $t_\text{end}=1$ and CFL number $\nu=0.9$ is depicted in Fig.~\ref{fig:WENOerror}.
	\begin{figure}[t]
		\centering
		\begin{subfigure}	{.49\textwidth}
			\includegraphics[width=\textwidth]{./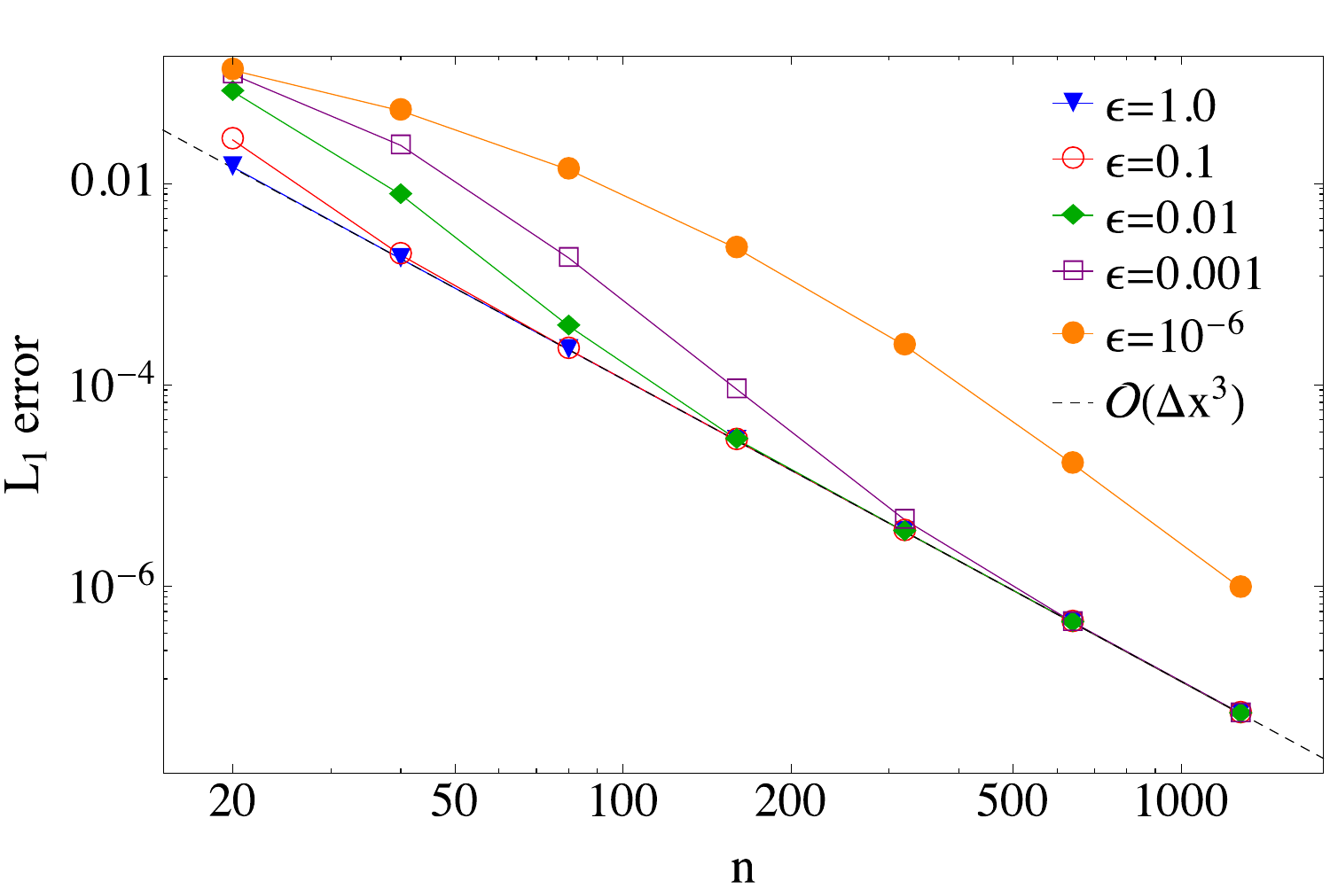}
		\end{subfigure}				
		\begin{subfigure}{.49\textwidth}
			\includegraphics[width=\textwidth]{./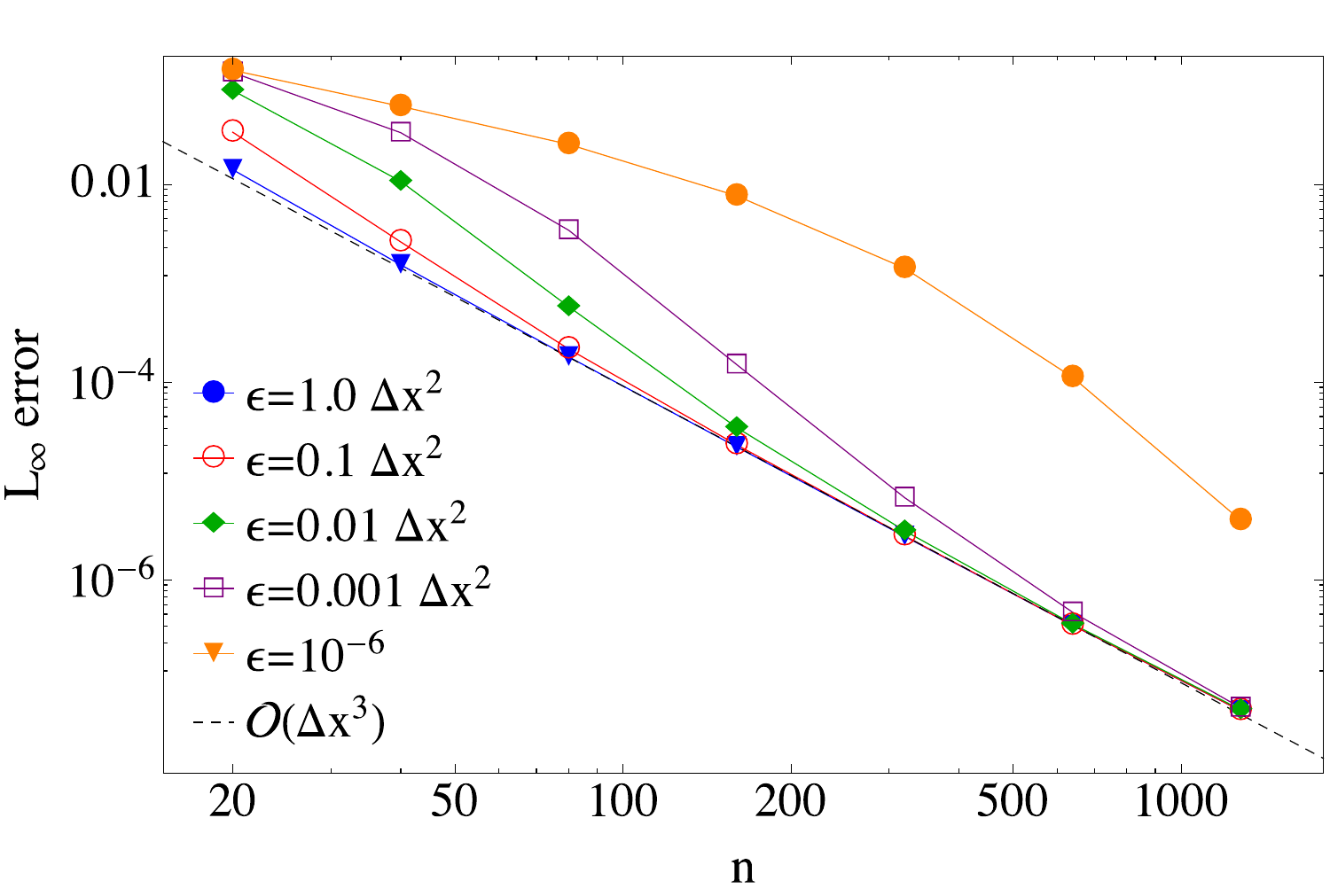}
		\end{subfigure}				
		\caption{Double logarithmic plot of the $L_1$- and $L_\infty$-errors versus number of grid cells of the solution obtained with $H_\text{WENO3}$ advected until $t_\text{end}=1$ with CFL number $\nu=0.9$. Numerical solution for different values of $\varepsilon$, see Eq.~\eqref{eq:weno3} and \eqref{eq:weno_alpha}. Left: $L_1$-error, right: $L_\infty$-error.}
		\label{fig:WENOerror}
	\end{figure}
	There is a strong similarity between this test case and the one shown in Fig.~\ref{fig:cadaL1}. This resemblance and also comparing the form of $\alpha_{i\pm 1/2}$,  Eq.~\eqref{eq:weno_alpha}, with the new FV smoothness indicator $\eta$, Eq.~\eqref{eq:eta}, strongly suggests to take a closer look at $\varepsilon$. In recent years, this parameter has attracted a lot of attention, see e.g. \cite{YamaleevCarpenter2009}, \cite{Arandiga2011}, \cite{ArandigaMarti2014}, \cite{Kolb2014} and references therein. Originally, $\varepsilon$ was introduced by Jiang and Shu to avoid the denominator to become zero \cite{JiangShu1996}. The authors called it a small positive number and set to $\varepsilon=10^{-6}$ for their test-case studies. Therefore, WENO3 with the fixed value $\varepsilon=10^{-6}$ will be called WENO-JS henceforth. There are several drawbacks by fixing $\varepsilon$ to some value $\varepsilon_0$. One of them is the following scenario which might occur since $\delta_{i\pm\frac{1}{2}} = \mathcal{O}(\Delta x)$ in smooth parts of the solution:
	\begin{enumerate}
		\item For large grid sizes, $|\delta_{i\pm\frac{1}{2}}|\gg \varepsilon_0$ holds, leading to the homogeneous function $H_\text{WENO3} \to H_\gg(\delta_{i-\frac{1}{2}},\delta_{i+\frac{1}{2}})$, Eq.~\eqref{eq:HwenoBigDeltas}, and thus to low order.
		\item Refining the grid leads to $|\delta_{i\pm\frac{1}{2}}|\ll \varepsilon_0$, and yields $H_\text{WENO3} \to H_\ll(\delta_{i-\frac{1}{2}},\delta_{i+\frac{1}{2}})$, Eq.~\eqref{eq:HwenoSmallDeltas}, which is the full-$3^\text{rd}$ order reconstruction.
	\end{enumerate}
	This phenomenon is also demonstrated in Fig.~\ref{fig:WENOerror}.
	Wanting $\varepsilon$ to be adaptive for all grid discretizations means that it has to depend on the grid size, i.e. $\varepsilon = \varepsilon(\Delta x)$. This is also what Yamaleev and Carpenter claim in \cite{YamaleevCarpenter2009, YamaleevCarpenter2009_2}. However, the authors do not simply replace $\varepsilon$ by a function depending on $\Delta x$, they rather define new weight functions
	\begin{subequations}
		\label{eq:weightsYC}
		\begin{align}
			&w_{k} = \frac{\alpha_{k}}{\sum_{i=0}^{r-1}\alpha_{i}}\;,\quad \alpha_{k}=\gamma_{k}\left(1+\frac{\tau}{\varepsilon+\beta_{k}}\right),\quad k=0,\ldots,r-1,\\
			&\varepsilon=\max_{x\not\in\Omega_d}(\|u_0^2\|_1, \|(u_0^\prime)^2\|_1,\ldots,\|(u_0^{(r-1)})^2\|_1)\Delta x^2,
			\label{eq:epsYC}
		\end{align}
	\end{subequations}
	%
	where $\Omega_d$ is a set of points where the initial condition is discontinuous. In this paper we consider $r=2$. Thus, $\gamma_0=\gamma_{i-1/2}, \gamma_1=\gamma_{i+1/2}$, etc. are defined as before and $\tau$ is the square of the undivided difference on the entire stencil. In case of the compact three-point stencil this is $\tau = (\bar u_{i+1}-2 \bar u_i + \bar u_{i-1})^2=(\delta_{i+\frac{1}{2}}-\delta_{i-\frac{1}{2}})^2$. Note that the definition of $\varepsilon$ is not invariant under translation of the initial condition $u_0$. This leads to a different limiting behavior even though $u_0$ may simply be shifted by a constant, cf. Sec.~\ref{subsec:discIC}.
	
	Ar{\'a}ndiga et al. \cite{ArandigaMarti2014} show that using the weights proposed by Yamaleev and Carpenter, the resulting scheme reaches the maximal order $(2 r-1)$ for sufficiently smooth solutions. However, near discontinuities, the scheme achieves order of accuracy $\mathcal{O}(\Delta x^2)$ which is worse or equal to $\mathcal{O}(\Delta x^r),\;r\geq2$, the order of accuracy of the underlying ENO scheme. Ar{\'a}ndiga et al. report to have fixed this issue by slightly changing the weight functions using
	\begin{subequations}
		\label{eq:weightsAMM}
		\begin{align}
		&\alpha_{k}=\gamma_{k}\left(1+\left(\frac{\tau}{\varepsilon+\beta_{k}}\right)^\mu\right),\quad k=0,\ldots,r-1,\quad\mu=\left\lceil\frac{r}{2}\right\rceil \\
		&\varepsilon=K\,\Delta x^q,\;\,\text{with}\; K>0,\;q\in\mathbb{N},\; q\leq 4\,r-4-r/\mu.
		\end{align}
	\end{subequations}	
	In the case of the three-point stencil, $\mu=1$ and therefore the weight formulation remains the same. In the numerical test cases carried out in \cite{ArandigaMarti2014}, $K$ is set to $1$, which makes $\varepsilon$ a dimensional quantity that might be affected by changes of units. We will not consider these weight functions in our numerical experiments in Sec.~\ref{sec:results} since the definition of $\varepsilon$ in Eq.~\eqref{eq:epsYC} is clearly more elaborate. Solely in Sec.~\ref{subsec:difficultIC} we compare the obtained results with the scheme setting $K=1$, i.e. $\varepsilon=\Delta x^2$ to show the importance of the definition of $\varepsilon$.
	%
\section{A Unifying View}\label{sec:unifying}	
In this section we want to place the different methods in a unifying view to point out their differences and especially their similarities.

	For the sake of simplicity, we do not discuss the three-dimensional plots of the limiter functions but rather sectional views at fixed values of $\delta_{i+\frac{1}{2}}$. This means, the limiter functions are depicted with a one-dimensional dependence on $\delta_{i-\frac{1}{2}}$. Lemma \ref{lemma:equivLimiters} states that for $\delta_{i+\frac{1}{2}}=1$ this is equivalent to the standard $(\theta,\phi(\theta))$-plots such as Fig.~\ref{fig:ASandCTlimiter}, and found in textbooks. 
	
	To give an overview, Fig.~ \ref{fig:unifyingView} shows the limiter functions treated in the previous sections for $\delta_{i+\frac{1}{2}}\in \lbrace 2, 1, 0.5, 0.1 \rbrace$.
	\begin{figure}[ht]
		\centering
		\begin{subfigure}{0.49\textwidth}
			\includegraphics[width=\textwidth]{./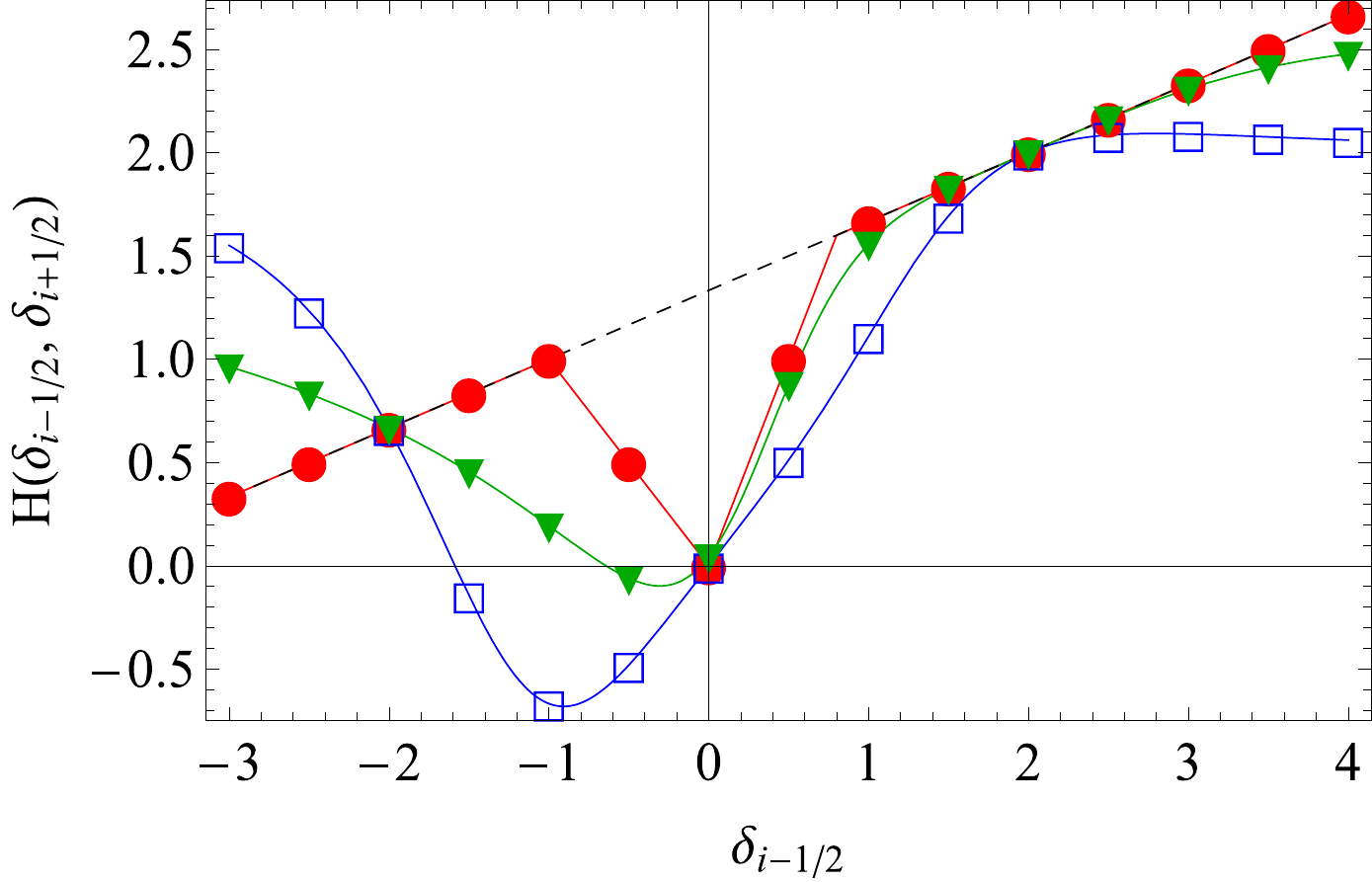}
			\caption{$\delta_{i+\frac{1}{2}}=2$.}
			\label{fig:deltap=2}
		\end{subfigure}
		\hfill
		\begin{subfigure}{0.49\textwidth}
			\includegraphics[width=\textwidth]{./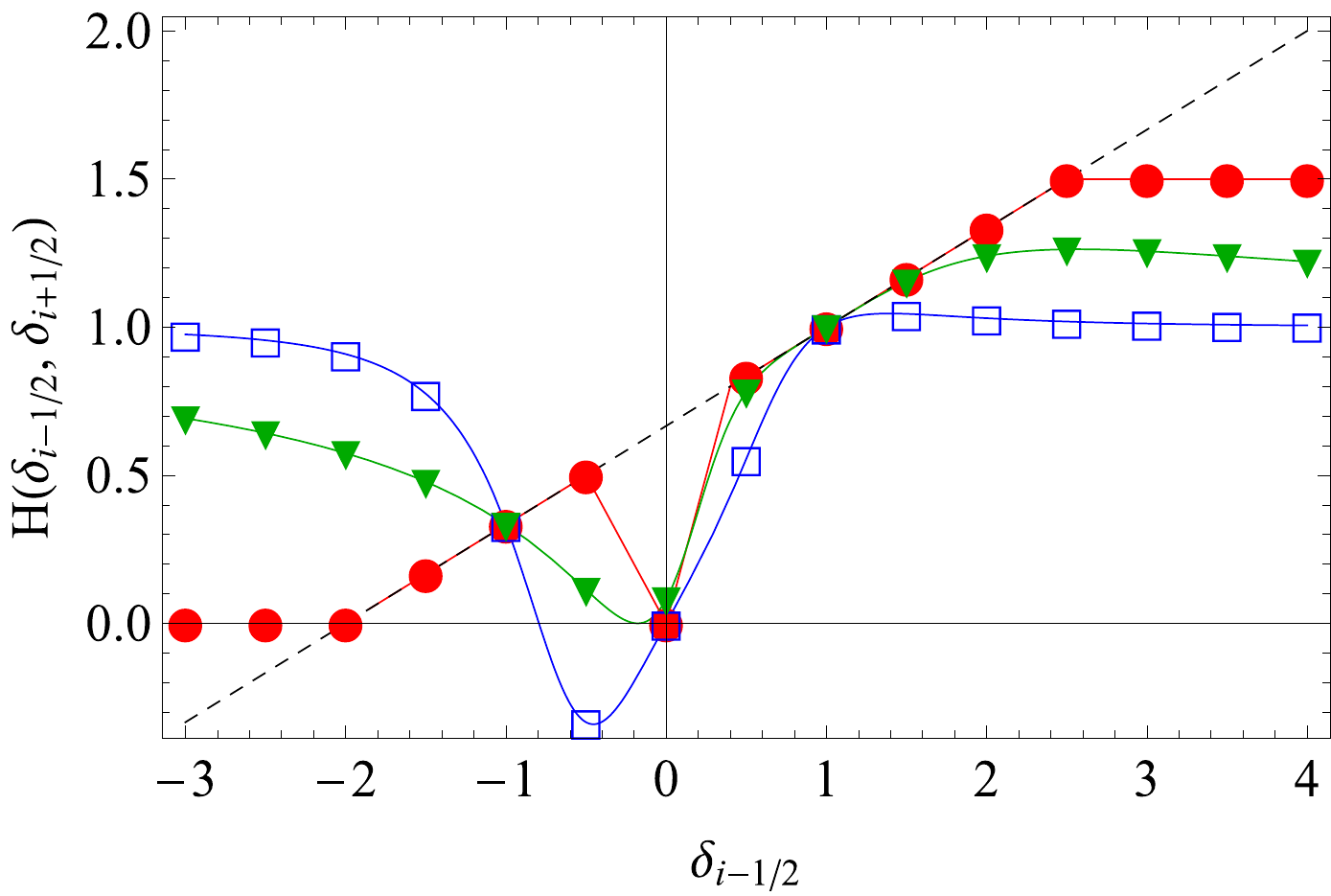}
			\caption{$\delta_{i+\frac{1}{2}}=1$.}
			\label{fig:deltap=1}
		\end{subfigure}

		\begin{subfigure}{0.49\textwidth}
			\includegraphics[width=\textwidth]{./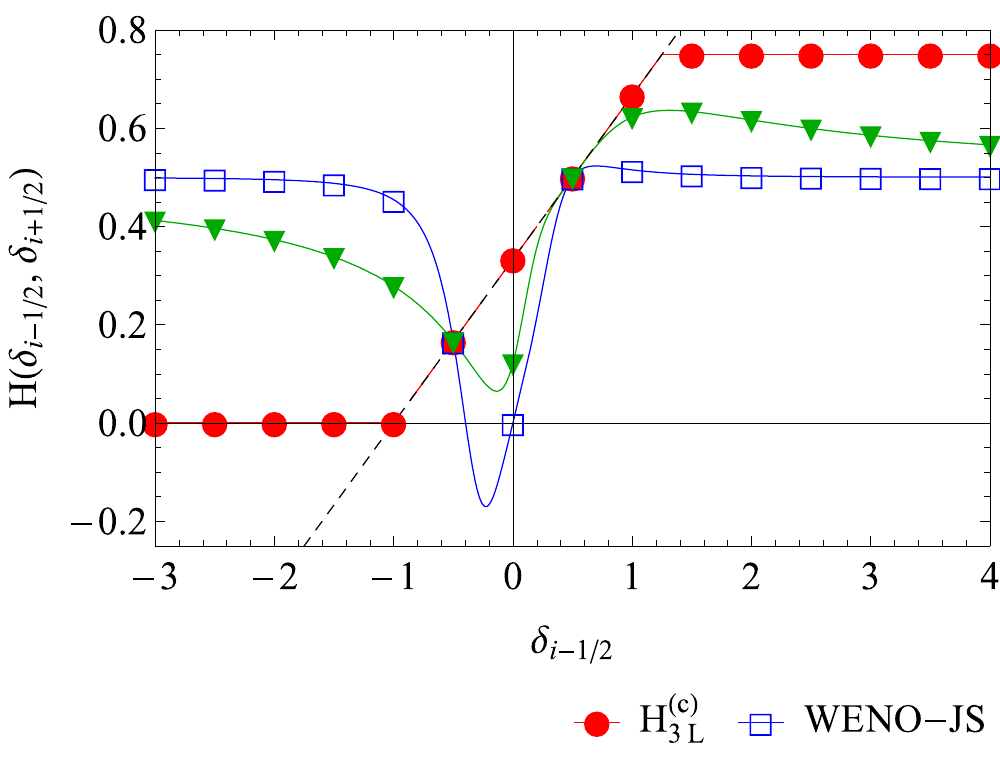}
			\caption{$\delta_{i+\frac{1}{2}}=0.5$.}
			\label{fig:deltap=0p5}
		\end{subfigure}
		\hfill
		\begin{subfigure}{0.49\textwidth}
			\includegraphics[width=\textwidth]{./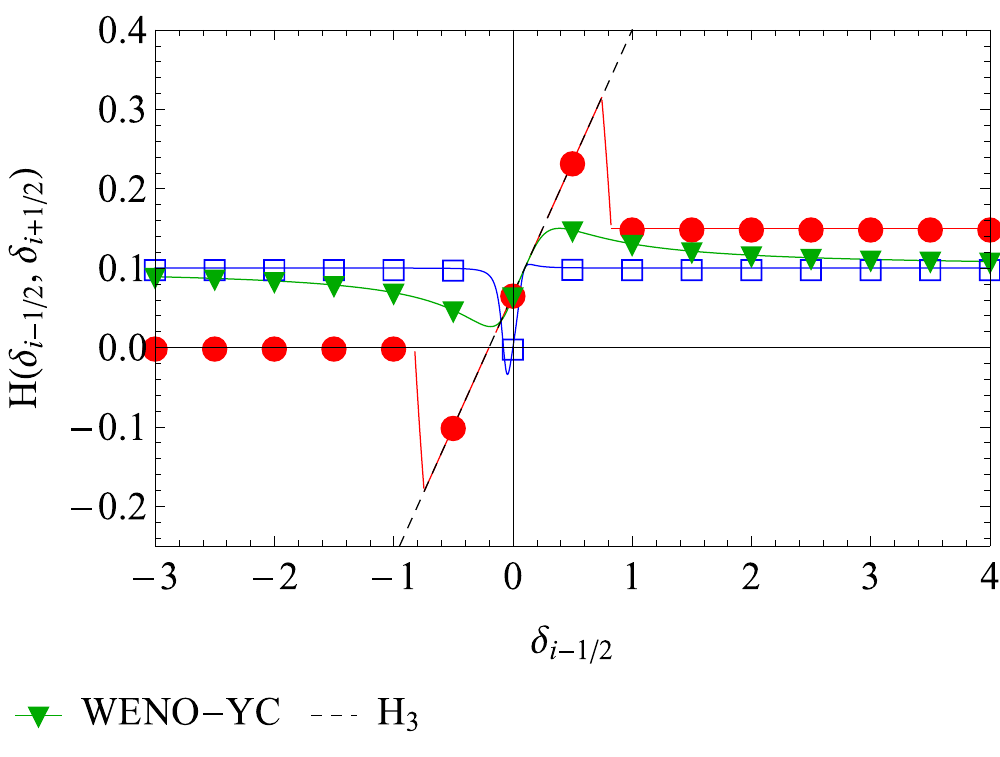}
			\caption{$\delta_{i+\frac{1}{2}}=0.1$.}
			\label{fig:deltap=0p1}
		\end{subfigure}
		\caption{Sectional view of different limiter functions for the fixed values $\delta_{i+\frac{1}{2}}\in \lbrace 2, 1, 0.5, 0.1 \rbrace$.}
		\label{fig:unifyingView}
	\end{figure}
	In Fig.~\ref{fig:unifyingView} we can nicely see that all limiter functions satisfy Property \ref{prop:generalProperties} (i). This is, for $\theta=1$, i.e.~$\delta_{i-\frac{1}{2}}=\delta_{i+\frac{1}{2}}$, the limiter functions continuously go through the point $(\delta_{i+\frac{1}{2}}, \delta_{i+\frac{1}{2}})$, which corresponds to $\phi(\theta)=1$. This situation represents smooth parts of the solution. It can be seen that WENO-JS has slope $\frac{1}{3}$  only close to the point $\delta_{i-\frac{1}{2}}=\delta_{i+\frac{1}{2}}$ meaning that as soon as the slopes (of same sign) differ from each other, the limiting takes effect, even though the function is still smooth. The smaller the values of $\delta_{i\pm 1/2}$, the stronger is this effect, see the trend from Fig.~\ref{fig:deltap=2} to \ref{fig:deltap=0p1}. 
	
	For $\delta_{i-\frac{1}{2}}\to\pm \infty$, i.e. $\theta\to\pm \infty$, all considered functions, except $H_3$, tend to a constant. For the WENO schemes, this constant is non-zero, contrary to the negative part of the FV limiter. However, for the reconstruction of the cell interface values, $H(\delta_{i-\frac{1}{2}},\delta_{i+\frac{1}{2}})\equiv \text{const.}$ leads to first-order accuracy, independent of the value of the constant. In Fig.~\ref{fig:deltap=0p5} and \ref{fig:deltap=0p1} we can see the switch of the FV limiter to the full-$3^\text{rd}$ order reconstruction, highlighted by the dashed black line. Clearly, the construction of the transition zone is rather ad-hoc for the FV limiter, while it comes more naturally in WENO3. However, the FV setup allows a much more systematic control about the shape of the function $H$ far away from the origin, particularly in the regions where $\delta_{i-\frac{1}{2}}$ and $\delta_{i+\frac{1}{2}}$ have opposite signs. It is here that the FV and WENO limiters show a very different behavior. In the point $\delta_{i-\frac{1}{2}}=-\delta_{i+\frac{1}{2}}$, i.e. at extrema, all proposed limiters equal $H_3$. However, while $H^{\text{(c)}}_{3\text{L}}$ has the same slope as the full-$3^\text{rd}$ order reconstruction, the WENO limiters have negative slopes. This indicates that the FV limiter reconstructs extrema with higher accuracy, since in general extremal points might not lie exactly at $\delta_{i-\frac{1}{2}}=-\delta_{i+\frac{1}{2}}$ but rather $\delta_{i-\frac{1}{2}}\approx -\delta_{i+\frac{1}{2}}$. In these cases, the new limiter function approximates the full-$3^\text{rd}$-order reconstruction, leading to higher order solutions.

Another interesting observation is the nature of $H^{\text{(c)}}_{3\text{L}}$ and WENO-YC at $\delta_{i-\frac{1}{2}}=0$. When considering the univariate limiter functions $\phi(\theta)$ with $\theta=\delta_{i-\frac{1}{2}}/\delta_{i+\frac{1}{2}}$, the point $\delta_{i-\frac{1}{2}}=0$ is equivalent to $\theta=0$. In this case, the conventional FV limiter functions in the MUSCL framework are set to $0$, yielding a first-order reconstruction. This is also the case for WENO-JS, whereas the limiters proposed in this work and by Yamaleev and Carpenter yield non-zero contributions for small $\delta_{i+\frac{1}{2}}$, see Fig.~\ref{fig:deltap=1}, \ref{fig:deltap=0p5}, and \ref{fig:deltap=0p1}. This is exactly the desired behavior close to extrema where one slope may be zero and the other slope small but non-zero. Further away from the origin in the $(\delta_{i-\frac{1}{2}}, \delta_{i+\frac{1}{2}})$-plane, it is clear that the limiter functions should yield $0$ for $\delta_{i-\frac{1}{2}}=0$ because this situation may correspond to a discontinuity. This feature can be observed in Fig.~\ref{fig:deltap=2}.

\section{Numerical Results}\label{sec:results}
In this section, we apply and compare the different limiter functions discussed in the previous sections. In all test problems proposed in this work, we compare
\begin{enumerate}
	\item the original WENO3 scheme as introduced in \cite{JiangShu1996}, i.e. fixing $\varepsilon=10^{-6}$, called WENO-JS;
	\item WENO3 with the weight functions proposed by Yamaleev and Carpenter \cite{YamaleevCarpenter2009}, called WENO-YC; 
	\item the full $3^{\text{rd}}$-order reconstruction $H_{3}$, Eq.~\eqref{eq:3rdOrder2param}; and
	\item the FV limiter function $H^{\text{(c)}}_{3\text{L}}$, as introduced in Sec.~\ref{sec:LimO3}.
\end{enumerate}
The time derivative is approximated using the $3^\text{rd}$-order Runge-Kutta method as described in \cite{ShuOsher1988}.

For all test cases, rather than presenting tables with errors and convergence rates, we plot the $L_1$- and $L_\infty$-errors. In these plots, we include reference slopes of the design order of accuracy, i.e. the order of accuracy the schemes obtain in theory.
\subsection{Preliminary Test Case}\label{subsec:preliminaryTest}
%
	\begin{figure}[t]
		\centering
		\includegraphics[scale=0.57]{./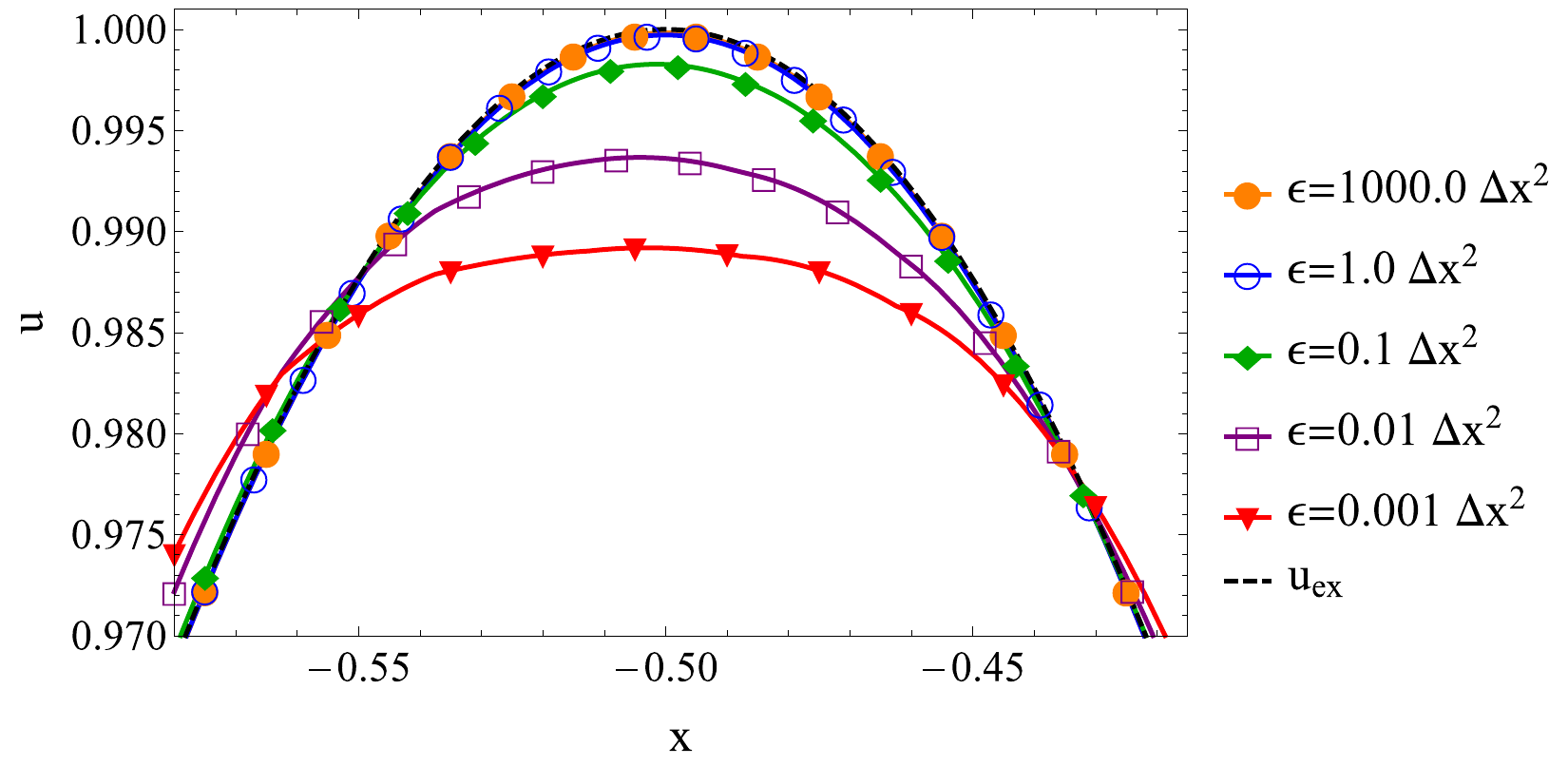}
		\caption{Zoom of the solution of the preliminary test case, Eq.~\eqref{eq:preliminaryTest}. This has been obtained with the WENO-YC scheme, $\varepsilon=C\,\Delta x^2,\; C\in\lbrace 10^3, 1.0, 10^{-1},10^{-2},10^{-3}\rbrace$ with $n=80$ grid cells and $t_\text{end}=1.0$.}
		\label{fig:zoomWENOYCdiffEps}
	\end{figure}
	In this test, we want to point out the importance of the choice of $\varepsilon$ for WENO-YC as we have done for $H^{\text{(c)}}_\text{CT}$ and WENO-JS in Sec.~\ref{subsec:smoothnessIndicator} and \ref{subsec:WENOsmoothness}, respectively. We set $\varepsilon=C\,\Delta x^2$. According to Eq.~\eqref{eq:weightsYC}, $C=\max_{x\not\in\Omega_d}(\|u_0^2\|, \|(u_0^\prime)^2\|)$. However, in this test case we set $C\in\lbrace 10^3, 1.0, 10^{-1},10^{-2},10^{-3}\rbrace$ to study the influence of the coefficient. The results for the repetition of the test
	%
	\begin{subequations}
		\label{eq:preliminaryTest}
		\begin{align}			
			u_t+u_x &=0\\
			u_0(x)&=\sin(\pi x),\quad x\in [-1,1]
		\end{align}
	\end{subequations}	
	with $n=80$ cells and $t_\text{end}=1.0$ is depicted in Fig.~\ref{fig:zoomWENOYCdiffEps} for different values of $\varepsilon$. One can clearly see the loss in accuracy if the constant for $\varepsilon$ is chosen too small. Note that the correct value for $\varepsilon$, as proposed by Yamaleev and Carpenter \cite{YamaleevCarpenter2009}, is $\varepsilon=\max(1,\,\pi^2)\;\Delta x^2=\pi^2\;\Delta x^2$. This indicates that for smooth functions a coefficient which is too small decreases the quality of the approximation. This can also be observed in Fig.~\ref{fig:WENOYCerror} which shows the $L_1$- and $L_\infty$-errors of the solution. We can see that the solution with $\varepsilon=1000\;\Delta x^2$ is $3^\text{rd}$-order-accurate in both norms starting at the lowest resolution. While the solution with $\varepsilon=1.0\;\Delta x^2$ is still directly $3^\text{rd}$-order-accurate in the $L_1$-norm starting from $n=20$ grid cells, we observe that the smaller the coefficient of $\varepsilon$, the larger $n$ must be chosen so that $3^\text{rd}$ order is achieved.
	
	Of course, the same conclusion holds true for the choice of $\alpha$ in $H^{\text{(c)}}_{3\text{L}}$. This test thus shows that we need to carefully evaluate Eq.~\eqref{eq:weightsYC} and \eqref{eq:eta}. A misinterpretation of the coefficients may lead to results which are significantly worse than what the scheme is capable to achieve.
	\begin{figure}[h!]
		\centering
		\begin{subfigure}{.49\textwidth}
			\includegraphics[width=\textwidth]{./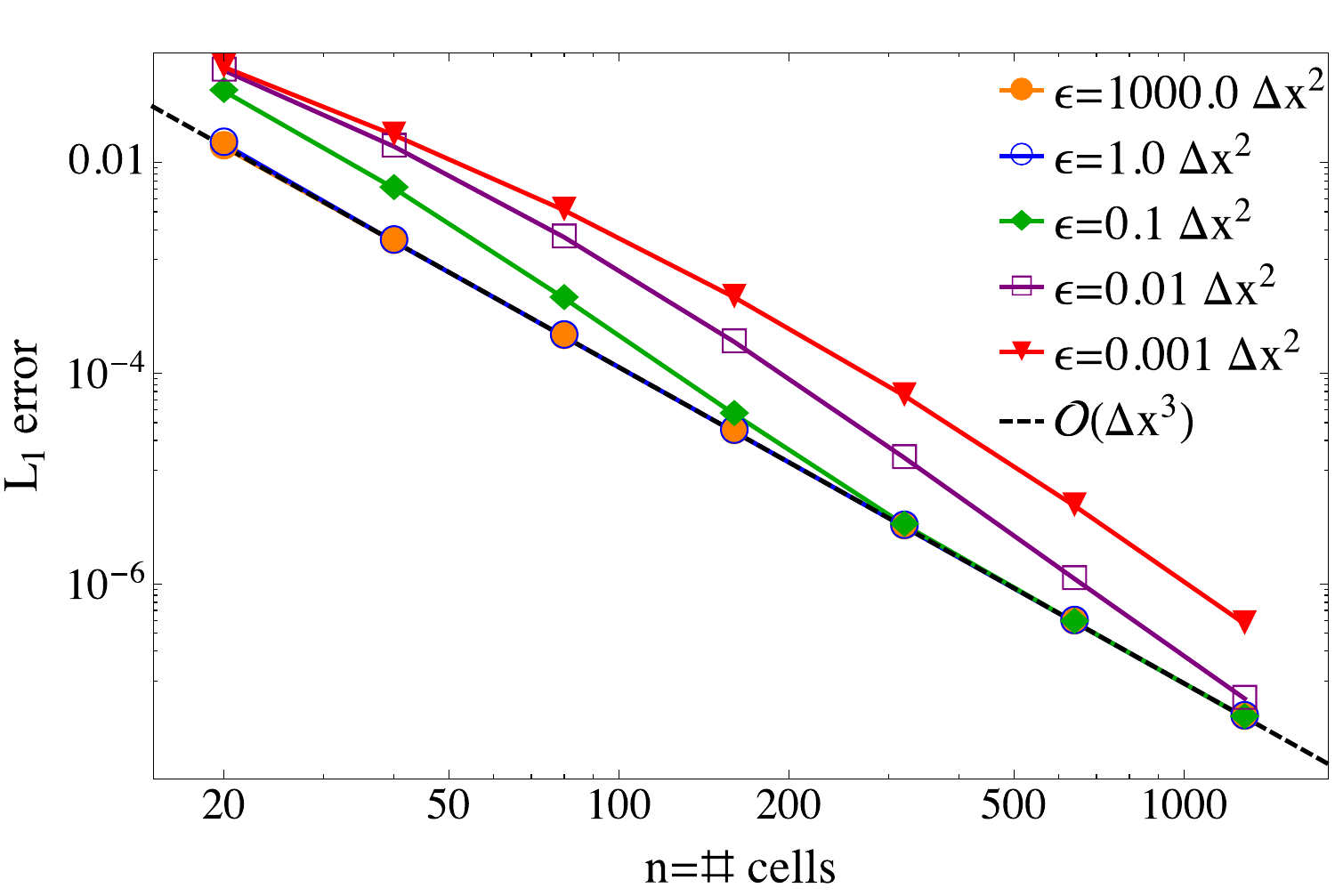}
		\end{subfigure}				
		\hfill
		\begin{subfigure}{.49\textwidth}
			\includegraphics[width=\textwidth]{./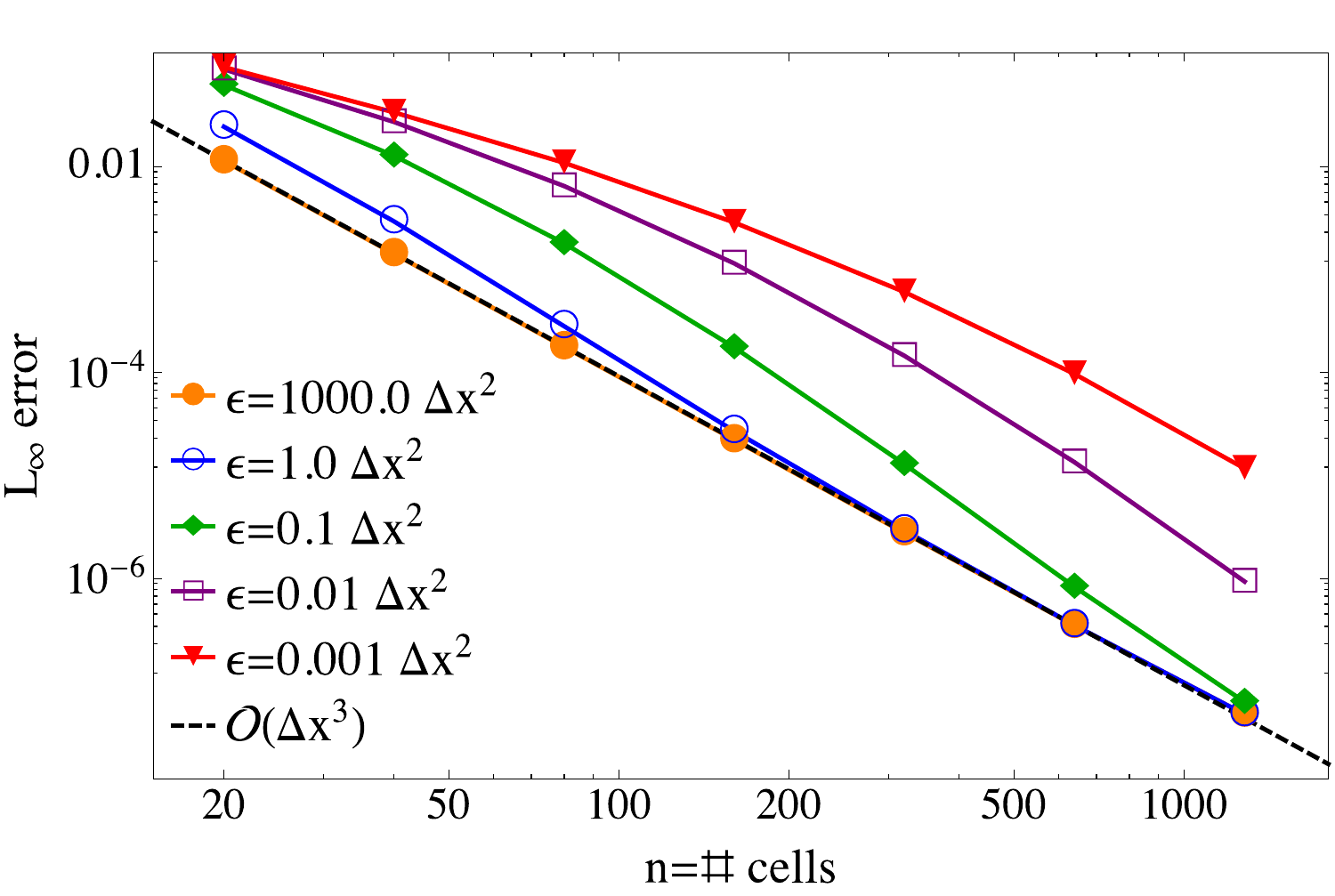}
		\end{subfigure}				
		\caption{Double logarithmic plot of the $L_1$- and $L_\infty$-errors versus number of grid cells of the solution obtained with $H_\text{WENO-YC}$, advected until $t_\text{end}=1$ with CFL number $\nu=0.9$. Numerical solution for different values of $\varepsilon=C\,\Delta x^2,\; C\in\lbrace 10^3, 1.0, 10^{-1},10^{-2},10^{-3}\rbrace$. Left: $L_1$-error, right: $L_\infty$-error.}
		\label{fig:WENOYCerror}
	\end{figure}
	%
\subsection{Advection Equation with Smooth Initial Condition}\label{subsec:smoothIC}
%
	\begin{figure}[t]
		\centering
		\begin{subfigure}{0.48\textwidth}
		\includegraphics[width=\textwidth]{./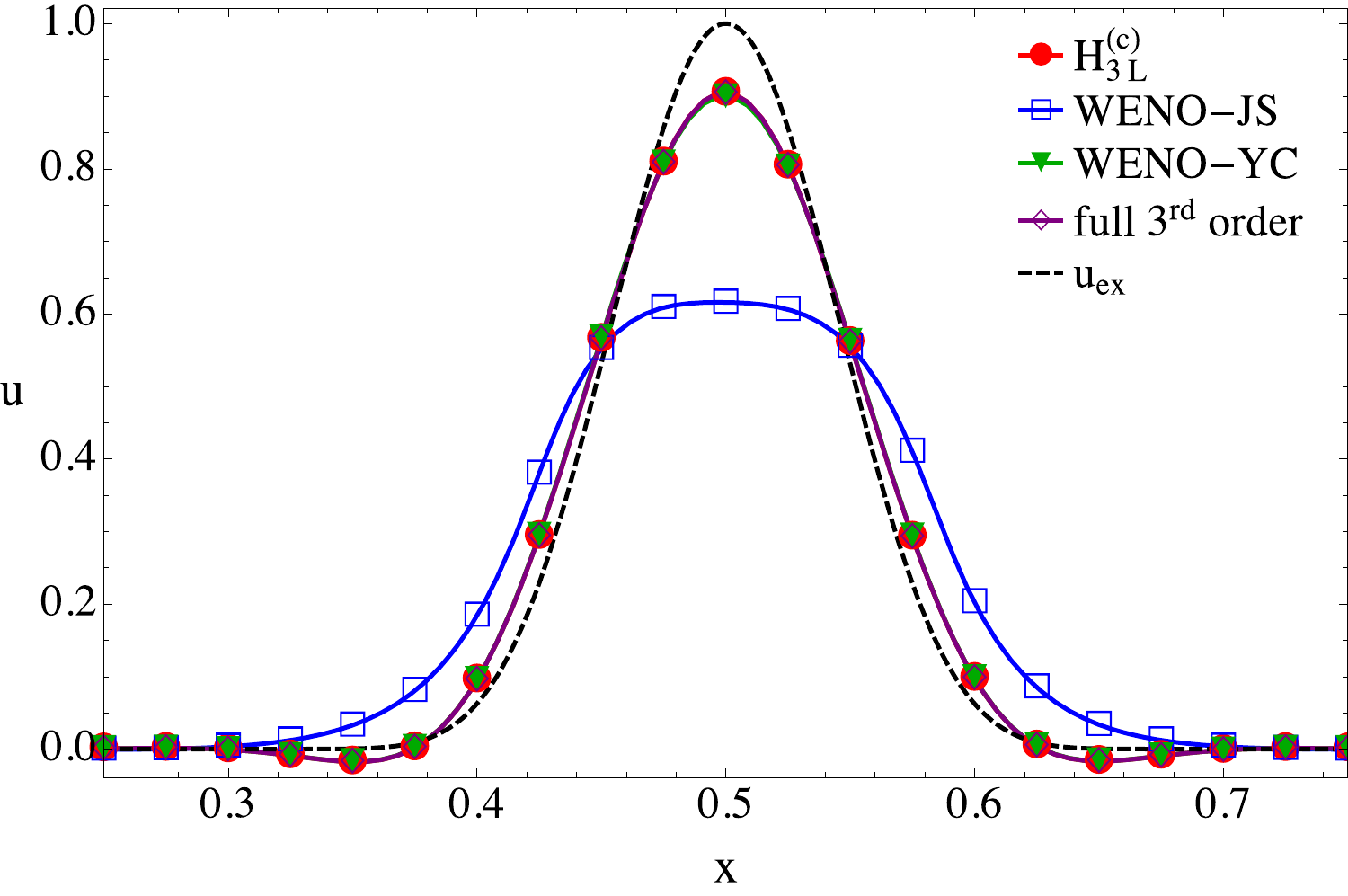}
		\caption{Solution of Eq.~\eqref{eq:advectionEq}, \eqref{eq:ICsmooth} with different numerical schemes, slightly zoomed in.}
		\label{fig:solTest0}
		\end{subfigure}
		\hfill
		\begin{subfigure}{0.48\textwidth}
			\includegraphics[width=\textwidth]{./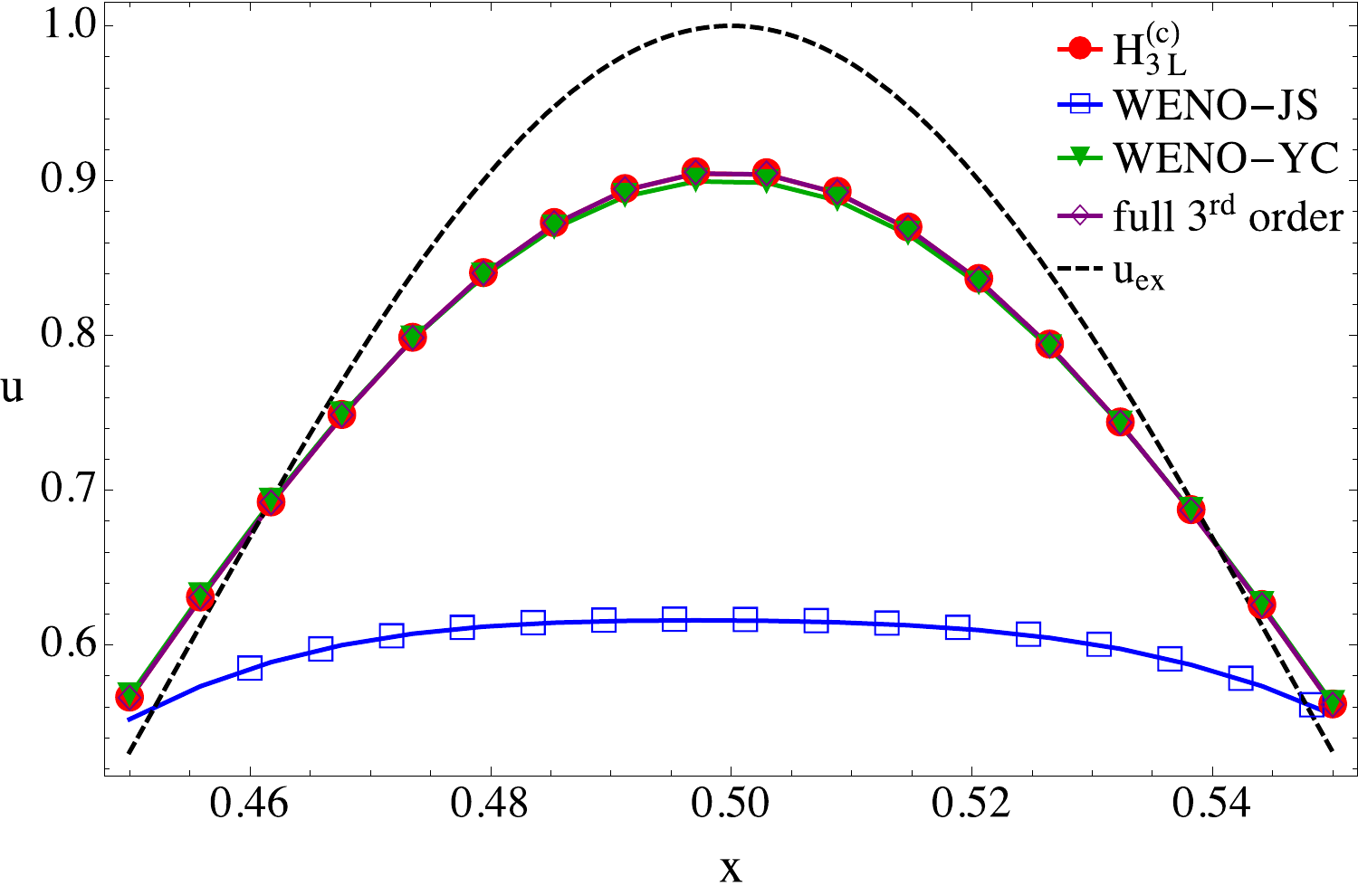}
			\caption{Enlarged view of the maximum of the solution.}
			\label{fig:solTest0zoom}
		\end{subfigure}
		\caption{Results of advection equation Eq.~\eqref{eq:advectionEq} with smooth initial condition, $\nu=0.8$, $n=170$ grid cells and $t_\text{end}=10$, i.e. the solution has been advected 10 times around the domain.}
	\end{figure}
	With this first test case we aim to verify that all considered schemes are $3^{\text{rd}}$-order accurate for smooth solutions. We solve the linear advection equation,
	\vspace{-.3cm}
	\begin{align}
		\label{eq:advectionEq}
		u_t + u_x = 0
		\vspace{-.2cm}
	\end{align}
	\vspace{-.2cm}
	with the smooth initial condition
	\begin{align}
		\label{eq:ICsmooth}
		u_0(x)=\begin{cases}
			(0.5+0.5\cos(5\pi (x-0.5)))^4\quad&\text{if}\quad 0.3 \leq x\leq 0.7 \\
			0\quad &\text{else}
		\end{cases}
	\end{align}
	and periodic boundary conditions. The computational domain is $[0, 1]$, the CFL number $\nu=0.8$ and the solution is advected until $t_\text{end}=10$. The spatial resolution is the sequence of refined uniform grids with $n=20, 40, 50, 100, 120, 170, 200, 300, 500, 700, 1000, 1500, 3000$ cells. For WENO-YC, according to Eq.~\eqref{eq:weightsYC}, we set $\varepsilon = 20.67 \;\Delta x^2$ with $\Delta x = 1/n$. For the FV limiter function Eq.~\eqref{eq:HnewComb} and \eqref{eq:eta} we fix $\alpha=\max_{x \in \Omega \backslash \Omega_d} |u_{0}^{\prime\prime}(x)| =493.48$.	
	
	Fig.~\ref{fig:solTest0} and \ref{fig:solTest0zoom} show zooms of the solution at $t_\text{end}$ with different numerical schemes on a $170$-cell grid. It can be seen that $H_{3\text{L}}^{(c)}$ and WENO-YC, as well as the full-$3^\text{rd}$-order scheme, perform much better than the conventional WENO-JS scheme in terms of accuracy. This can also be observed in Fig.~\ref{fig:errorTest0} which shows the $L_1$- and $L_\infty$-errors	
	\begin{figure}[h!]
		\centering
		\setlength{\abovecaptionskip}{10pt plus 3pt minus 2pt}
		\begin{subfigure}{0.49\textwidth}
			\includegraphics[width=\textwidth]{./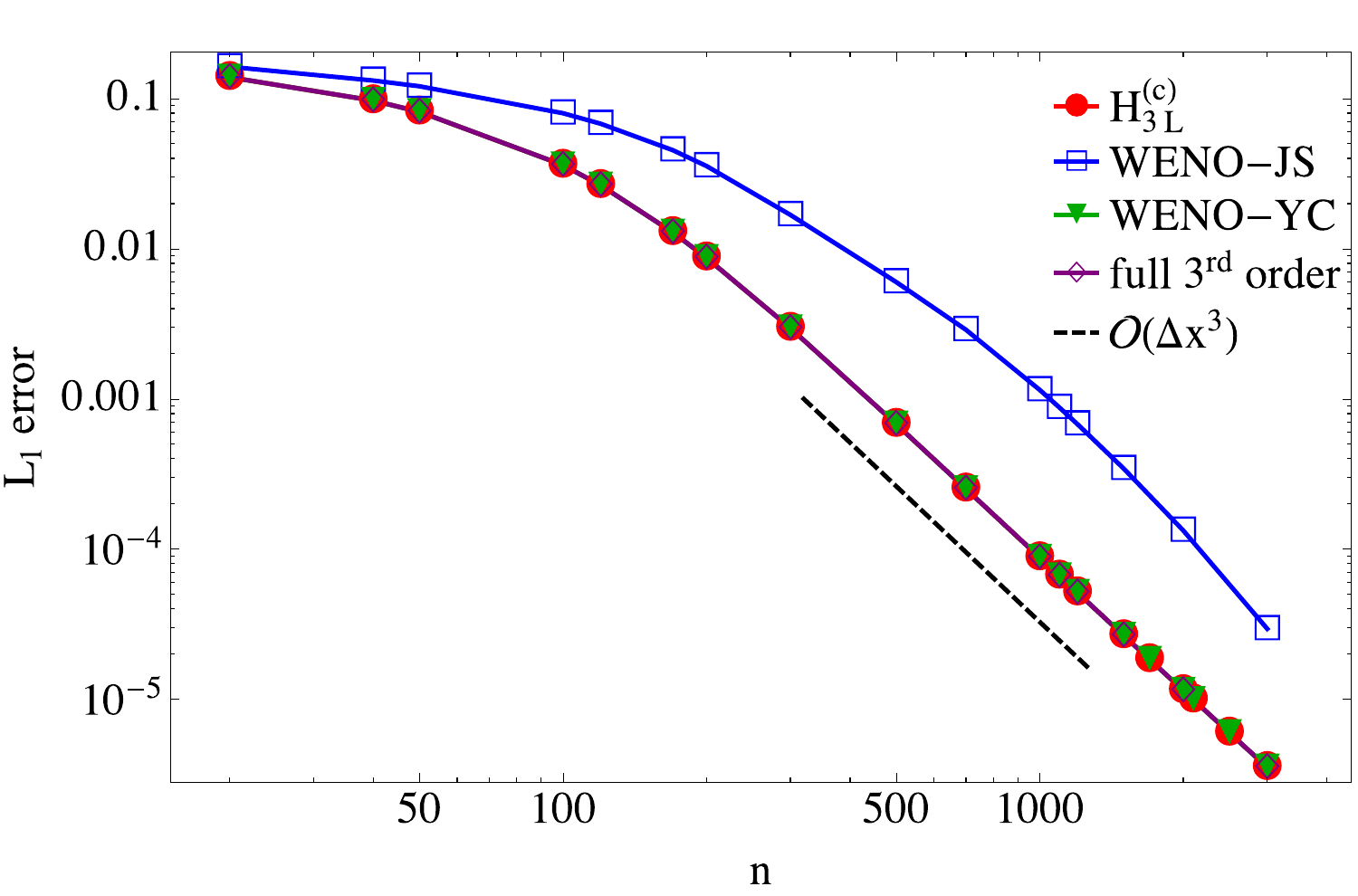}
			\label{fig:L1errorTest0}
		\end{subfigure}		
		\hfill
		\begin{subfigure}{0.49\textwidth}
			\includegraphics[width=\textwidth]{./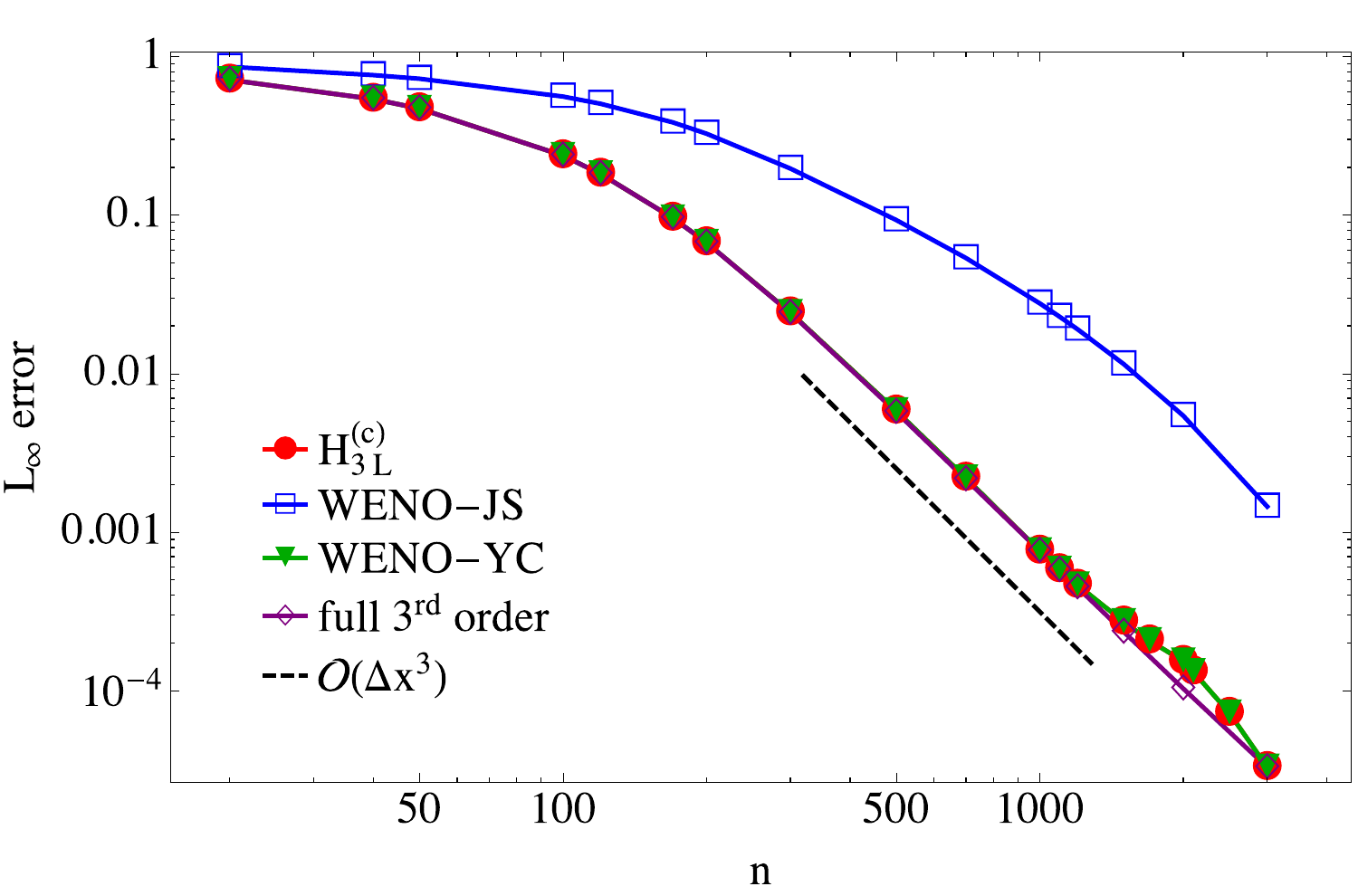}
			\label{fig:LinfErrorTest0}
		\end{subfigure}		
		\caption{Double-logarithmic plot of the $L_1$- and $L_\infty$-errors vs. number of grid cells for the advection equation with smooth initial condition, Eq.~\eqref{eq:advectionEq}, \eqref{eq:ICsmooth}.}
		\label{fig:errorTest0}
	\end{figure}
	obtained at $t_\text{end}$.
	The convergence rates of our proposed FV scheme, WENO-YC, and the unlimited $3^\text{rd}$-order scheme reach $3^{\text{rd}}$ order starting at $n=150$ grid cells whereas WENO-JS shows this order of convergence only for very large numbers of cells and with a larger error constant.
	%
\subsection{Advection Equation with Discontinuous Initial Condition}\label{subsec:discIC}
	%
	In this section we want to discuss the behavior of the numerical schemes for solutions containing discontinuities. Therefore, we consider the advection equation, Eq.~\eqref{eq:advectionEq} with $a=1$ and square wave initial condition
	\begin{align}
	\label{eq:discontIC}
		u_0(x)= \begin{cases}
		1 \quad \text{for}\;\; -0.5<x<0.5 \\
		0\quad \text{else.}
		\end{cases}
	\end{align}
	The computational domain is $[-1, 1]$, the CFL number $\nu=0.8$ and the solution is advected until $t_\text{end}=10$, which corresponds to $5$ periods in time. Due to the large gradients contained in the initial condition, the limiter functions have to take effect in order to avoid spurious oscillations to appear. Solving this test case with the full-$3^\text{rd}$-order reconstruction Eq.~\eqref{eq:3rdOrder} generates oscillations. This is the reason the FV limiter functions presented in Sec.~\ref{sec:LimO3} restrict the reconstruction to $1^\text{st}$-order in these situations and WENO3 reduces to Eq.~\eqref{eq:HwenoBigDeltas}. The WENO-YC parameter $\varepsilon$ is given by $\Delta x^2$ (see Eq.~\eqref{eq:epsYC}) and $H_{3L}^{(c)}$ reduces to $H_{3L}$ because $\alpha=\max_{x \in \Omega \backslash \Omega_d} |u_{0}^{\prime\prime}(x)| = 0$.
	
	This test case nicely shows the already mentioned drawback of the definition of $\varepsilon$ in WENO-YC, Eq.~\eqref{eq:epsYC}, namely the coefficient of $\varepsilon$ which is not translationally invariant if the initial condition is shifted. To point this out, a second test case has been chosen, where the initial condition has simply been shifted by $+100$, i.e 
	\begin{align}
	\label{eq:discontICplus100}
		\text{IC}_{+100}:\;\, u_{0, +100}(x)= \begin{cases}
		101 \quad \text{for}\;\; -0.5<x<0.5 \\
		100\quad \text{else.}
		\end{cases}
	\end{align}
	When applying WENO-JS to this new initial condition, the solution is does not show more oscillations, because $\varepsilon$ is fixed to $10^{-6}$. In fact, the solution is the same as for $u_0(x)$, only shifted by $+100$. Also, in the proposed FV limiter, the constant $\alpha=\max_{x \in \Omega \backslash \Omega_d} |u_{0, +100}^{\prime\prime}(x)| = 0$ does not change. Thus, the scheme yields the exact same results, shifted by $+100$. However, for WENO-YC, $\varepsilon$, as given by Eq.~\eqref{eq:epsYC}, is no longer $\Delta x^2$ but $20201\,\Delta x^2$. The higher value of $\varepsilon$ leads to augmented oscillations in the solution, as can be seen in Fig.~\ref{fig:solTest3}. Here, for better comparison, the solution $u_{+100}(x,t_\text{end})$ of the test with shifted initial condition, which lies in the range $[99.9, 101.1]$, has been shifted to the range $[-0.1, 1.1]$. Thus, both test cases, Eq.~\eqref{eq:discontIC} and \eqref{eq:discontICplus100} lie between $-0.1$ and $1.1$ in the plots and the magnitude of the oscillations can be compared. 
	\begin{figure}[t]
		\centering
		\begin{subfigure}{0.49\textwidth}
			\includegraphics[width=\textwidth]{./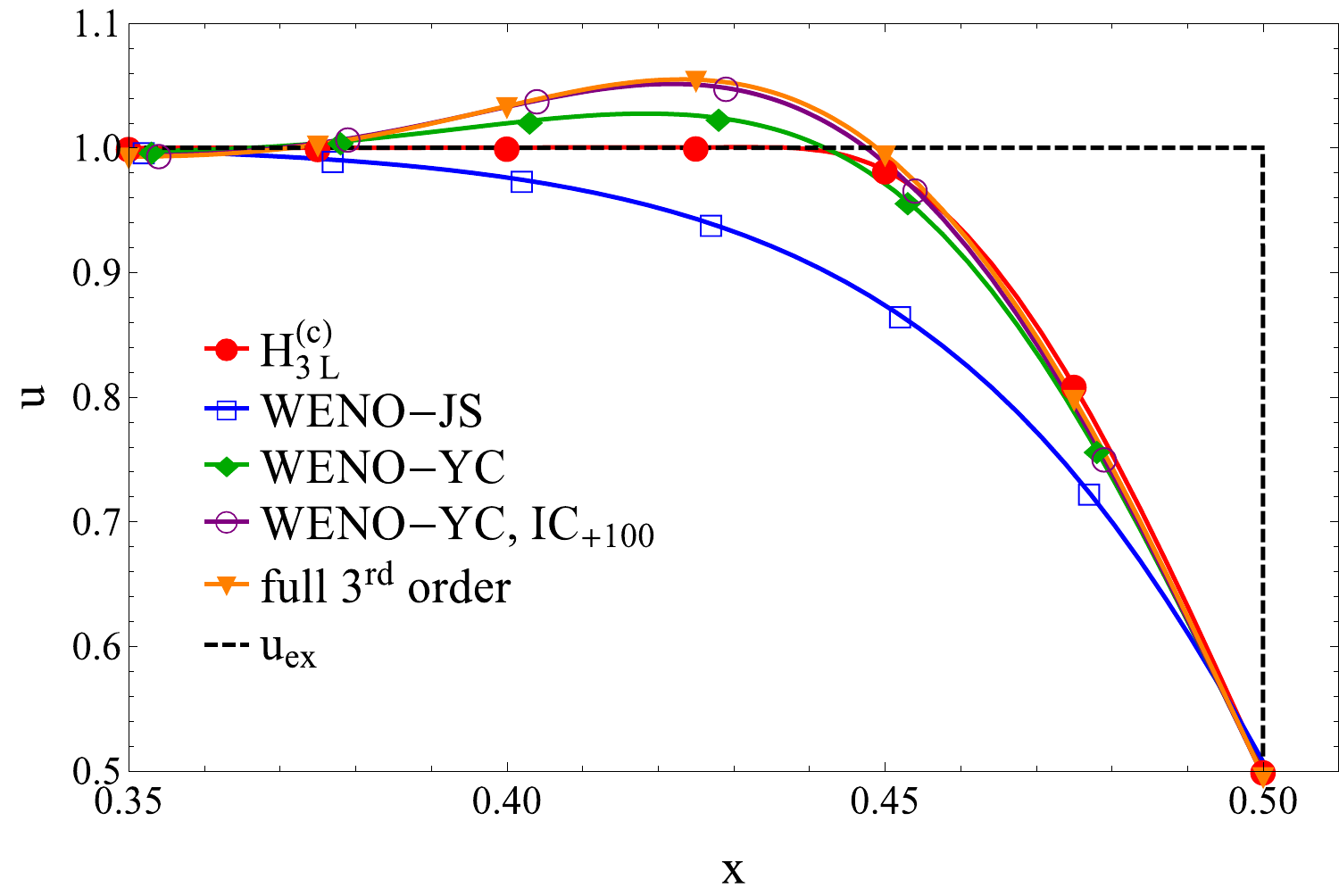}
		\end{subfigure}
		\begin{subfigure}{0.49\textwidth}
			\includegraphics[width=\textwidth]{./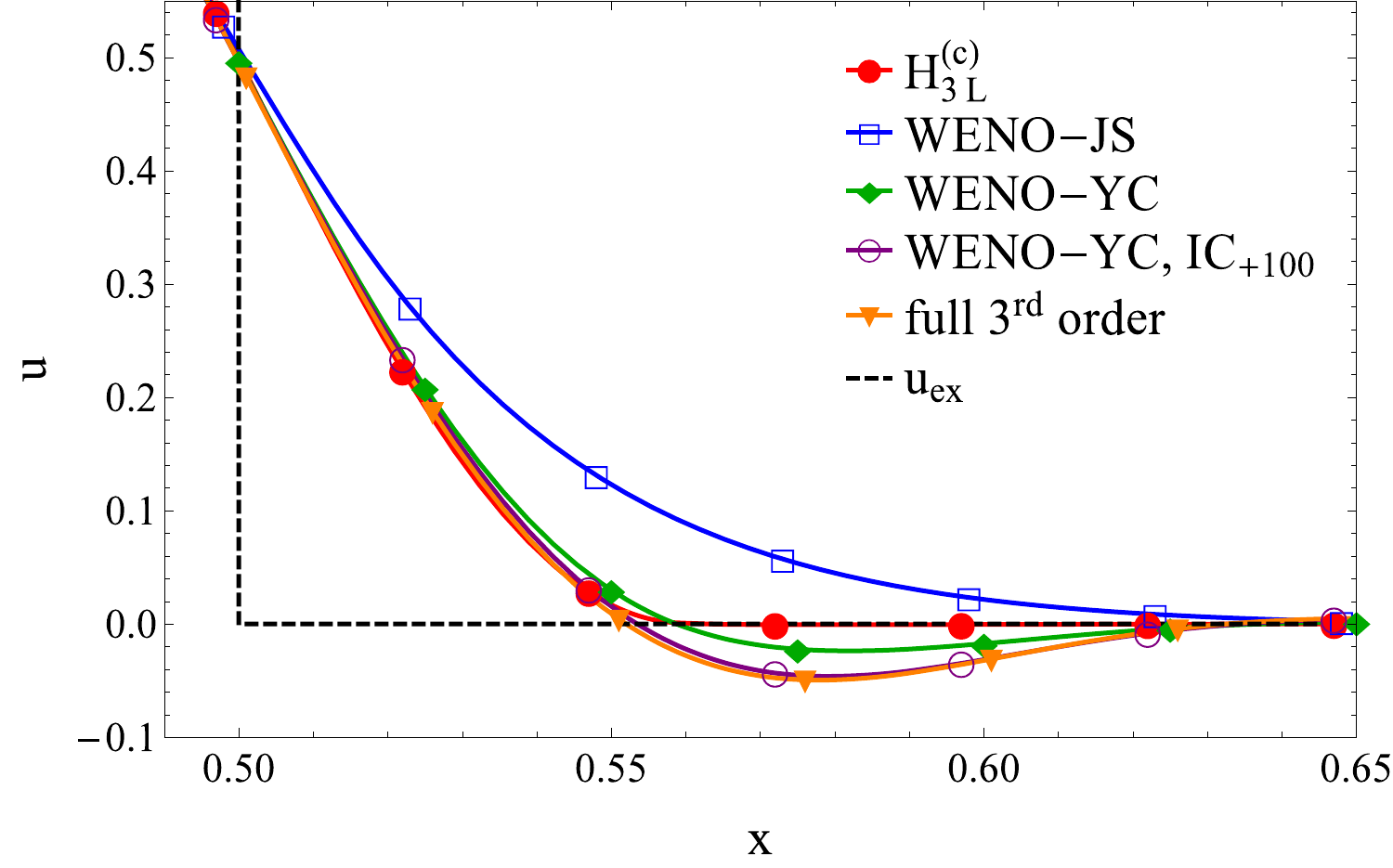}
		\end{subfigure}			
		\caption{Enlarged view of the discontinuities of the solution of Eq.~\eqref{eq:advectionEq}, \eqref{eq:discontIC} with different numerical schemes using $n=320$ cells. $\nu=0.8, t_\text{end}=10$.}
		\label{fig:solTest3}
	\end{figure}
	As seen in Fig.~\ref{fig:solTest3}, the original WENO3 scheme does not cause any oscillation but it is rather dissipative. Our proposed FV limiter does not produce overshoots either. Additionally, it approximates the sharp structure of the initial condition better than WENO-JS. WENO-YC with correctly chosen parameter approximates the discontinuity almost as good as $H^{\text{(c)}}_{3\text{L}}$, however, it causes oscillations. These are even larger for the badly chosen $\varepsilon$.
	\begin{figure}[t]
		\centering
		\begin{subfigure}{0.49\textwidth}
			\includegraphics[width=\textwidth]{./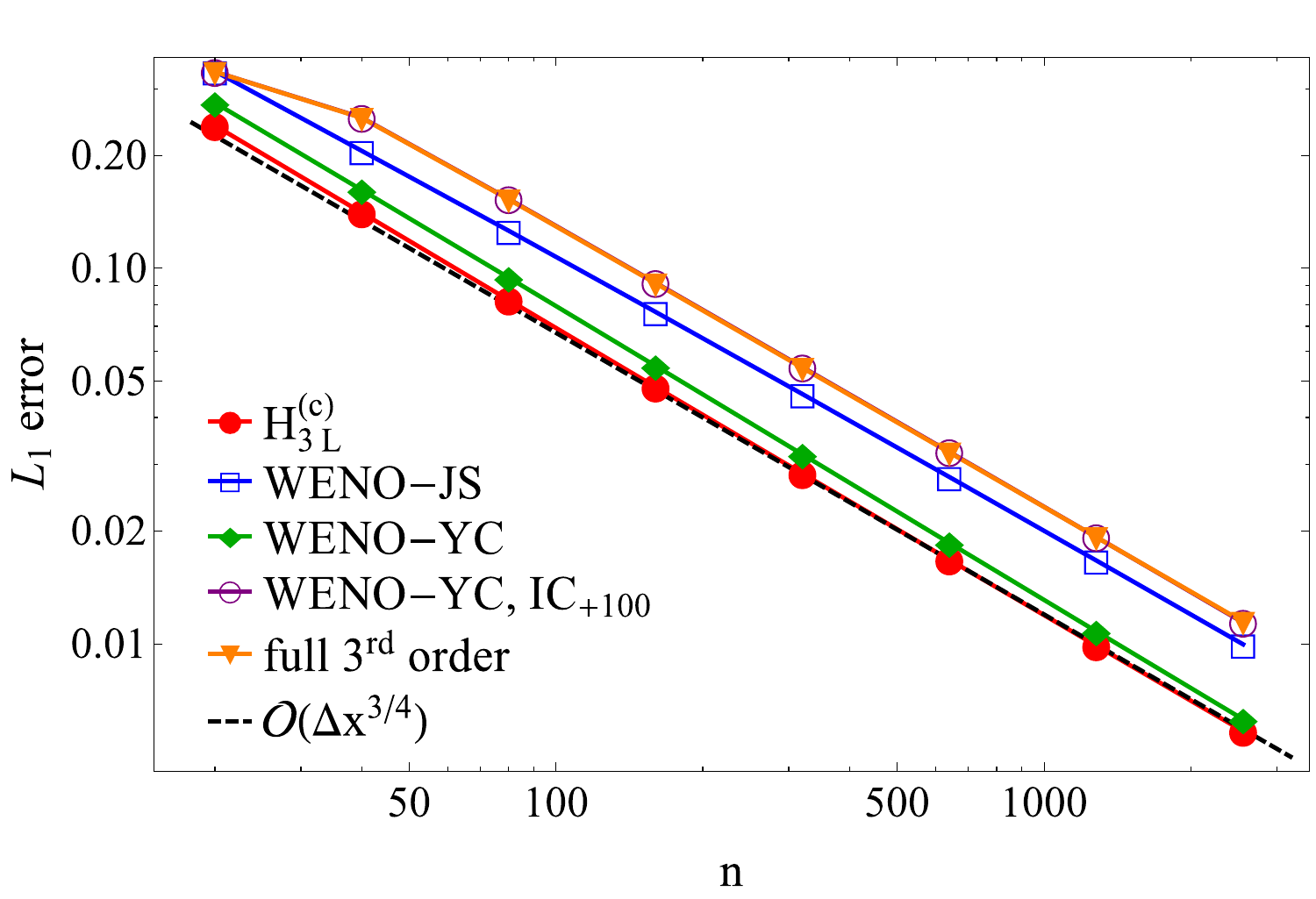}
			\caption{Double-logarithmic plot of the $L_1$-error vs. number of grid cells.}
			\label{fig:errorTest3}
		\end{subfigure}
		\begin{subfigure}{0.49\textwidth}
			\includegraphics[width=\textwidth]{./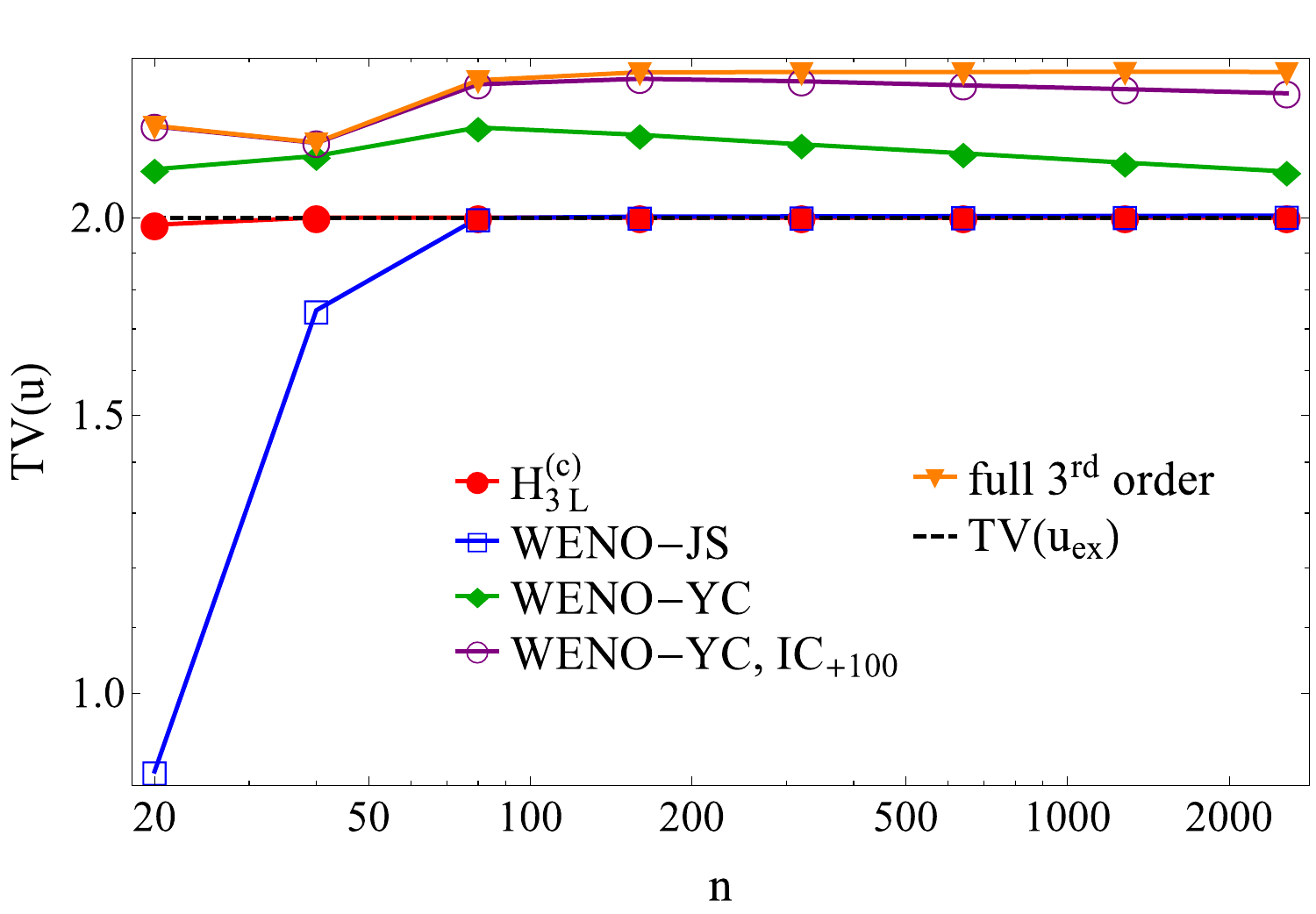}
			\caption{Total Variation vs. number of grid cells of the different schemes.}
			\label{fig:TVTest3}
		\end{subfigure}			
		\caption{Results of advection equation Eq.~\eqref{eq:advectionEq} with discontinuous initial condition., $\nu=0.8$ and $t_\text{end}=10$, i.e. the solution has been advected 10 around the domain.}
	\end{figure}
	This behavior can also be observed in Fig.~\ref{fig:errorTest3} and \ref{fig:TVTest3} which show the $L_1$-error and the total variation (TV), respectively. 
	The best order of convergence we can expect from a $3^\text{rd}$-order scheme and a solution containing discontinuities is $3/4$. This can be shown by the Fourier analysis of the modified equation with self-similar initial condition, cf. \cite{LeVeque2002}. \\
	Even though, both solutions computed with WENO-YC cause oscillations, they obtain the order of accuracy $3/4$. The more dissipative WENO-JS scheme is also of order $3/4$ but with a larger error constant. Among the tested schemes, the best error constant is obtained with our proposed FV limiter function. The total variation of all schemes represents their behavior as seen in the solution. WENO-JS attains the TV of the exact solution $TV(u_{ex})$ from below, meaning that is does not cause overshoots at all, whereas WENO-YC is larger than $TV(u_{ex})$ for all spatial discretizations. $H^{\text{(c)}}_{3\text{L}}$ is closest to TV(u$_\text{ex}$) and lies never above TV(u$_\text{ex}$) for all time steps, i.e. it does not cause oscillations at any time.
	%
	\subsection{Initial Condition Containing Smooth and Non-Smooth Features}\label{subsec:difficultIC}
	%
	\begin{figure}[b]
		\centering
		\includegraphics[scale=0.38]{./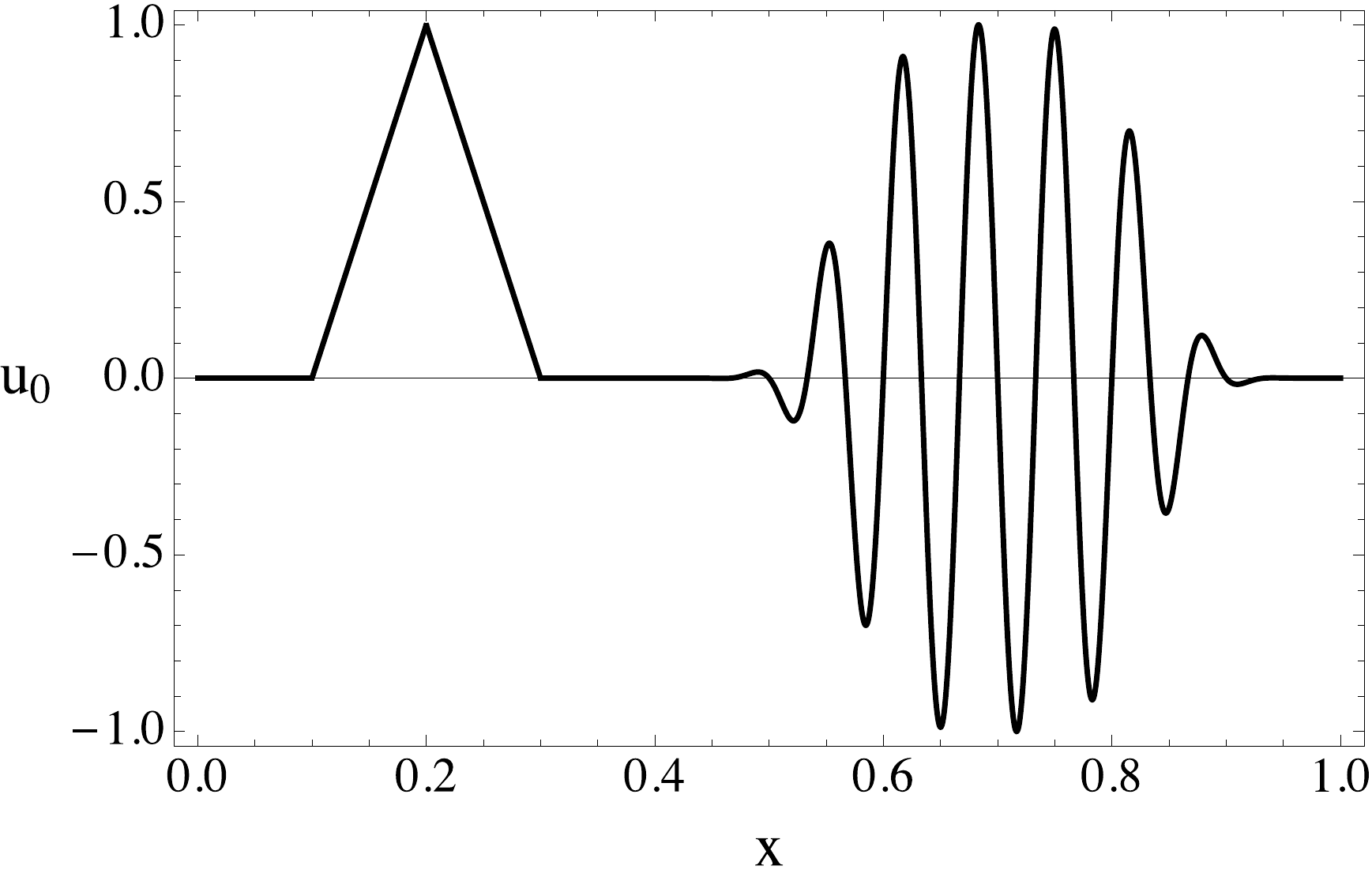}
		\caption{Initial condition \eqref{eq:test1IC} containing smooth and non-smooth features.}
	\end{figure}
	We consider the same setup as in Sec.~\ref{subsec:smoothIC} with  CFL number $\nu = 0.8$, $t_\text{end}=10.0$ and initial condition
	\begin{align}
		\label{eq:test1IC}
		u_0(x)=\max \left(\min \left(\frac{x}{0.1}-2, -\left(\frac{x}{0.1}-2\right)\right) + 1, 0\right)+\exp \left(-	\left(\frac{x-0.7}{0.15}\right)^4\right) \sin (30 \pi  x).
	\end{align}
	In this problem, we want to test how accurate the different schemes resolve small features in a larger setting of a more complex solution. The spatial discretizations are $n=20\cdot 2^i,\; i=0,\ldots, 7$ grid cells. For WENO-YC, we set $\varepsilon = 1042.83 \;\Delta x^2$, $\Delta x = 1/n$, according to Eq.~\eqref{eq:epsYC}. For the FV limiter function we fix $\alpha=\max_{x \in \Omega \backslash \Omega_d} |u_{0}^{\prime\prime}(x)|=8887.87$. We run a third case with WENO-YC but setting $\varepsilon = \Delta x^2$ to show the difference in performance as described in Sec.~\ref{subsec:WENOsmoothness} and \ref{subsec:preliminaryTest}. This test case corresponds to the weight functions proposed in \cite{ArandigaMarti2014, Kolb2014}, where $\varepsilon=K\,\Delta x^2$ was used with $\;K=1$. Furthermore, a test with the full-$3^\text{rd}$-order reconstruction $H_3$, Eq.~\eqref{eq:3rdOrder2param}, was performed and compared to the numerical schemes discussed in this paper.
	
	Fig.~\ref{fig:test1} shows a close up view of two significant regions of the solution with $n=640$ grid cells. We observe that our proposed FV limiter function and WENO-YC with correctly-chosen parameter $\varepsilon$ perform much better than WENO-YC with $\varepsilon = \Delta x^2$. The results of these two schemes are very close to the ones of the full-$3^\text{rd}$ order reconstruction. Also, all four schemes outperform the conventional WENO-JS scheme.
	\begin{figure}[t]
	\begin{subfigure}{0.4\textwidth}
		\includegraphics[width=\textwidth]{./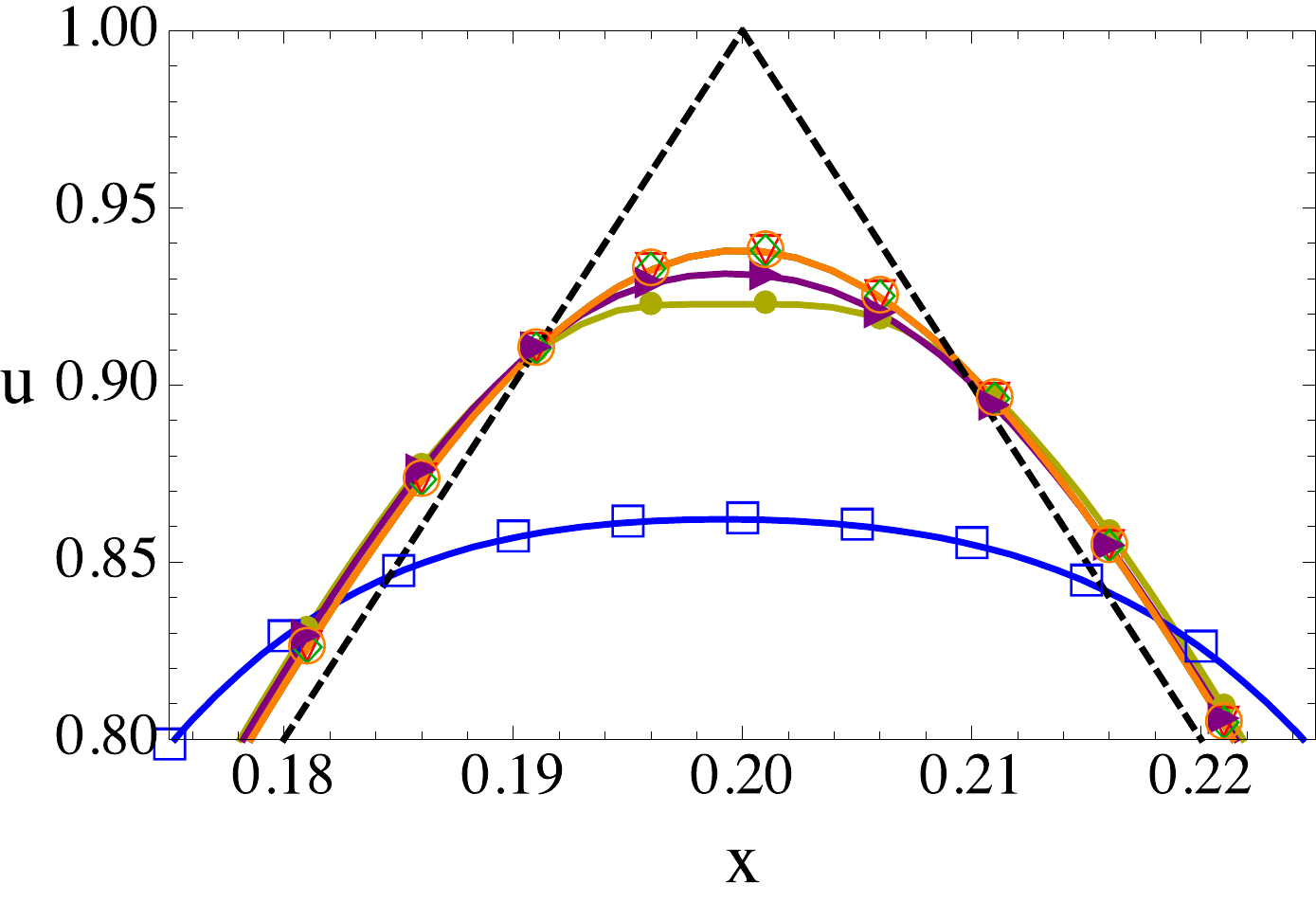}
   		\end{subfigure}
   		\begin{subfigure}{0.59\textwidth}
   			\centering
   			\includegraphics[width=\textwidth]{./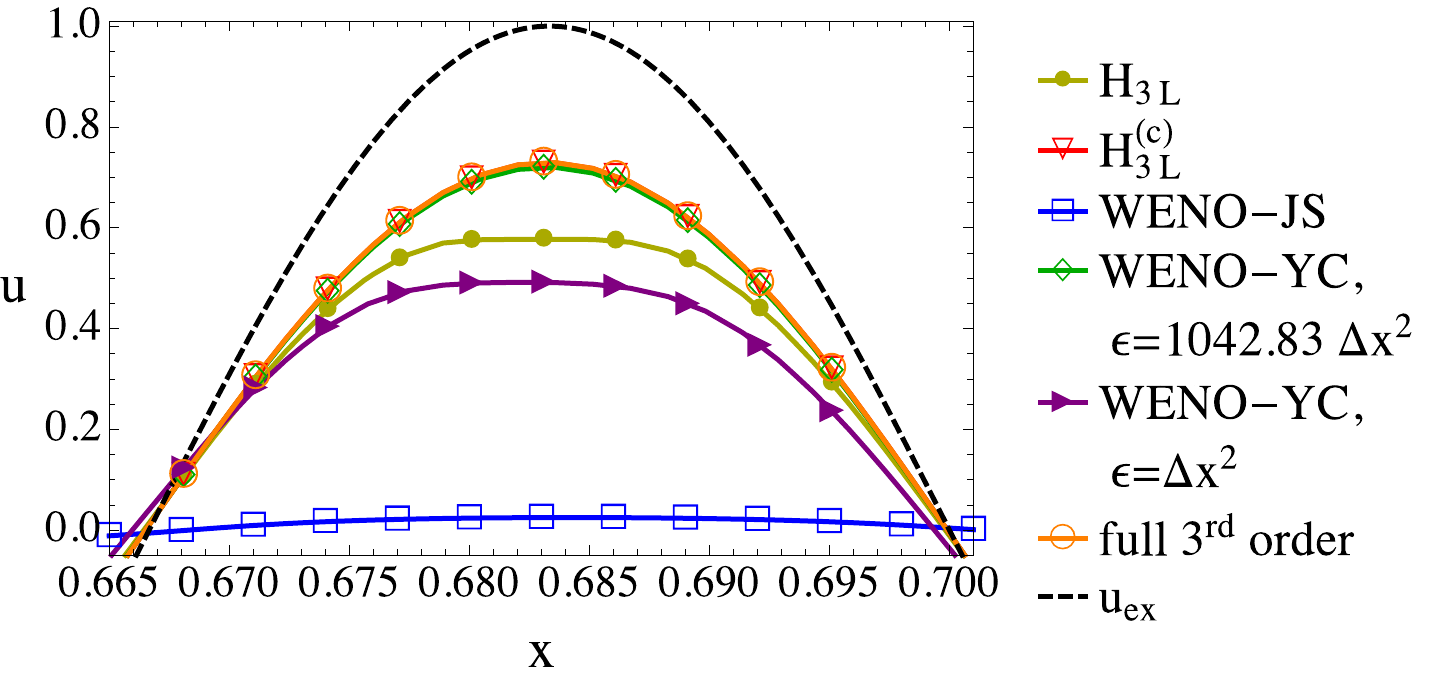}
   		\end{subfigure}
   		\caption{Results of different schemes for the advection equation with more elaborate initial condition: Zoom of two significant regions of the solution with $n=640$ grid cells at $t_\text{end}=10, \nu=0.8$.}
   		\label{fig:test1}
	\end{figure}   	
	This can also be seen in Fig.~\ref{fig:test1L1error}, which shows the double-logarithmic plot of the $L_1$- and $L_\infty$-errors versus the number of grid cells in the smooth part of the solution, $x \in [0.4, 1]$. The solution cannot be accurately represented with few grid cells by any of the treated schemes. Even the full $3^{\text{rd}}$-order reconstruction does not resolve the details and therefore has a large error constant. As soon as a reasonable space discretization is reached, the order of convergence reaches $3^{\text{rd}}$ order if only the range $x \in [0.4, 1]$, i.e. the smooth part of the solution, is regarded. Solely WENO-JS does not reach this order even at the highest resolution. 
	\begin{figure}[b]
		\centering
		\includegraphics[width=0.7\textwidth]{./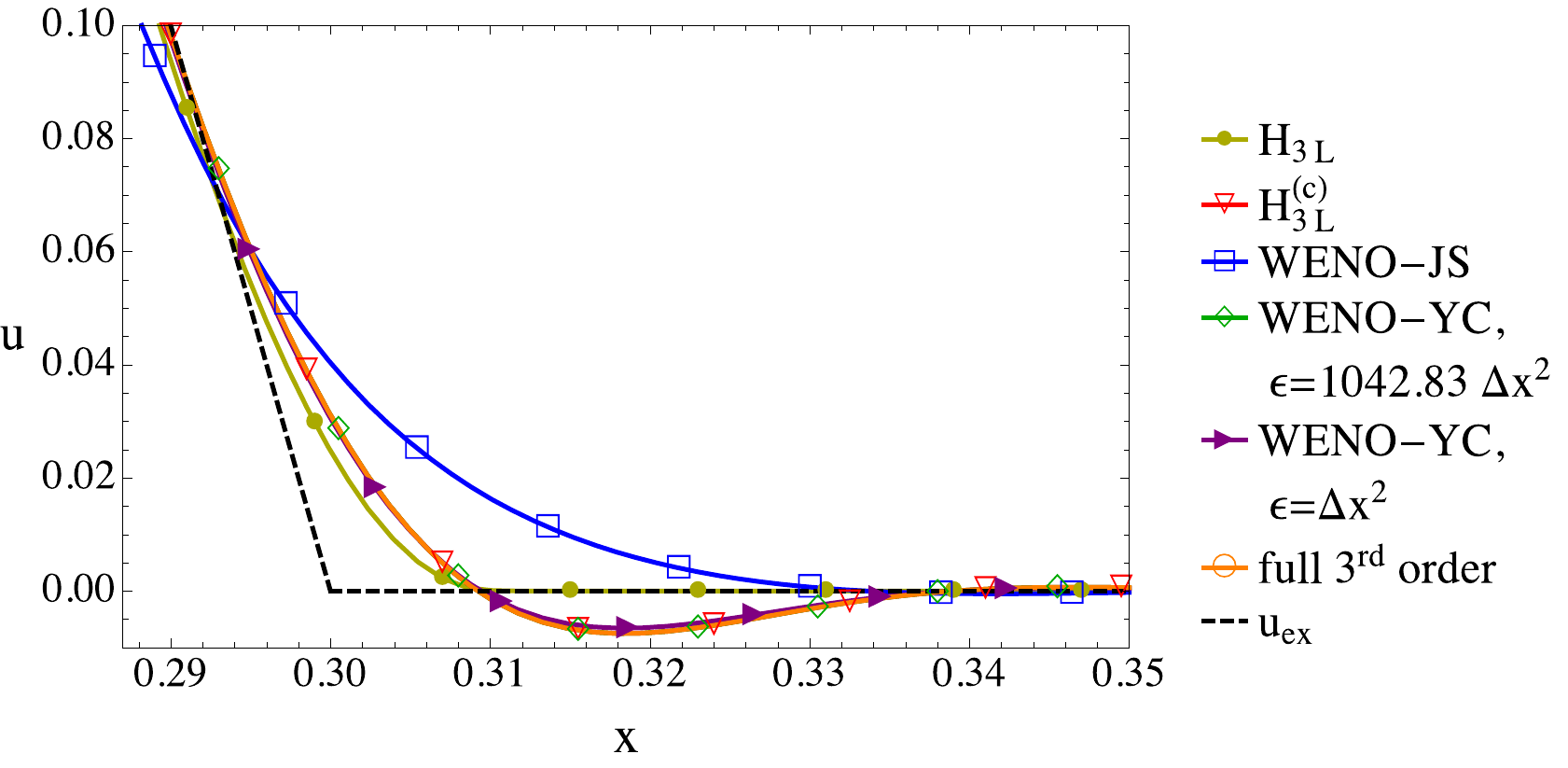}
		\caption{Zoom of the kink of the solution with $n=640$ grid cells at $t_\text{end}=10, \nu=0.8$.}
		\label{fig:test1zoom3}
	\end{figure}

 	\begin{figure}[t]
	 	\centering	 	
   		\begin{subfigure}{0.49\textwidth}
	 	 	\includegraphics[width=\textwidth]{./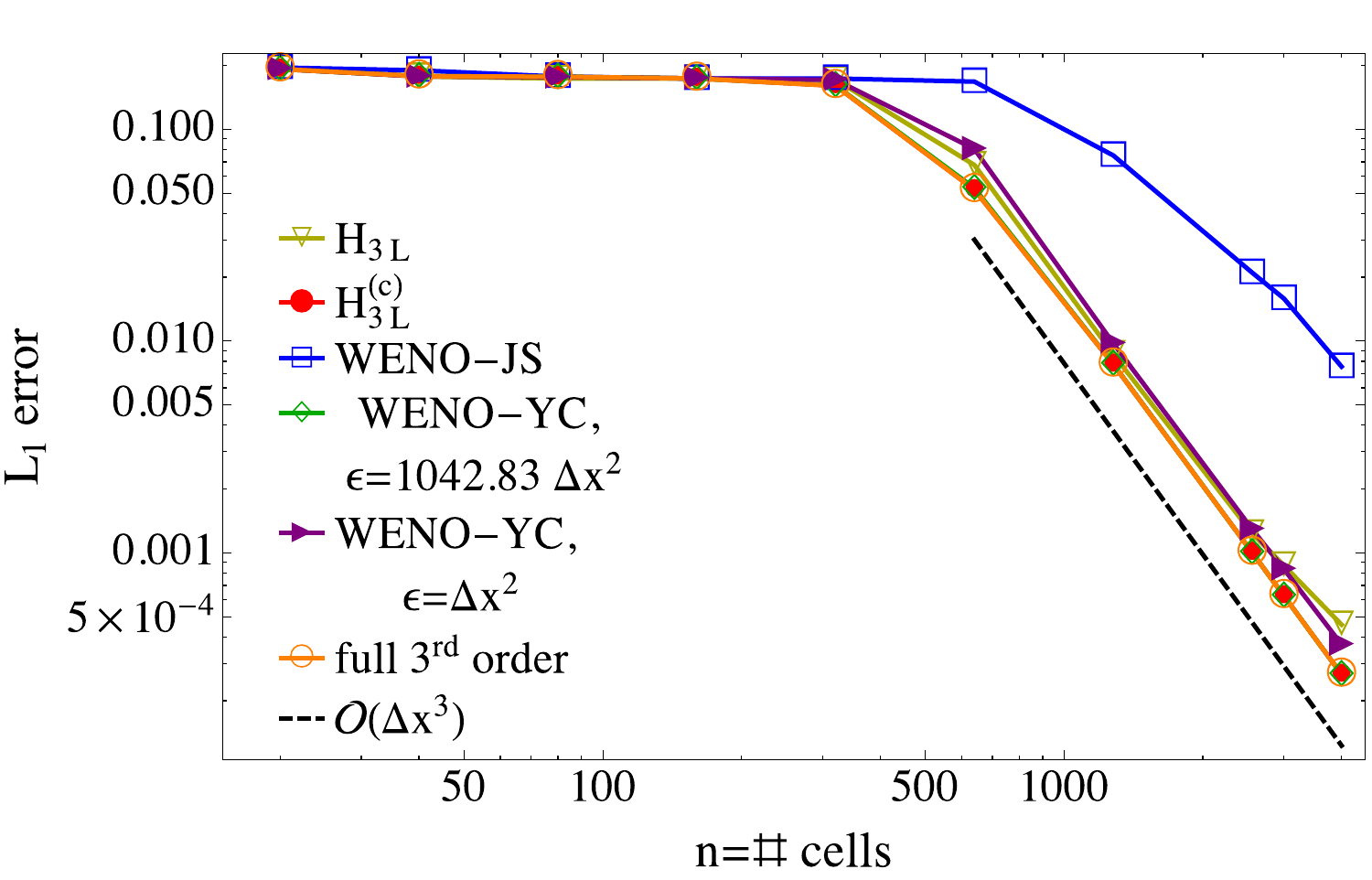}
 	 	\end{subfigure}
		\hfill
   		\begin{subfigure}{0.49\textwidth}
	   		\includegraphics[width=\textwidth]{./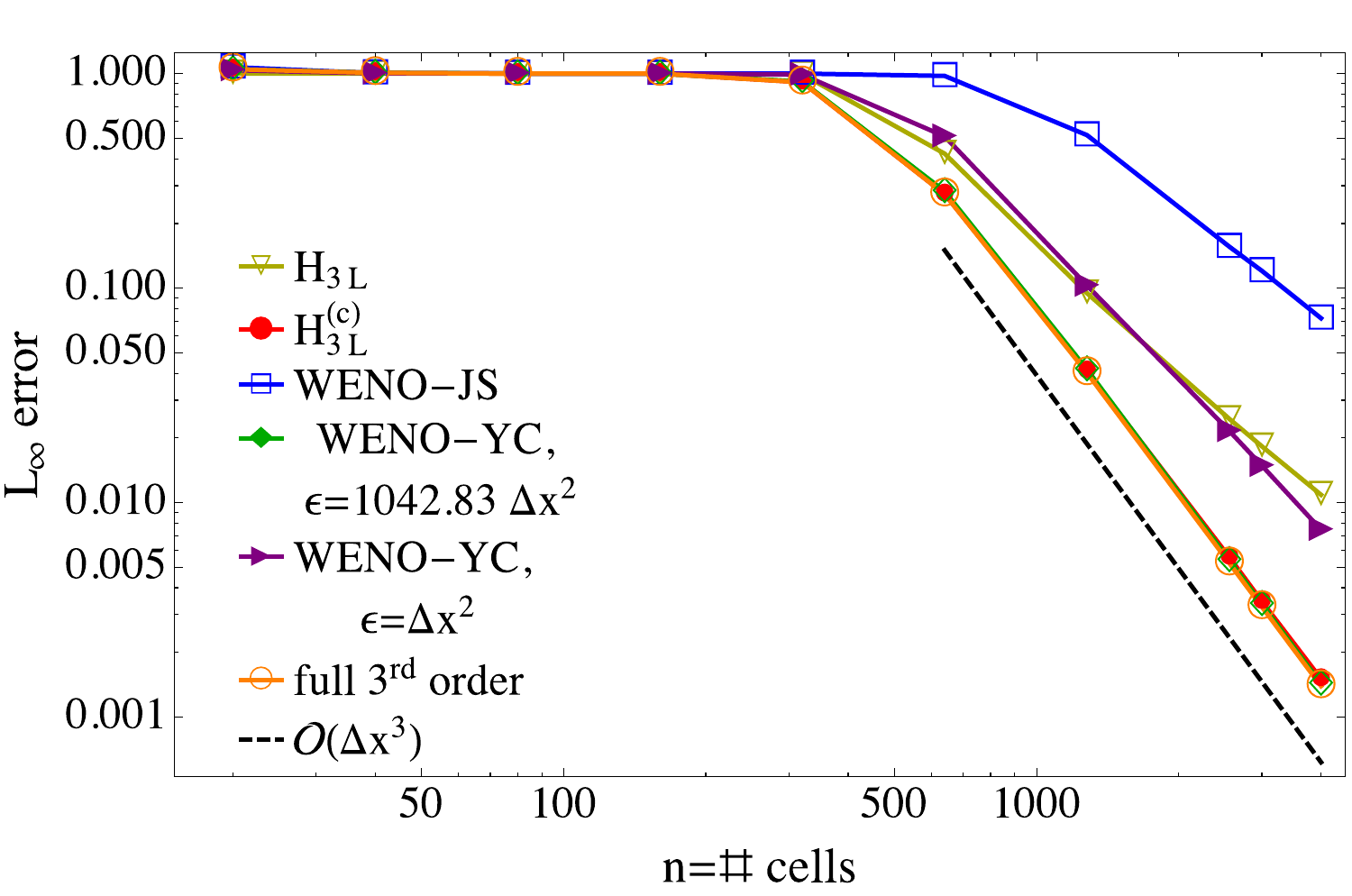}
   		\end{subfigure}
	 	 	\caption{Double-logarithmic plot of the $L_1$- and $L_\infty$-norm of the error vs. number of grid cells in the range $x \in [0.4, 1]$, i.e. the smooth part of the solution of \eqref{eq:advectionEq}, \eqref{eq:test1IC}.}
	 	 	\label{fig:test1L1error}
 	\end{figure} 		 	

	If the error is considered on the whole domain $[0, 1]$, the convergence rate of all schemes degenerates for higher resolutions. This is due to the fact that at higher resolutions the kinks in $x\in [0.1, 0.3]$ become well-resolved and thus recognized as non-smooth. The initial conditions are an interesting test case combining smooth and non-smooth features and therefore testing the capabilities of limiter functions. As shown in Fig.~\ref{fig:test1zoom3}, near the position of the kinks, all schemes - with the exception of WENO-JS and $H_{3\text{L}}$ - generate undershoots. For our proposed FV limiter $H^{(c)}_{3\text{L}}$ this can be explained with the large asymptotic region. Since the maximal second derivative of the initial condition is very large, $\alpha=\max_{x \in \Omega \backslash \Omega_d} |u_{0}^{\prime\prime}(x)|=8887.87$, the region where $H^{(c)}_{3\text{L}}$ reconstructs with full-$3^\text{rd}$ order is relatively large. At the same time, the discrete second derivative of the kinks of the triangle are small compared to the extreme regions of the smooth part, and at these points, the solution is reconstructed using $H_3$ with no limiting. As a result, $H^{(c)}_{3\text{L}}$ causes undershoots, just as $H_3$ does, because this is what is efectively used at the kinks. However, compared to other limiting methods, we can in principle control these undershoots using our FV limiter $H^{(c)}_{3\text{L}}$. By choosing a smaller value for $\alpha$, i.e. a smaller asymptotic region, these undershoots can be completely avoided, as can be seen in the test case with pure discontinuity, cf. Sec.~\ref{subsec:discIC}. An adaptive value for $\alpha$ would therefore eliminate the undershoots in the non-smooth region $x\in [0, 0.4]$ while still resolving the smooth smooth parts in $x\in [0.4, 1]$ with high order accuracy. However, such a local adaptivity would necessarily require to use a wider stencil for the reconstruction, because as shown in Fig.~\ref{fig:smoothVSdisc}, three points can not distinguish between the kink and a strongly curved extremum on a coarse grid. 
%
\subsection{Sod Shock Tube Problem}\label{subsec:Euler}
%
Let us consider Sod's problem, which describes a shock tube containing two different ideal gases at the left and right side of a diaphragm, placed at $x=0$. The density, velocity, and pressure of the gases in the left and right region are given by
\begin{align}
	\begin{pmatrix}
	\rho_L \\
	v_L\\
	p_L
	\end{pmatrix}
	=
	\begin{pmatrix}
	1.0\\0.0\\1.0
	\end{pmatrix},
	\quad 
	\begin{pmatrix}
	\rho_R \\
	v_R\\
	p_R
	\end{pmatrix}
	=
	\begin{pmatrix}
	0.125, \\ 0.0\\0.1
	\end{pmatrix}.
	\label{eq:SodPbIC}
\end{align}
At time $t>0$, the diaphragm is removed and the gases begin to mix. The time evolution is described by the one dimensional Euler equations, 
\begin{subequations}
	\begin{align}
		&\textbf{u}_t + \textbf{f(u)}_x = \textbf{0}\\
		\intertext{with \textbf{u}=$(\rho, \rho u, E)$, the flux function}
		&\textbf{f(u)}=\left(\rho u, \rho u^2 + p, u(E+p) \right)^T,\\
		\intertext{and the equation of state for ideal gases}
		E=\frac{p}{\gamma-1}+\frac{1}{2}\rho u^2,
	\end{align}
	\label{eq:EulerEq}
\end{subequations}
with $\gamma=1.4$. The computational domain is set to $[-2, 2]$ and the test is conducted with $N=100$ grid cells until $T_\text{end}=0.8$. We compare the new limiter function $H_{3\text{L}}^{(c)}$ with the full $3^{\text{rd}}$-order reconstruction and  WENO-JS. WENO-YC is not included in the plots because it produced negative values for pressure, when run with $\varepsilon=2.25$. 

\begin{figure}[t]
   	\centering
   	\begin{subfigure}{0.49\textwidth}
   		\centering
   		\includegraphics[width=\textwidth]{./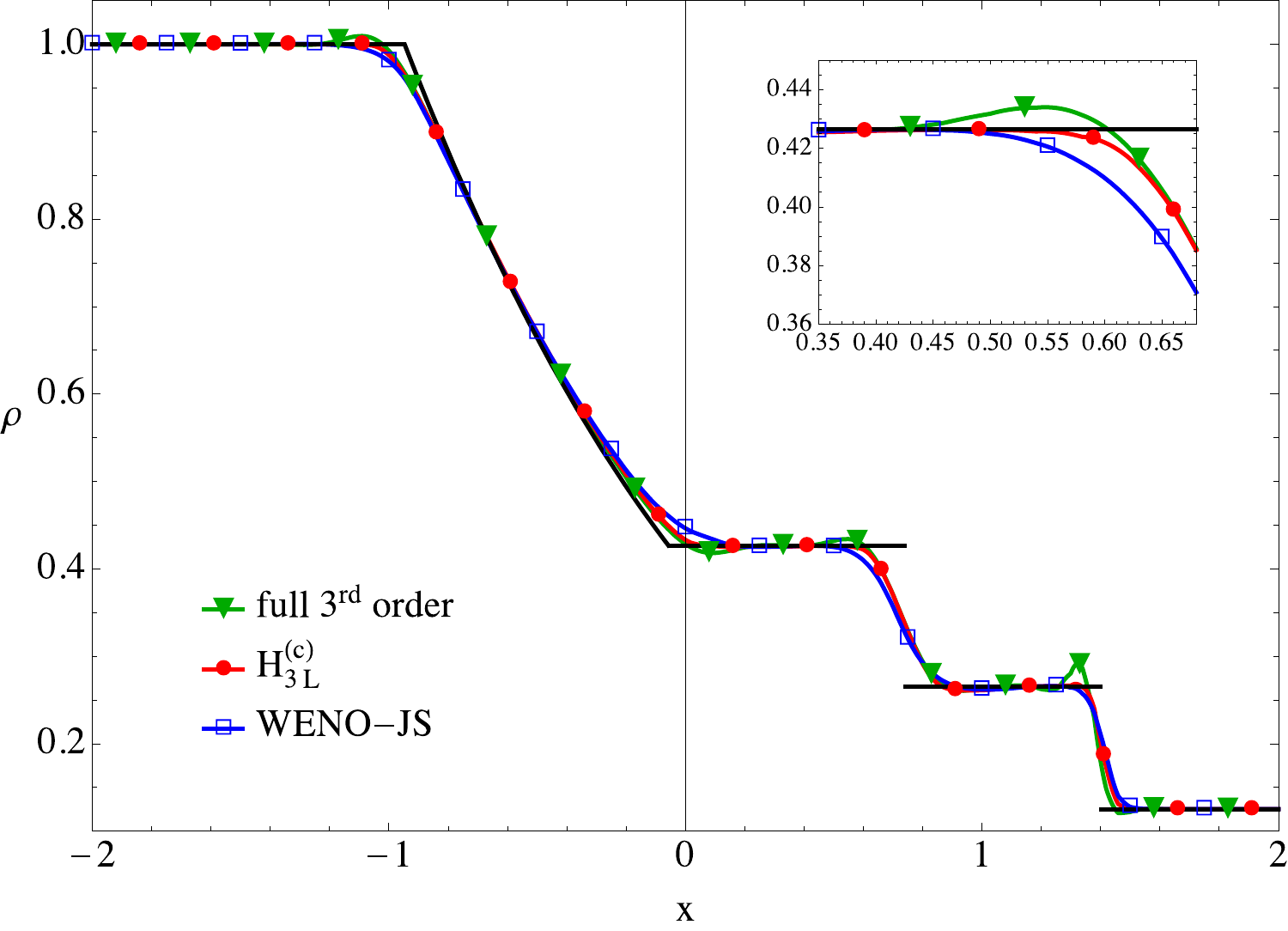}   		
   	\caption{Density profile.}
	\label{fig:SodPbRho}
   	\end{subfigure}
   	\hfill
   	\begin{subfigure}{0.49\textwidth}
   		\centering
   		\includegraphics[width=\textwidth]{./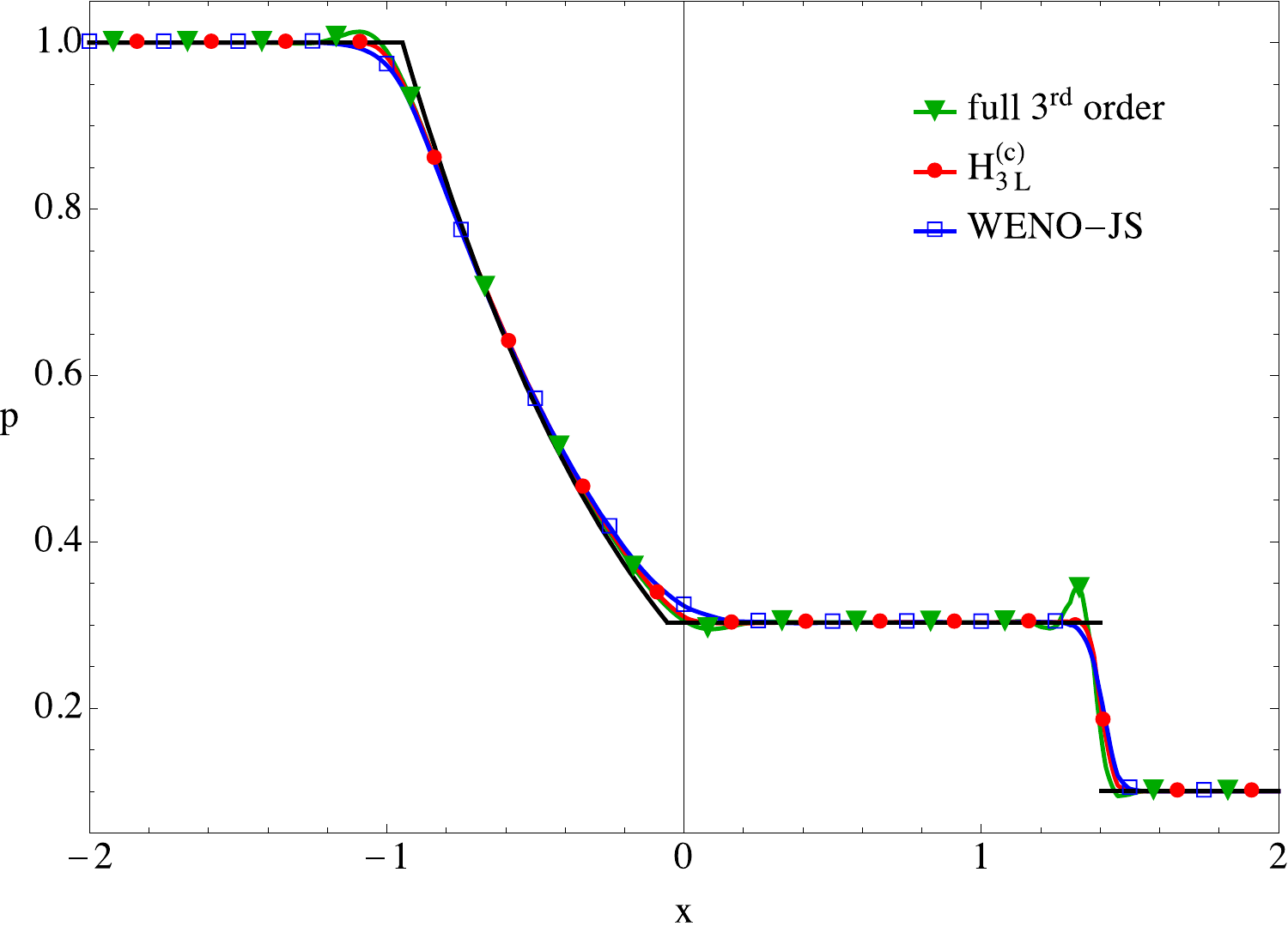}   		
   	\caption{Pressure profile.}
		\label{fig:SodPbP}
   	\end{subfigure}
   	\caption{Solution of different reconstruction techniques for Sod's Problem, Eq.~\eqref{eq:SodPbIC}, \eqref{eq:EulerEq} on the domain $[-2, 2]$ with $N=100$ grid cells, CFL $\nu=0.95$ at $T_\text{end}=0.8$.}
   	\label{fig:SodPbSol}
\end{figure}
Due to the purely discontinuous form of the initial condition, we obtain $\alpha=0$, just as in Sec.~\ref{subsec:discIC}. This means, that $H_{3\text{L}}^{(c)} = H_{3\text{L}}$, i.e. no asymptotic region exists in this test case. The reconstruction techniques have all been applied to the primitive variables $\rho, u$ and $p$.

Sod's shock tube problem leads to three characteristic waves, which can be seen in the solution, Fig.~\ref{fig:SodPbSol}. Both, the density and pressure profile show the rarefaction wave and the shock. The contact discontinuity can only be seen in the density profile, Fig.~\ref{fig:SodPbRho}, not in the pressure distribution, Fig.~\ref{fig:SodPbP}. The solution shows that applying the full-$3^\text{rd}$ order reconstruction leads to over- and undershoots close to the discontinuities. WENO-JS yields good results concerning this feature, however, does not approximate the true solution as accurate as $H_{3\text{L}}^{(c)}$, see also discussion in Sec.~\ref{subsec:discIC}.
%
\subsection{Shu-Osher Problem}\label{subsec:ShuOsher}
%
In this problem, originally introduced by Shu and Osher \cite{ShuOsher1989}, a Mach 3 shock is interacting with sine waves in the density profile. The computational domain is fixed to $[-4.5, 4.5]$ and the shock at time $t=0$ is situated at $x=-4$. The initial conditions of the primitive variables density, velocity and pressure, to the left and right of $x=-4$ are given by
\begin{figure}[H]
	\centering
	\includegraphics[width=0.6\textwidth]{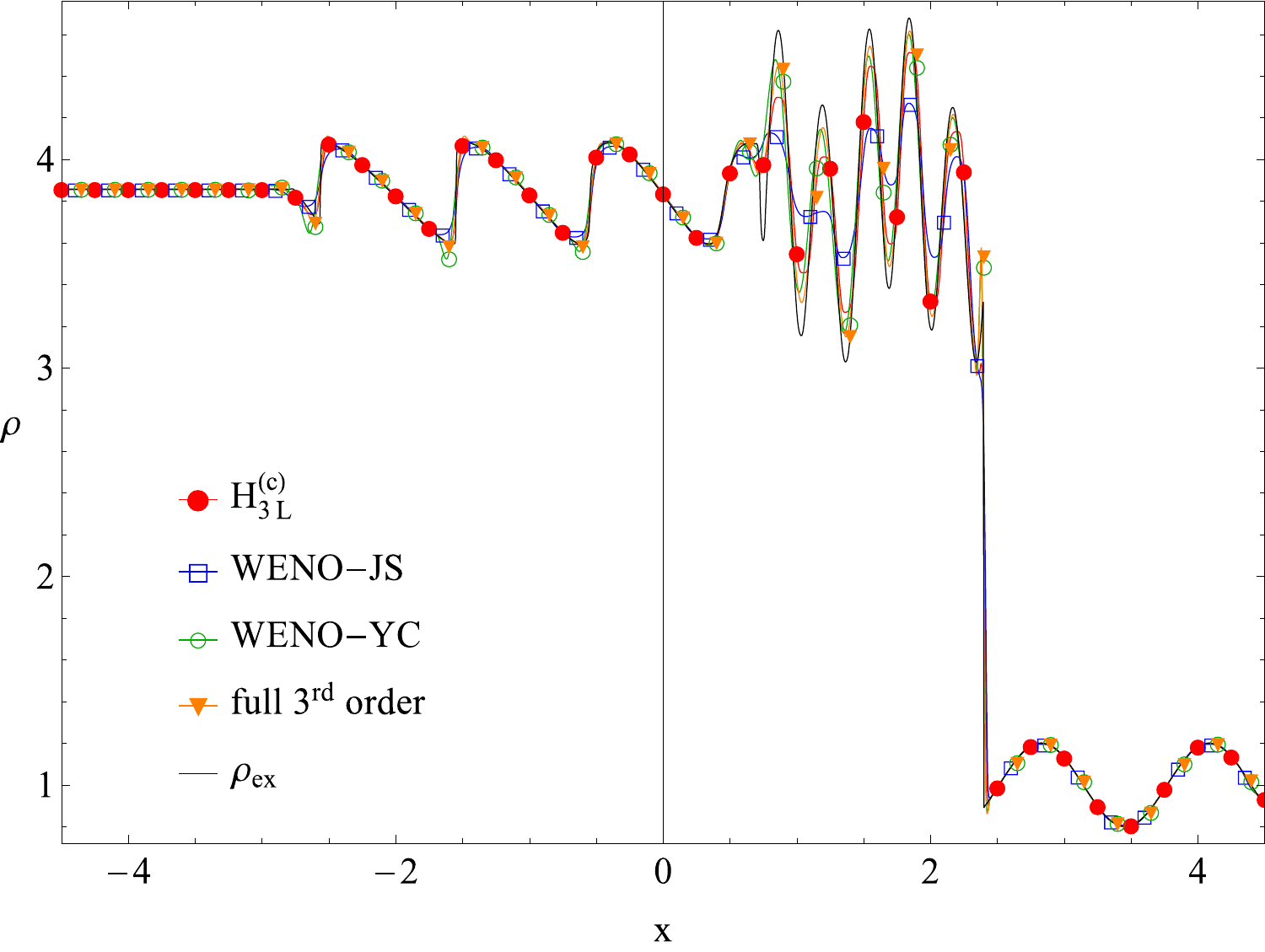}
	\caption{Solution of different reconstruction techniques for the Shu-Osher Problem, Eq.~\eqref{eq:ShuOsherPbIC}, \eqref{eq:EulerEq} with $N=640$ grid cells on the domain $[-4.5, 4.5]$, CFL $\nu=0.95$ at time $T_\text{end}=1.8$.}
	\label{fig:ShuOsherRho640}
\end{figure}
\begin{align}
	\begin{pmatrix}
	\rho_L \\
	v_L\\
	p_L
	\end{pmatrix}
	=
	\begin{pmatrix}
	3.857143\\2.629369\\10.33333
	\end{pmatrix},
	\quad 
	\begin{pmatrix}
	\rho_R \\
	v_R\\
	p_R
	\end{pmatrix}
	=
	\begin{pmatrix}
	1 + 0.2 \sin(5x), \\ 0.0\\1.0
	\end{pmatrix},
	\label{eq:ShuOsherPbIC}
\end{align}
and the time evolution is governed by the one-dimensional Euler equations, Eq.~\eqref{eq:EulerEq}. Just as in \cite{ShuOsher1989}, the solution is computed at $T_\text{end}=1.8$. We compare the solutions of the full $3^{\text{rd}}$-order reconstruction, WENO-JS, WENO-YC (where $\varepsilon=21.932$), and the new limiter function $H_{3\text{L}}^{(c)}$ with $N=640$ and $N=1280$ cells to a reference solution, which is the numerical solution of WENO-JS with $10,000$ grid cells. The CFL number is set to $\nu=0.95$ in all tests.

The combined limiter function $H_{3\text{L}}^{(c)}$ includes an asymptotic region, cf. \eqref{eq:eta}, with $\alpha=5.0$ the reconstruction techniques have all been applied to the primitive variables $\rho, u$ and $p$.\\
Fig.~\ref{fig:ShuOsherRho640} shows a comparison of the solutions obtained on $640$ grid cells of the full $3^{\text{rd}}$-order reconstruction, WENO-JS, WENO-YC, and the new limiter function $H_{3\text{L}}^{(c)}$. A zoom of the areas of interest of the solutions computed on $1280$ grid cells is shown in Fig.~\ref{fig:ShuOsherRho1280}. 
\begin{figure}[t]
   	\centering
%
   	\begin{subfigure}{0.49\textwidth}
   		\centering
   		\includegraphics[width=\textwidth]{./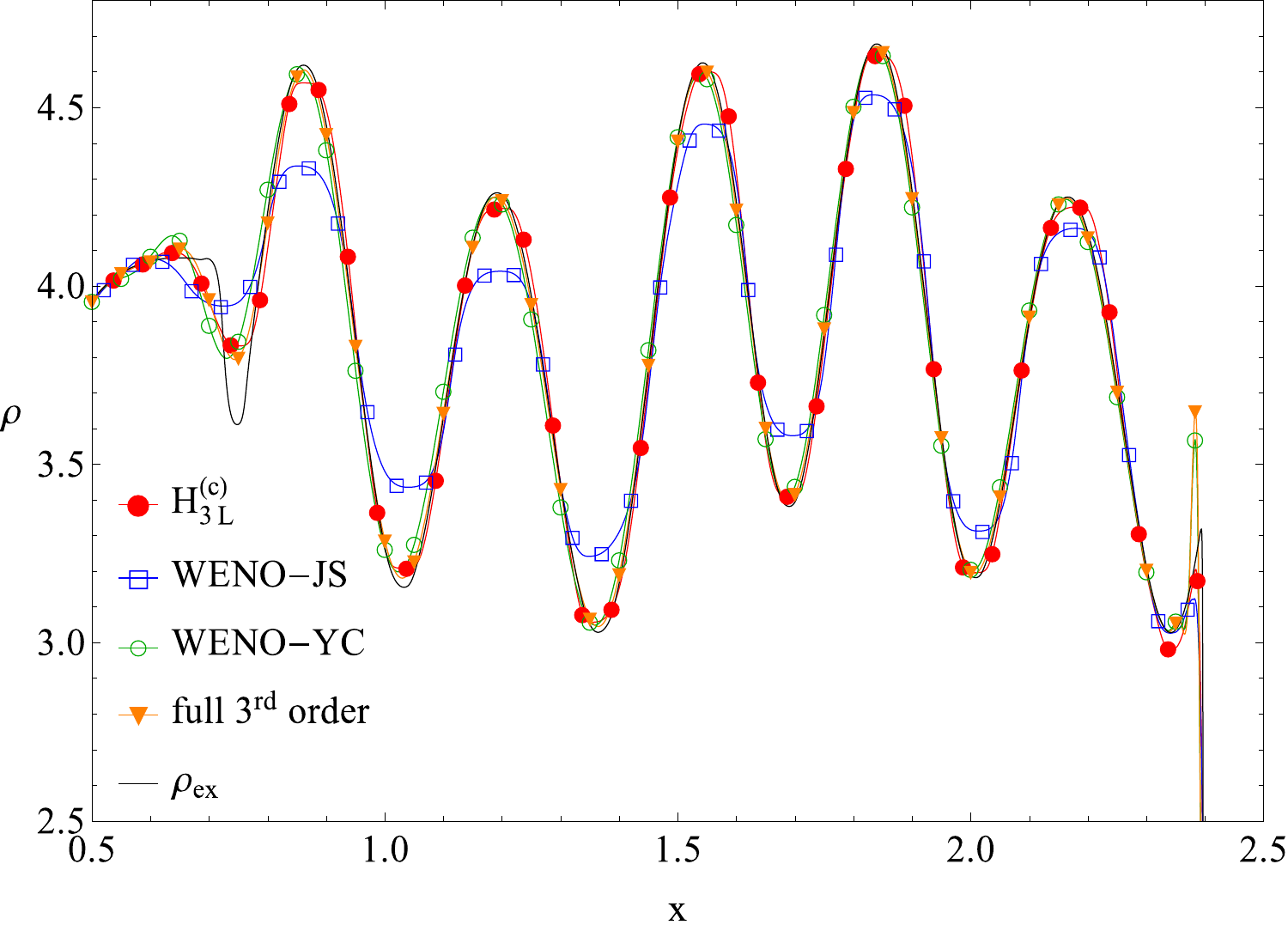}
   	\end{subfigure}
   	\hfill
   	\begin{subfigure}{0.49\textwidth}
   		\centering
   		\includegraphics[width=\textwidth]{./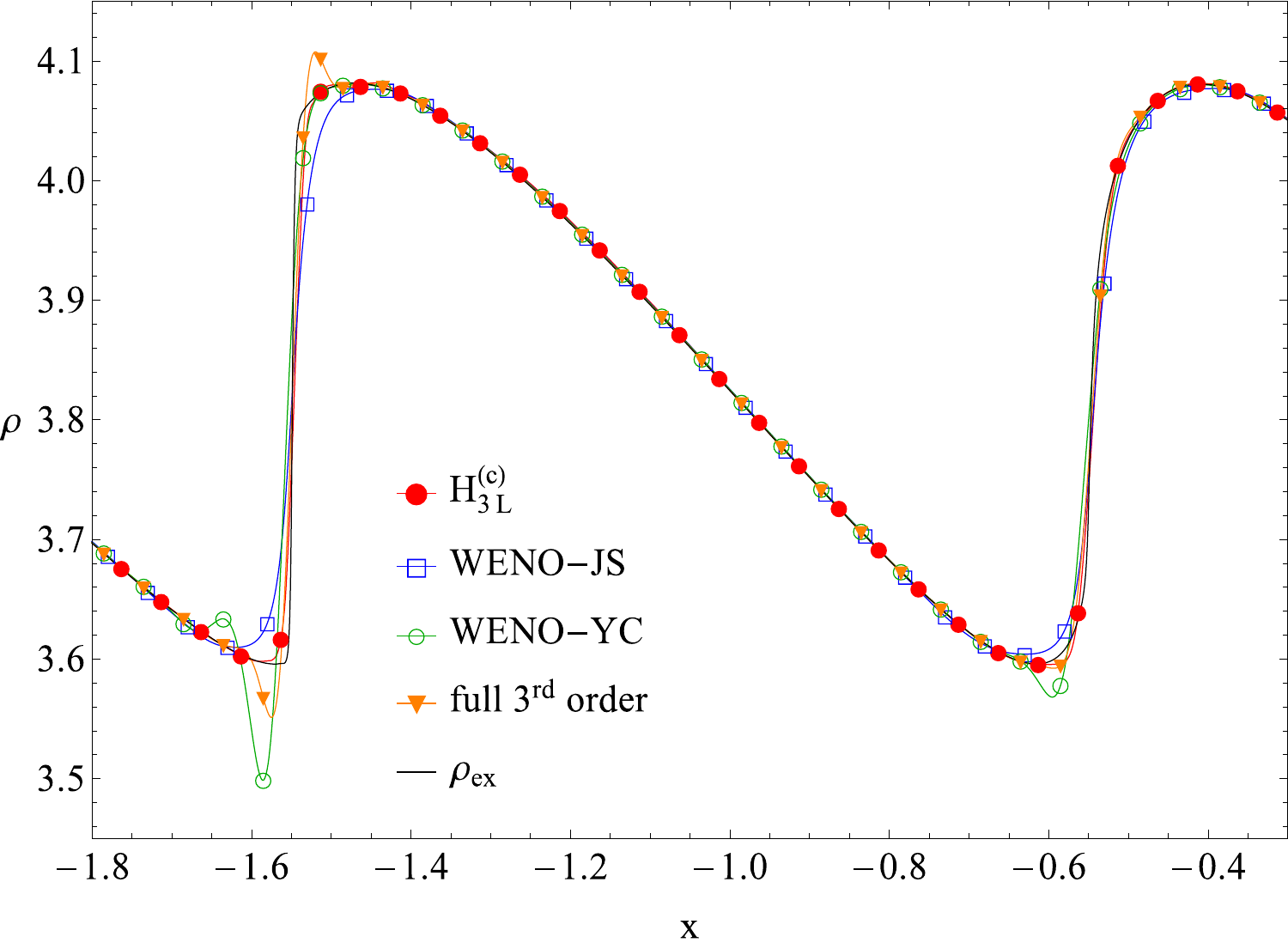}
   	\end{subfigure}
	\caption{Solution of different reconstruction techniques for the Shu-Osher Problem, Eq.~\eqref{eq:ShuOsherPbIC}, \eqref{eq:EulerEq}with $N=1280$ grid cells on the domain $[-4.5, 4.5]$, CFL $\nu=0.95$ at time $T_\text{end}=1.8$.}
   	\label{fig:ShuOsherRho1280}
\end{figure}
Overall, it can be seen that applying the full-$3^\text{rd}$ order reconstruction or WENO-YC leads to under- and overshoots close to discontinuities. WENO-JS does not produce overshoots, however, it limits too much, so that the reference solution is not approximated as well as by the limiter function $H_{3\text{L}}^{(c)}$. This is especially visible in regions with large gradients.
%
\section{Conclusions}\label{sec:conclusions}
%
In this paper we have analyzed $3^\text{rd}$-order finite volume and WENO schemes. These schemes reconstruct the solution at cell interfaces, each reconstruction based on only three mean values. We have then placed the different schemes in a unifying context. More specifically, we have analyzed and improved the FV limiter by {\v{C}}ada and Torrilhon \cite{CadaTorrilhon2009}. Our proposed FV limiter does not contain artificial parameters anymore. For a unifying view, we considered the $3^\text{rd}$-order WENO schemes proposed by Jiang and Shu \cite{JiangShu1996} and Yamaleev and Carpenter \cite{YamaleevCarpenter2009}. Plotting all schemes in the $(\delta_{i-\frac{1}{2}}, \delta_{i+\frac{1}{2}})$-plane revealed certain similarities which could also be observed in the numerical experiments. For smooth solutions, the FV limiter and WENO-YC show equally good results. However, near discontinuities, the numerical experiments showed some oscillations using the WENO-YC scheme, as predicted by Arandiga et al. \cite{ArandigaMarti2014}. This effect is avoided using our proposed FV limiter function, which reduces oscillations near discontinuities significantly.

\bibliographystyle{plain}
\bibliography{./references_complete}
\smallskip

\end{document}